\documentclass[11pt]{article}
\usepackage[a4paper]{geometry}
\usepackage{amssymb,latexsym,amsmath,amsfonts,amsthm}
\usepackage{graphicx,overpic}
\usepackage{dsfont}
\usepackage{mathrsfs}
\usepackage{enumerate}
\usepackage{epsfig}
\usepackage{epstopdf}
\usepackage{booktabs}
\usepackage{mathtools}
\usepackage{multirow}
\usepackage{pifont}
\usepackage{comment,verbatim}
\usepackage{hyperref}
\usepackage{bbm}
\usepackage[toc,page]{appendix}

\newtheorem{theorem}{Theorem}[section]
\newtheorem{lemma}[theorem]{Lemma}

\theoremstyle{definition}

\newtheorem{example}[theorem]{Example}

\theoremstyle{remark}

\newtheorem{remark}[theorem]{Remark}

\numberwithin{equation}{section}

\usepackage{color}

\usepackage[normalem]{ulem}

\begin{document}
\title{Hermite spectral approximation for functions with endpoint singularities using exponential transforms}
\author{Haiyong Wang\footnotemark[1]~\footnotemark[2]}
\date{}
\maketitle

\footnotetext[1]{School of Mathematics and Statistics, Huazhong
University of Science and Technology, Wuhan 430074, P. R. China.
E-mail: \texttt{haiyongwang@hust.edu.cn}}

\footnotetext[2]{Hubei Key Laboratory of Engineering Modeling and
Scientific Computing, Huazhong University of Science and Technology,
Wuhan 430074, P. R. China}

\begin{abstract}
In this paper we introduce Hermite spectral approximation for functions with endpoint singularities using exponential transforms, including single exponential (SE), double exponential (DE) and error function (EF) transforms, and present a comprehensive convergence analysis for these approximations without and with scaling. In the case without scaling, we show that these methods converge at some root-exponential rate. In the case with scaling, we derive optimal scaling factors for each of exponential transforms and show that the convergence rate of Hermite spectral approximation can be significantly improved. Numerical comparisons with sinc method are present and it is shown that Hermite method has comparable or superior accuracy performance when using the same number of terms. Extensions to quadrature and rootfinding algorithm are also discussed.
\end{abstract}

{\bf Keywords:} Hermite approximation, single exponential, double exponential, endpoint singularities, sinc method

\vspace{0.05in}

{\bf AMS subject classifications:} 41A05, 41A25, 65E10

\section{Introduction}\label{sec:introduction}
Functions with singularities arise in many problems such as partial differential equations (PDEs) in regions with corners, Volterra and Fredholm integral equations with singular kernels, and the Dirichlet problem for fractional Laplacian, to name a few. It is known that numerical methods based on polynomials, such as finite element and spectral methods, exhibit slow convergence behavior for such functions. For example, for $f(x)=x^{\alpha}$, where $x\in[0,1]$ and $\alpha>0$ is not an integer, its best and Chebyshev and Legendre polynomial approximations converge only at the rate $O(n^{-2\alpha})$ as $n\rightarrow\infty$ in the maximum norm \cite{Wang2021,Wang2023a}, where $n$ is the degree of these approximations, and therefore more accurate and efficient methods are highly desirable.

Resolution of singularities has received considerable attention in the past few decades and some efficient methods, such as rational approximation \cite{Gopal2019,Herremans2023,Newman1964,Stahl2003}, M\"{u}ntz approximation \cite{Borwein1994,Cui2025} and variable transform methods \cite{Adcock2014,Chen2022,Chen2023,Okayama2013,Stenger1993,Tanaka2009}, have been developed. More specifically, Newman proved in \cite{Newman1964} that rational approximation of type $(n,n)$, i.e., $r(x)=p(x)/q(x)$ with $p(x)$ and $q(x)$ being polynomials of degree $n$, for the function $f(x)=|x|$ on the interval $[-1,1]$ converges at least at the rate $O(\exp(-\sqrt{n}))$. Building upon Newman's result, Stahl in \cite{Stahl2003} proved that the optimal convergence rate of rational approximation of type $(n,n)$ for $f(x)=|x|^{\alpha}$, where $\alpha>0$ is not an even integer and $x\in[-1,1]$, is $O(\exp(-\pi\sqrt{\alpha n}))$. Note that rational approximation of type $(2n,2n)$ to $f(x)=|x|^{2\alpha}$ on $[-1,1]$ is equivalent to rational approximation of type $(n,n)$ to $f(x)=x^{\alpha}$ on $[0,1]$, and thus the optimal convergence rate of rational approximation of type $(n,n)$ to $f(x)=x^{\alpha}$ on $[0,1]$ is $O(\exp(-2\pi\sqrt{\alpha n}))$, which is clearly much faster than its polynomial counterpart. More recently, a rational approximation called ``lighting plus polynomial approximation'' was proposed in \cite{Herremans2023} for $f(x)=x^{\alpha}$ on $[0,1]$. By using preassigned poles which exponentially clustered near the singularities, this method can be efficiently implemented by solving a least-squares problem. In particular, when the poles are tapered exponentially clustered near the singularities, this method achieves the convergence rate of the best rational approximation.

On the other hand, variable transform method has also received much attention. The basic idea is to apply some bijective mappings to transform the underlying function to some new functions on unbounded domains with fast decay behaviors at infinity, and then apply some accurate approximations to the transplanted functions over the unbounded domains. Variable transform method includes the single exponential (SE) sinc method \cite{Stenger1993}, double exponential (DE) sinc method \cite{Okayama2013,Tanaka2009}, Chebyshev approximation on a truncated interval \cite{Adcock2014}, log orthogonal approximations \cite{Chen2022,Chen2023}. All these methods achieve root-exponential or faster convergence for functions with endpoint singularities.

In this work, we introduce Hermite spectral approximation for functions with endpoint singularities using exponential transforms and provide a comprehensive convergence analysis of such approximations without and with scaling. Three exponential transforms, including single-exponential (SE), double-exponential (DE) and error function (EF) transforms which are widely used in numerical integration and sinc method (see, e.g., \cite{Okayama2013,Stenger1993,Takahasi1973,Takahasi1974,Tanaka2009}), are considered. In the case without scaling, we show that these Hermite spectral approximations converge at some root-exponential rate, i.e., $O(\exp(-\nu\sqrt{n}))$ for some $\nu>0$ and $n$ is the degree of the approximation. In the case with scaling, we derive optimal scaling factor for SE transform based on the Hermite approximation result in \cite{Hille1940} and for DE and EF transforms based on the recent scaling optimized Hermite approximation result in \cite{Hu2026}. In particular, we show that the convergence rate of Hermite spectral approximation using DE transform can be improved to almost-exponential, i.e., $O(\exp(-\nu n/\log n))$ for some $\nu>0$. Numerical comparison of Hermite approximations using SE and DE transforms with their sinc counterparts is present, which shows that the former has comparable or superior accuracy performance than the latter when using the same number of terms.

The rest of this paper is organized as follows. In section \ref{sec:Herm}, we introduce Hermite spectral approximation on bounded domains using transforms and present a general convergence result. Three exponential transforms are considered and more precise convergence rates are present in section \ref{sec:ExpTrans}. We analyze Hermite spectral approximation with scaling in section \ref{sec:ScaledHerm} and develop their interpolation analog in section \ref{sec:Interp}. We present two extensions in section \ref{sec:Exten} and finish this work with some concluding remarks in section \ref{sec:conclusion}.

\section{Hermite spectral approximation for functions with endpoint singularities}\label{sec:Herm}
The Hermite functions $\psi_n(x)$ are defined by
\begin{equation}\label{def:HermFun}
\psi_n(x) = \frac{(-1)^n}{\sqrt{\gamma_{n}}} e^{x^2/2} \frac{d^n}{d x^n} e^{-x^2},
\end{equation}
where $n\in\mathbb{N}_0:=\{0,1,2,\ldots\}$ and $\gamma_{n}=2^n n!\sqrt{\pi}$. It is well known that $\{\psi_n\}_{n=0}^{\infty}$ 
form a complete orthonormal basis of the Hilbert space $L^2(\mathbb{R})$, i.e.,
\begin{align}
\int_{\mathbb{R}} \psi_j(x) \psi_k(x) dx = \delta_{j,k},
\end{align}
where $\delta_{j,k}$ is the Kronecker delta.

Let $I:=(a,b)$ be a finite interval, i.e., $-\infty<a<b<\infty$ and let $\phi$ be a bijective mapping from $(-\infty,\infty)$ to $(a,b)$ and satisfies $\phi(-\infty)=a$ and $\phi(\infty)=b$. Moreover, let $\varphi$ denote the inverse of $\phi$, i.e., $\varphi=\phi^{-1}$. For $n\in\mathbb{N}_0$, we introduce the following system
\begin{equation}\label{def:PsiTran}
\Psi_n(x) = (\psi_n\circ\varphi)(x), \quad x\in I.
\end{equation}
It is easy to verify that they satisfy the following properties:
\begin{itemize}
\item Orthonormality
\begin{equation}
\int_{a}^{b} \Psi_n(x) \Psi_m(x) \varphi'(x) dx = \delta_{n,m}. \nonumber
\end{equation}

\item Recurrence
\begin{equation}
\varphi(x) \Psi_{n}(x) = \sqrt{\frac{n}{2}} \Psi_{n-1}(x) + \sqrt{\frac{n+1}{2}} \Psi_{n+1}(x). \nonumber
\end{equation}

\item Derivative
\begin{equation}
\Psi'_{n}(x) = \varphi'(x) \left( \sqrt{\frac{n}{2}} \Psi_{n-1}(x) - \sqrt{\frac{n+1}{2}} \Psi_{n+1}(x) \right). \nonumber
\end{equation}

\item Bound
\begin{equation}
|\Psi_{n}(x)| \leq \pi^{-1/4}, \quad  x \in I. \nonumber
\end{equation}
\end{itemize}

Now we turn to the problem of approximating functions with endpoint singularities. Let $f$ be analytic on $I$ and continuous on $\overline{I}$ and let $\mathbb{M}_n=\mathrm{span}\{\Psi_k\}_{k=0}^{n}$. The orthogonal projection of $f$ onto the space $\mathbb{M}_n$ is given by
\begin{equation}\label{def:MapProj}
(\Pi_nf)(x) = \sum_{k=0}^{n} a_k \Psi_k(x), \quad  a_k = \int_{I} f(x) \Psi_k(x) \varphi'(x) dx.
\end{equation}
Let $\mathcal{S}_{\rho}$ denote the strip region of the form
\begin{align}
\mathcal{S}_{\rho} = \{z\in \mathbb{C}: ~ |\Im(z)|< \rho \}.
\end{align}
Below we establish a general convergence result for the approximation \eqref{def:MapProj}. Throughout the paper, we use the notation $\mathcal{K}$ to denote some generic positive constant which is independent of $n$ and may take different values in different places.
\begin{theorem}\label{thm:MapHermConv}
If $f$ satisfies the following conditions:
\begin{itemize}
\item[\rm(i)] $f$ is analytic on $\phi(\mathcal{S}_{\rho})$ for some $\rho>0$;

\item[\rm(ii)] $|f(x)|=O(\exp(-\tau|\varphi(x)|^{\kappa}))$ for some $\tau>0$ and $\kappa\geq1$ as $x\rightarrow{a}^{+}$ and $x\rightarrow{b}^{-}$.
\end{itemize}
Then, it holds
\begin{align}\label{eq:MapHermConv}
\|f - \Pi_nf \|_{L^{\infty}(I)} \leq \mathcal{K} \sqrt{n} \exp(-\nu\sqrt{n}),
\end{align}
where $\nu=\sqrt{2}\rho$ whenever $\kappa>1$ and $\nu=\sqrt{2}\min\{\tau,\rho\}$ whenever $\kappa=1$.
\end{theorem}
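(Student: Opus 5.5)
The plan is to reduce everything to Hermite approximation on the real line. Set $g=f\circ\phi$. Since $a_k=\int_I f\,\Psi_k\,\varphi'\,dx=\int_{\mathbb{R}} g(t)\psi_k(t)\,dt$, the $a_k$ are exactly the Hermite coefficients of $g$, and
\[
\|f-\Pi_nf\|_{L^\infty(I)}=\Big\|g-\sum_{k=0}^n a_k\psi_k\Big\|_{L^\infty(\mathbb{R})}\le \pi^{-1/4}\sum_{k>n}|a_k|,
\]
using the uniform bound $|\psi_k|\le\pi^{-1/4}$. Hypothesis (i) says that $g$ is analytic in the strip $\mathcal{S}_\rho$. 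Hypothesis (ii) says that $|g(t)|=O(\exp(-\tau|t|^\kappa))$ as $t\to\pm\infty$ on the real axis. The task is therefore to bound Hermite coefficients of a function analytic in a strip with prescribed decay, and then sum the tail.

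For the coefficient estimate I would use the classical Hille-type result \cite{Hille1940}: the Hermite series of $g$ converges in $\mathcal{S}_\rho$ essentially when $g$ is analytic there with suitable decay, and $|a_k|\lesssim \exp(-\sqrt{2}\,\sigma\sqrt{2k+1})$ for the appropriate width $\sigma$. Concretely, I would write $a_k$ via the Fourier representation $\psi_k=(\mathrm{i})^{-k}\,\widehat{\psi_k}$, i.e. $\psi_k$ is an eigenfunction of the Fourier transform. This gives $a_k=(-\mathrm{i})^k\int\hat g(\xi)\psi_k(\xi)\,d\xi$ up to normalization.

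1. Analyticity of $g$ in $\mathcal{S}_\rho$, together with integrability along horizontal lines, gives $|\hat g(\xi)|\lesssim e^{-\rho'|\xi|}$ for every $\rho'<\rho$ (Paley–Wiener/contour shift). To get integrability of $g$ on the lines $\Im t=\pm\rho'$ I would need decay of $g$ inside the strip, not only on the real axis. I would argue this via Phragmén–Lindelöf, or take it as part of the intended hypothesis, as in sinc theory.
2. Split $a_k$ into $\int_{|\xi|\le\sqrt{2k+1}}$ and the remainder. In the region $|\xi|\le\sqrt{2k+1}$ the function $\psi_k$ oscillates with amplitude about $k^{-1/4}$. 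Pairing $e^{-\rho|\xi|}$ with this oscillation via the Plancherel–Rotach asymptotics yields a factor $\exp(-\rho\sqrt{2k})$. A cleaner route is to shift the contour of the original integral $\int g\psi_k$ to $\Im t=\pm\rho'$ and use the growth of $\psi_k(x\pm\mathrm{i}\rho')$. Inside the oscillatory region that growth is $\lesssim e^{-\rho'\sqrt{2k}}$ relative to the oscillating phase once $\psi_k$ is split into its two exponential components $e^{\pm\mathrm{i}\theta_k(t)}$, each shifted in the decaying direction. Outside $|x|\le\sqrt{2k}$, $\psi_k$ decays like a Gaussian-type term. There I instead use the real-axis decay of $g$: for $\kappa>1$ the contribution is $O(\exp(-\tau(2k)^{\kappa/2}))$, negligible compared with $e^{-\rho\sqrt{2k}}$. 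For $\kappa=1$ it is about $e^{-\tau\sqrt{2k}}$, which explains $\min\{\tau,\rho\}$.

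So the expected bound is $|a_k|\le\mathcal{K}\exp(-\sqrt{2}\,\mu\sqrt{k})$, with $\mu=\rho$ for $\kappa>1$ and $\mu=\min\{\tau,\rho\}$ for $\kappa=1$; possibly a $\rho'<\rho$ arbitrary is needed, which I would absorb into the stated constant by a careful endpoint argument or accept as a minor loss. Summing the tail,
\[
\sum_{k>n}e^{-\nu\sqrt{k}}\approx\int_n^\infty e^{-\nu\sqrt{s}}\,ds=\frac{2}{\nu}\Big(\sqrt n+\frac1\nu\Big)e^{-\nu\sqrt n},
\]
which produces exactly the factor $\sqrt n\,e^{-\nu\sqrt n}$ in \eqref{eq:MapHermConv}.

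The main obstacle is step 2: obtaining the sharp rate $\sqrt{2}\rho\sqrt{k}$ for the coefficients uniformly. This requires controlling $\psi_k$ off the real axis in the oscillatory zone and matching it with the turning-point region. I would first try to import Hille's theorem directly, which characterizes the strip of convergence $|\Im z|<\rho$ via $\limsup_k (-\log|a_k|)/\sqrt{2k+1}\ge\rho$. I would then upgrade this to the explicit bound above using the decay hypothesis (ii) to handle the tails, as described.
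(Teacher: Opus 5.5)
Your reduction to $g=f\circ\phi$, the tail bound $\pi^{-1/4}\sum_{k>n}|a_k|$ and the integral comparison are exactly the paper's proof. The paper likewise just imports $|a_k|=O(e^{-\nu\sqrt k})$ from Hille and Boyd, so your step-2 sketch is not needed. Its Fourier-side half could not work on its own anyway: a size bound $|\hat g(\xi)|\lesssim e^{-\rho|\xi|}$ only gives $\int e^{-\rho|\xi|}|\psi_k(\xi)|\,d\xi=O(k^{-1/4})$. The two weak points you flag are genuine, and the paper passes over both, but neither of your proposed repairs works.

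\emph{Interior decay.} Hille's theorem needs, for each $\beta$ below the target width, $|g(x+\mathrm{i}y)|\le B(\beta)e^{-|x|\sqrt{\beta^2-y^2}}$ for all real $x$ and $|y|\le\beta$. Phragm\'en--Lindel\"of cannot extract this from (i)--(ii), because they give no bound on $g$ off $\mathbb{R}$. In fact the statement fails as written. Take $g(z)=\exp(-\cosh(\pi z/\rho))$: it is entire and decays double-exponentially on $\mathbb{R}$, so $f=g\circ\varphi$ satisfies (i)--(ii) for any $\kappa$, yet $|g(x\pm\mathrm{i}\rho/2)|=1$. By the necessity half of Hille's theorem, the Hermite series of $g$ converges in no strip $|\Im z|<\rho'$ with $\rho'>\rho/2$. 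Hence $|a_k|\ge e^{-(\rho/2+\epsilon)\sqrt{2k+1}}$ for infinitely many $k$. Since $|a_{n+1}|=|\langle g-\Pi_ng,\psi_{n+1}\rangle|\le\|\psi_{n+1}\|_{L^1}\|g-\Pi_ng\|_{L^\infty}\lesssim n^{1/4}\|g-\Pi_ng\|_{L^\infty}$, the claimed bound with $\nu=\sqrt2\rho$ is violated. So your second option is forced: decay must be assumed uniformly in the strip, e.g.\ $|g(x+\mathrm{i}y)|\lesssim e^{-\tau|x|^{\kappa}}$ for $|y|<\rho$. This matches sinc theory, where $|f(z)|\le\mathcal{K}|(z-a)^{\alpha}(b-z)^{\beta}|$ is imposed on all of $\phi(\mathcal{S}_\rho)$, and the paper tacitly uses it in the proof of Theorem~\ref{thm:ScaledDE}. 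With it, Hille's condition holds for all $\beta<\rho$ if $\kappa>1$, and for all $\beta<\min\{\tau,\rho\}$ if $\kappa=1$, because $\sqrt{\beta^2-y^2}\le\beta<\tau$.

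\emph{The exact width.} Hille's theorem controls $\liminf_k(-\log|a_k|)/\sqrt{2k+1}$, not the $\limsup$ you wrote. A $\limsup$ statement only makes a subsequence of coefficients small and cannot bound $\sum_{k>n}|a_k|$. Even stated correctly, it gives only $|a_k|\le C_\epsilon e^{-(\mu-\epsilon)\sqrt{2k+1}}$. The lost factor $e^{\sqrt2\epsilon\sqrt n}$ is unbounded, so it cannot be absorbed into $\mathcal{K}$. Reaching exactly $\nu=\sqrt2\mu$ needs a sharper coefficient estimate, such as the one quoted in Remark~\ref{rk:AlgFactor} under its extra growth and boundary-integrability conditions. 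The paper itself gets the exact rate only by appeal to Boyd's asymptotics. Otherwise, what the argument proves is the bound with $\nu=\sqrt2(\mu-\epsilon)$ for every $\epsilon>0$. One minor slip: Hille's rate is $e^{-\sigma\sqrt{2k+1}}$, not $e^{-\sqrt2\sigma\sqrt{2k+1}}$; your later $e^{-\sqrt2\mu\sqrt k}$ is the correct form.
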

\begin{proof}
Let $s=\varphi(x)$, where $s\in(-\infty,\infty)$, and let $F(s)=(f\circ\phi)(s)$. We have
\[
a_k = \int_{I} f(x) \Psi_k(x) \varphi'(x) dx = \int_{\mathbb{R}} F(s) \psi_k(s) ds,
\]
and therefore, the coefficient $a_k$ is exactly the $k$-th expansion coefficient of $F(s)$ in terms of $\{\psi_k(s)\}$. Moreover, by items (i) and (ii) we see that $F(s)$ is analytic in the strip $\mathcal{S}_{\rho}$ and $|F(s)|=O(\exp(-\tau|s|^\kappa))$ as $|s|\rightarrow\infty$ along the real axis. Hence, from \cite{Boyd1984,Hille1940} we deduce that $|a_k|=O(\exp(-\nu\sqrt{k}))$ and $\nu=\sqrt{2}\min\{\tau,\rho\}$ whenever $\kappa=1$ and $\nu=\sqrt{2}\rho$ whenever $\kappa>1$. Further, by the inequality $|\Psi_{n}(x)| \leq \pi^{-1/4}$ we have
\begin{align}
\|f - \Pi_nf\|_{L^{\infty}(I)} &\leq \frac{1}{\pi^{1/4}} \sum_{k=n+1}^{\infty} |a_k| \leq \mathcal{K} \sum_{k=n+1}^{\infty} e^{-\nu\sqrt{k}} \nonumber \\
&\leq \mathcal{K} \int_{n}^{\infty} e^{-\nu\sqrt{x}} dx
\leq \mathcal{K} \sqrt{n} e^{-\nu\sqrt{n}}. \nonumber
\end{align}
This ends the proof.
\end{proof}

\begin{remark}\label{rk:AlgFactor}
The algebraic factor $\sqrt{n}$ in \eqref{eq:MapHermConv} might be reduced by some more precise estimate of the coefficients $\{a_k\}$. Indeed, it was shown in \cite[Theorem~3.8]{Wang2026} that $|a_k|=O(k^{-1/4}e^{-\nu\sqrt{k}})$ with $\nu=\sqrt{2}\rho$ if $F(s)$ is analytic in the strip $\mathcal{S}_{\rho}$ and $|e^{z^2/2}F(z)|\leq \mathcal{K}|z|^{\sigma}$ for some $\sigma\in\mathbb{R}$ as $|z|\rightarrow\infty$ within the strip and $\int_{\partial\mathcal{S}_{\rho}}|e^{z^2/2}F(z) dz|<\infty$. In this case, the error bound in \eqref{eq:MapHermConv} can be improved to
\begin{align}
\|f - \Pi_nf \|_{L^{\infty}(I)} \leq \mathcal{K} \sum_{k=n+1}^{\infty} k^{-1/4} e^{-\nu\sqrt{k}} \leq \mathcal{K} n^{1/4} e^{-\nu\sqrt{n}}, \nonumber
\end{align}
where $\nu=\sqrt{2}\rho$.
\end{remark}

\begin{remark}\label{rk:EndNonZero}
The condition (ii) in Theorem \ref{thm:MapHermConv} implies that $f(a)=f(b)=0$. If either $f(a)\neq0$ or $f(b)\neq0$, then we can define an approximation of the form
\begin{align}
(\widetilde{\Pi}_n f)(x) := (\Pi_n h)(x) + p(x),
\end{align}
where $h=f-p$ and $p$ is a polynomial of degree one satisfying $p(a)=f(a)$ and $p(b)=f(b)$. If $h$ satisfies the conditions (i) and (ii) of Theorem \ref{thm:MapHermConv}, then we have
\begin{align}\label{eq:MapHermError2}
\|f - \widetilde{\Pi}_n f\|_{L^{\infty}(I)} &= \|h - \Pi_n h\|_{L^{\infty}(I)} \leq \mathcal{K} \sqrt{n} \exp(-\nu\sqrt{n}),
\end{align}
where $\nu$ is defined in \eqref{eq:MapHermConv}, and thus root-exponential convergence can still be achieved.
\end{remark}

\section{Three exponential transforms}\label{sec:ExpTrans}
In this section we consider three exponential transforms, including single exponential (SE), double exponential (DE) and error function (EF) transforms. Precise convergence rates of Hermite approximations based on these transforms will be given.

\subsection{Single exponential transform}
We first consider the single exponential (SE) transform and its inverse
\begin{align}\label{def:SE}
\phi_{\mathrm{SE}}(s) &= \frac{b-a}{2}\tanh\left(\frac{s}{2}\right) + \frac{b+a}{2},
\quad  \varphi_{\mathrm{SE}}(x) = \log\left(\frac{x-a}{b-x}\right),
\end{align}
and introduce the following SE-Hermite functions
\begin{equation}\label{def:SEHF}
\Psi_n^{\mathrm{SE}}(x) := (\psi_k\circ\varphi_{\mathrm{SE}})(x).
\end{equation}
The orthogonal projection in \eqref{def:MapProj} based on SE transform is given by
\begin{align}\label{def:SEProj}
(\Pi_n^{\mathrm{SE}}f)(x) &= \sum_{k=0}^{n} a_k^{\mathrm{SE}} \Psi_n^{\mathrm{SE}}(x), \quad  a_k^{\mathrm{SE}} = \int_{I} f(x) \Psi_n^{\mathrm{SE}}(x) \varphi'_{\mathrm{SE}}(x) dx.
\end{align}
The SE-Hermite system $\{\Psi_n^{\mathrm{SE}}\}$ in \eqref{def:SEHF} has been proposed in \cite{Boyd1986} for approximating functions with endpoint singularities and was recently used to develop spectral method in \cite{WangM2025} for solving singular integral equations. However, as far as we know, a rigorous analysis on the root-exponential convergence of the approximation \eqref{def:SEProj} is still lacking. Below we state a precise convergence result for \eqref{def:SEProj}.
\begin{theorem}\label{thm:SEConv}
If $f$ is analytic on $\phi_{\mathrm{SE}}(\mathcal{S}_{\rho})$ for some $\rho\in(0,\pi)$ and $|f(x)| \leq \mathcal{K} |(x-a)^{\alpha}(b-x)^{\beta}|$ for some $\alpha,\beta>0$ and $x\in I$. Then, it holds
\begin{align}\label{eq:SEConv}
\|f - \Pi_n^{\mathrm{SE}}f \|_{L^{\infty}(I)} \leq \mathcal{K} \sqrt{n} \exp(-\nu\sqrt{n}),
\end{align}
where $\nu=\sqrt{2}\min\{\tau,\rho\}$ and $\tau=\min\{\alpha,\beta\}$.
\end{theorem}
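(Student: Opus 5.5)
The plan is to reduce Theorem \ref{thm:SEConv} to Theorem \ref{thm:MapHermConv} with $\phi=\phi_{\mathrm{SE}}$ and $\kappa=1$. Condition (i) of Theorem \ref{thm:MapHermConv} is exactly the analyticity hypothesis. The only work is therefore to verify condition (ii) with $\tau=\min\{\alpha,\beta\}$, i.e.\ that $|f(x)|=O(\exp(-\tau|\varphi_{\mathrm{SE}}(x)|))$ as $x\to a^+$ and $x\to b^-$.

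For condition (ii), set $s=\varphi_{\mathrm{SE}}(x)$, so that $(x-a)/(b-x)=e^{s}$. Solving gives
\[
x-a=\frac{(b-a)e^{s}}{1+e^{s}}, \qquad b-x=\frac{b-a}{1+e^{s}}.
\]
Hence
\[
|(x-a)^{\alpha}(b-x)^{\beta}|=(b-a)^{\alpha+\beta}\frac{e^{\alpha s}}{(1+e^{s})^{\alpha+\beta}}.
\]
\begin{itemize}
\item As $s\to-\infty$, this quantity is $O(e^{-\alpha|s|})$.
\item As $s\to+\infty$, it is $O(e^{(\alpha-\alpha-\beta)s})=O(e^{-\beta|s|})$.
\end{itemize}
In both cases it is $O(e^{-\tau|s|})$ with $\tau=\min\{\alpha,\beta\}$. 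Thus $F(s)=f(\phi_{\mathrm{SE}}(s))$ satisfies $|F(s)|\le\mathcal{K}e^{-\tau|s|}$ on the real axis, which is condition (ii) with $\kappa=1$. Theorem \ref{thm:MapHermConv} then yields the bound \eqref{eq:SEConv} with $\nu=\sqrt{2}\min\{\tau,\rho\}$.

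The main subtlety is that the coefficient estimate quoted in the proof of Theorem \ref{thm:MapHermConv} (from \cite{Boyd1984,Hille1940}) really needs the decay of $F$ within the strip, uniformly in $|\Im s|\le\rho'<\rho$, not merely on the real line. If that is required, I would interpret the hypothesis $|f(x)|\le\mathcal{K}|(x-a)^{\alpha}(b-x)^{\beta}|$ as holding for $x\in\phi_{\mathrm{SE}}(\mathcal{S}_\rho)$, which the modulus signs suggest, and repeat the computation with complex $s=t+iy$. Since
\[
|1+e^{s}|\ge e^{t}\cos(y)\quad\text{or}\quad|1+e^{s}|\ge 1
\]
in the appropriate regimes, and $\cos y\ge\cos\rho'>0$ because $\rho<\pi$, we get $|1+e^{s}|\ge c\max\{1,e^{t}\}$ uniformly for $|y|\le\rho'<\pi/2$. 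For $\rho'$ up to $\pi$ one uses instead that $1+e^{s}$ has no zeros in $\mathcal{S}_\pi$ and behaves like $e^{s}$ or $1$ at the two ends. This gives $|F(t+iy)|\le\mathcal{K}e^{-\tau|t|}$ uniformly in the closed substrip. The restriction $\rho<\pi$ is exactly what keeps $\phi_{\mathrm{SE}}$ analytic, since $\tanh(s/2)$ has poles at $s=\pm i\pi$, and keeps $1+e^{s}$ zero-free.

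Finally I would apply the Hille-type result that an entire-strip analytic $F$ with exponential decay rate $\tau$ has Hermite coefficients $O(e^{-\sqrt{2}\min\{\tau,\rho\}\sqrt{k}})$. Letting $\rho'\uparrow\rho$ only affects constants (or one states the bound with $\rho'$ arbitrarily close to $\rho$). Summation over $k>n$ as in Theorem \ref{thm:MapHermConv} then finishes the proof.
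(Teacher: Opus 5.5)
Your proposal is correct and follows the paper's proof: it reduces to Theorem~\ref{thm:MapHermConv} with $\kappa=1$. Using $x-a=(b-a)e^{s}/(1+e^{s})$ and $b-x=(b-a)/(1+e^{s})$, you check that $|f|$ decays like $e^{-\alpha|s|}$ and $e^{-\beta|s|}$ at the two ends, which is exactly what the paper asserts more tersely. Your extra observation that Hille's coefficient estimate really needs decay uniform in closed substrips (hence a complex-domain form of the hypothesis) is a fair point the paper glosses over, but letting $\rho'\uparrow\rho$ can in general cost an arbitrarily small loss in the exponent rather than only a change of constants.
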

\begin{proof}
By the condition $|f(x)| \leq \mathcal{K} |(x-a)^{\alpha}(b-x)^{\beta}|$ for $x\in I$, we can deduce that $|f(x)|\leq \mathcal{K} \exp(-\alpha|\varphi_{\mathrm{SE}}(x)|)$ as $x\rightarrow{a}^{+}$ and $|f(x)|\leq \mathcal{K} \exp(-\beta|\varphi_{\mathrm{SE}}(x)|)$ as $x\rightarrow{b}^{-}$. The desired convergence result \eqref{eq:SEConv} then follows immediately from Theorem \ref{thm:MapHermConv} with $\kappa=1$ and $\tau=\min\{\alpha,\beta\}$. This ends the proof.
\end{proof}

Clearly, the condition in Theorem \ref{thm:SEConv} implies $f(a)=f(b)=0$. If either $f(a)$ or $f(b)$ are nonzero, then by Remark \ref{rk:EndNonZero} we define the following approximation
\begin{equation}
(\widetilde{\Pi}_n^{\mathrm{SE}}f)(x) = (\Pi_n^{\mathrm{SE}} (f-p))(x) + p(x),
\end{equation}
where $p$ is a polynomial of degree one satisfying $p(a)=f(a)$ and $p(b)=f(b)$. The convergence result is stated in the following theorem.
\begin{theorem}\label{thm:SEConv2}
If $f$ is analytic on $\phi_{\mathrm{SE}}(\mathcal{S}_{\rho})$ for some $\rho\in(0,\pi)$ and
$|f(x)-f(a)|=O((x-a)^{\alpha})$ as $x\rightarrow a^{+}$ and $|f(x)-f(b)|=O((b-x)^{\beta})$ as $x\rightarrow b^{-}$ for some $\alpha,\beta\in(0,1]$ and $x\in I$. Then, it holds
\begin{align}\label{eq:SEConv2}
\|f - \widetilde{\Pi}_n^{\mathrm{SE}}f \|_{L^{\infty}(I)} \leq \mathcal{K} \sqrt{n} \exp(-\nu\sqrt{n}),
\end{align}
where $\nu=\sqrt{2}\min\{\tau,\rho\}$ and $\tau=\min\{\alpha,\beta\}$.
\end{theorem}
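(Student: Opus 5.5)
The plan is to reduce Theorem \ref{thm:SEConv2} to Theorem \ref{thm:SEConv} applied to $h=f-p$, where $p(x)=f(a)\frac{b-x}{b-a}+f(b)\frac{x-a}{b-a}$ is the linear interpolant of $f$ at the endpoints. Since $\widetilde{\Pi}_n^{\mathrm{SE}}f-f=\Pi_n^{\mathrm{SE}}h-h$, as noted in Remark \ref{rk:EndNonZero}, it suffices to check that $h$ satisfies the hypotheses of Theorem \ref{thm:SEConv} with the same $\rho$, $\alpha$, $\beta$.

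\textbf{Analyticity.} This is immediate: $p$ is entire, so $h$ is analytic on $\phi_{\mathrm{SE}}(\mathcal{S}_{\rho})$ whenever $f$ is.

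\textbf{Bound $|h(x)|\le\mathcal{K}(x-a)^{\alpha}(b-x)^{\beta}$ on $I$.} I will split $I$ into three pieces. Near $a$, say on $(a,a+\delta]$, write
\[
h(x)=\bigl(f(x)-f(a)\bigr)-\bigl(p(x)-p(a)\bigr).
\]
The first term is $O((x-a)^{\alpha})$ by hypothesis. The second term equals $\frac{f(b)-f(a)}{b-a}(x-a)=O(x-a)$, which is $O((x-a)^{\alpha})$ because $\alpha\le 1$ and $x-a$ is bounded. This is exactly where the restriction $\alpha,\beta\in(0,1]$ is used. On this piece $(b-x)^{\beta}$ is bounded below by a positive constant, so $|h(x)|\le\mathcal{K}(x-a)^{\alpha}(b-x)^{\beta}$ there. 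The symmetric argument handles $[b-\delta,b)$. On the middle piece $[a+\delta,b-\delta]$, $h$ is continuous, hence bounded, and the weight $(x-a)^{\alpha}(b-x)^{\beta}$ is bounded below by a positive constant, so the bound again holds after enlarging $\mathcal{K}$.

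Continuity of $f$ on the middle piece follows from analyticity on $\phi_{\mathrm{SE}}(\mathcal{S}_{\rho})\supset I$. Continuity up to the endpoints is not needed, since the endpoint behaviour is already controlled by the $O$-hypotheses. Once the bound is established, Theorem \ref{thm:SEConv} gives \eqref{eq:SEConv2} with $\nu=\sqrt{2}\min\{\tau,\rho\}$ and $\tau=\min\{\alpha,\beta\}$.

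\textbf{Main obstacle.} There is little real difficulty here. The only delicate point is that subtracting $p$ introduces a linear term $O(x-a)$, which would spoil the rate if $\alpha>1$. The hypothesis $\alpha,\beta\le1$ ensures this term does not degrade the exponent, so the argument goes through as stated.
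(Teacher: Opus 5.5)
Your proposal is correct and matches the paper's proof: subtract the linear endpoint interpolant $p$, note that the correction $p(x)-p(a)$ is $O(x-a)\subseteq O((x-a)^{\alpha})$ because $\alpha\le 1$ (and similarly at $b$), and then apply Theorem \ref{thm:SEConv} to $f-p$. Your check that the bound $|f-p|\le\mathcal{K}(x-a)^{\alpha}(b-x)^{\beta}$ also holds on the middle subinterval $[a+\delta,b-\delta]$ is a step the paper leaves implicit, and it makes the reduction complete.
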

\begin{proof}
Clearly, $f-p$ is analytic on $\phi_{\mathrm{SE}}(\mathcal{S}_{\rho})$. Moreover, as $x\rightarrow a^{+}$, we have
\[
|f(x) - p(x)| \leq |f(x) - f(a)| + |f(a) - p(x)| = O((x-a)^{\alpha}),
\]
and, as $x\rightarrow b^{-}$,
\[
|f(x) - p(x)| \leq |f(x) - f(b)| + |f(b) - p(x)| = O((b-x)^{\beta}).
\]
and thus $f-p$ satisfies the conditions of Theorem \ref{thm:SEConv}. Note that
$$\|f - \widetilde{\Pi}_n^{\mathrm{SE}}f \|_{L^{\infty}(I)} = \|(f-p) - \Pi_n^{\mathrm{SE}} (f-p)\|_{L^{\infty}(I)},$$
the desired result \eqref{eq:SEConv2} follows immediately. This ends the proof.
\end{proof}

\subsection{Double exponential transform}
We consider the double exponential (DE) transform and its inverse
\begin{equation}\label{def:DE}
\phi_{\mathrm{DE}}(s) = \frac{b-a}{2}\tanh\left(\frac{\pi}{2}\sinh(s)\right) + \frac{b+a}{2}, \quad
\varphi_{\mathrm{DE}}(x) = \mathrm{arcsinh}\left( \frac{1}{\pi}\log\left(\frac{x-a}{b-x} \right)  \right),
\end{equation}
and introduce the following DE-Hermite functions
\begin{equation}\label{def:DEHF}
\Psi_n^{\mathrm{DE}}(x) := (\psi_k\circ\varphi_{\mathrm{DE}})(x).
\end{equation}
The orthogonal projection in \eqref{def:MapProj} based on DE transform is given by
\begin{align}\label{def:DEProj}
(\Pi_n^{\mathrm{DE}}f)(x) &= \sum_{k=0}^{n} a_k^{\mathrm{DE}} \Psi_n^{\mathrm{DE}}(x), \quad  a_k^{\mathrm{DE}} = \int_{I} f(x) \Psi_n^{\mathrm{DE}}(x) \varphi'_{\mathrm{DE}}(x) dx.
\end{align}
The convergence result is stated in the following theorem.
\begin{theorem}\label{thm:DEConv}
If $f$ is analytic on $\phi_{\mathrm{DE}}(\mathcal{S}_{\rho})$ for some $\rho\in(0,\pi/2)$ and $|f(x)| \leq \mathcal{K} |(x-a)^{\alpha}(b-x)^{\beta}|$ for some $\alpha,\beta>0$ and $x\in I$. Then, it holds
\begin{align}\label{eq:DEConv}
\|f - \Pi_n^{\mathrm{DE}}f \|_{L^{\infty}(I)} \leq \mathcal{K} \sqrt{n} \exp(-\nu\sqrt{n}),
\end{align}
where $\nu=\sqrt{2}\rho$.
\end{theorem}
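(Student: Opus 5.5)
We reduce Theorem \ref{thm:DEConv} to Theorem \ref{thm:MapHermConv} with $\phi=\phi_{\mathrm{DE}}$, in the same way that Theorem \ref{thm:SEConv} was reduced. Condition (i) of Theorem \ref{thm:MapHermConv} is exactly the analyticity hypothesis on $\phi_{\mathrm{DE}}(\mathcal{S}_{\rho})$. So only condition (ii) needs checking, and we must show it holds with some exponent $\kappa>1$. Once that is done, Theorem \ref{thm:MapHermConv} gives $\nu=\sqrt{2}\rho$, with no dependence on $\alpha,\beta$.

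For condition (ii), set $s=\varphi_{\mathrm{DE}}(x)$, so that $\log\frac{x-a}{b-x}=\pi\sinh(s)$.

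\emph{Near $a$.} As $x\to a^{+}$ we have $s\to-\infty$. Then
\[
\frac{x-a}{b-x}=e^{\pi\sinh s}\le e^{-\frac{\pi}{4}e^{|s|}}
\]
for $|s|$ large. Since $b-x\le b-a$, this gives $x-a\le (b-a)\,e^{-\frac{\pi}{4}e^{|s|}}$. The factor $(b-x)^{\beta}$ is bounded. Hence
\[
|f(x)|\le \mathcal{K}(x-a)^{\alpha}\le \mathcal{K}\exp\!\left(-\tfrac{\alpha\pi}{4}e^{|s|}\right).
\]

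\emph{Near $b$.} The symmetric argument gives $|f(x)|\le \mathcal{K}\exp(-\frac{\beta\pi}{4}e^{|s|})$ as $x\to b^{-}$.

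\emph{Choice of $\kappa$.} Take $\kappa=2$. Since $e^{t}\ge t^{2}/2$ for $t\ge0$, we get
\[
|f(x)|\le \mathcal{K}\exp(-\tau|\varphi_{\mathrm{DE}}(x)|^{2}), \qquad \tau=\tfrac{\pi}{8}\min\{\alpha,\beta\}.
\]
Thus condition (ii) holds with $\kappa=2>1$.

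It remains to invoke Theorem \ref{thm:MapHermConv}, which yields \eqref{eq:DEConv} with $\nu=\sqrt{2}\rho$. The restriction $\rho<\pi/2$ is there because $\phi_{\mathrm{DE}}$ maps $\mathcal{S}_{\rho}$ into a region on which the statement is meaningful: $\tanh(\frac{\pi}{2}\sinh z)$ has poles once $|\Im z|$ reaches $\pi/2$. This restriction plays no further role in the reduction.

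The only step needing care is the verification of (ii). Theorem \ref{thm:MapHermConv}, as proved via \cite{Boyd1984,Hille1940}, asks for decay of $F=f\circ\phi_{\mathrm{DE}}$ only along the real axis, and that is exactly what we have established. Should a strip-uniform decay bound turn out to be needed, we would first try the following. The bound $|f(z)|\le\mathcal{K}|(z-a)^{\alpha}(b-z)^{\beta}|$ holds on $\phi_{\mathrm{DE}}(\mathcal{S}_{\rho})$. For $|\Im z|<\rho<\pi/2$ we have $\Re\sinh z=\sinh(\Re z)\cos(\Im z)$ with $\cos(\Im z)\ge\cos\rho>0$. This gives double-exponential decay of $|F|$ uniformly in the strip, which is again dominated by $\exp(-\tau|z|^{2})$.
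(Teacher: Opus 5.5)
Your proposal is correct and is essentially the paper's proof. Both reduce to Theorem~\ref{thm:MapHermConv} by showing that the endpoint bound forces double-exponential decay $\exp(-c\,e^{|\varphi_{\mathrm{DE}}(x)|})$ along the real axis, so condition (ii) holds with some $\kappa>1$ and $\nu=\sqrt{2}\rho$ follows; the paper keeps the sharper constant $\alpha\pi/2$ and notes that $\kappa$ may be taken arbitrarily large, whereas you fix $\kappa=2$ via $e^{t}\ge t^{2}/2$, which works equally well (your closing fallback paragraph assumes the bound on $f$ off the real axis, which is not a hypothesis, but the main argument does not use it).
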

\begin{proof}
By the condition $|f(x)| \leq \mathcal{K} |(x-a)^{\alpha}(b-x)^{\beta}|$ for $x\in I$, it is easily verified that
\[
|f(x)| \leq \mathcal{K} \exp\left(-\frac{\alpha\pi}{2} \exp(|\varphi_{\mathrm{DE}}(x)|) \right),
\]
as $x\rightarrow a^{+}$, and
\[
|f(x)| \leq \mathcal{K} \exp\left(-\frac{\beta\pi}{2} \exp(|\varphi_{\mathrm{DE}}(x)|) \right),
\]
as $x\rightarrow b^{-}$. Therefore, $f(x)$ satisfies the item (ii) in Theorem \ref{thm:MapHermConv} with the exponent $\kappa>0$ being arbitrarily large. The desired convergence result \eqref{eq:DEConv} follows immediately from Theorem \ref{thm:MapHermConv}. This ends the proof.
\end{proof}

If either $f(a)$ or $f(b)$ are nonzero, then we define the following approximation
\begin{equation}
(\widetilde{\Pi}_n^{\mathrm{DE}}f)(x) = (\Pi_n^{\mathrm{DE}}(f-p))(x) + p(x),
\end{equation}
where $p$ is a polynomial of degree one satisfying $p(a)=f(a)$ and $p(b)=f(b)$. The convergence result is stated in the following theorem.
\begin{theorem}\label{thm:DEConv2}
If $f$ is analytic on $\phi_{\mathrm{DE}}(\mathcal{S}_{\rho})$ for some $\rho\in(0,\pi/2)$ and
$|f(x)-f(a)|=O((x-a)^{\alpha})$ as $x\rightarrow a^{+}$ and $|f(x)-f(b)|=O((b-x)^{\beta})$ as $x\rightarrow b^{-}$ for some $\alpha,\beta>0$ and $x\in I$. Then, it holds
\begin{align}\label{eq:DEConv2}
\|f - \widetilde{\Pi}_n^{\mathrm{DE}}f \|_{L^{\infty}(I)} \leq \mathcal{K} \sqrt{n} \exp(-\nu\sqrt{n}),
\end{align}
where $\nu=\sqrt{2}\rho$.
\end{theorem}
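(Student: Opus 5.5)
The plan mirrors the proof of Theorem \ref{thm:SEConv2}: reduce to Theorem \ref{thm:DEConv} applied to $h:=f-p$, where $p$ is the linear interpolant with $p(a)=f(a)$ and $p(b)=f(b)$. Since
$$\|f-\widetilde{\Pi}_n^{\mathrm{DE}}f\|_{L^{\infty}(I)}=\|h-\Pi_n^{\mathrm{DE}}h\|_{L^{\infty}(I)},$$
it suffices to verify that $h$ satisfies the hypotheses of Theorem \ref{thm:DEConv} with the same $\rho$.

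\textbf{Analyticity.} Analyticity of $h$ on $\phi_{\mathrm{DE}}(\mathcal{S}_{\rho})$ is immediate, because $p$ is entire and $f$ is analytic there by assumption.

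\textbf{Endpoint behaviour.} Near $a$ we write
\[
h(x)=(f(x)-f(a))+(f(a)-p(x)).
\]
The first term is $O((x-a)^{\alpha})$ by hypothesis. Because $p$ is linear with $p(a)=f(a)$, the second term is $O(x-a)$. Hence $h(x)=O((x-a)^{\min\{\alpha,1\}})$ as $x\to a^{+}$. Similarly, $h(x)=O((b-x)^{\min\{\beta,1\}})$ as $x\to b^{-}$.

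This differs slightly from the SE case. There $\alpha,\beta\le1$ was assumed, whereas here $\alpha,\beta>0$ are arbitrary. So we replace $\alpha,\beta$ by $\alpha'=\min\{\alpha,1\}$ and $\beta'=\min\{\beta,1\}$. This is harmless, because the rate $\nu=\sqrt{2}\rho$ in Theorem \ref{thm:DEConv} does not depend on $\alpha$ or $\beta$.

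\textbf{Global bound.} The one point needing care is upgrading these local estimates to the global bound
$$|h(x)|\le\mathcal{K}(x-a)^{\alpha'}(b-x)^{\beta'}\quad\text{on all of }I.$$
On neighbourhoods $(a,a+\delta]$ and $[b-\delta,b)$ the local bounds apply, since the missing factor $(b-x)^{\beta'}$ (respectively $(x-a)^{\alpha'}$) is bounded below there. On the compact middle piece $[a+\delta,b-\delta]$, $h$ is continuous and hence bounded, while $(x-a)^{\alpha'}(b-x)^{\beta'}$ is bounded below by a positive constant. Continuity of $f$ on $\overline{I}$ comes from the standing assumption of Section \ref{sec:Herm}, or else from analyticity on $I$ combined with the endpoint limits.

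With these bounds in hand, Theorem \ref{thm:DEConv} gives \eqref{eq:DEConv2} with $\nu=\sqrt{2}\rho$. No step is genuinely hard. The only subtlety is the $\min\{\cdot,1\}$ adjustment, which is the reason the restriction $\alpha,\beta\in(0,1]$ from Theorem \ref{thm:SEConv2} can be dropped.
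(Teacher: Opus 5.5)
Your proposal is correct and follows the paper's route: the paper only says the proof is similar to that of Theorem \ref{thm:SEConv2}, i.e., it applies Theorem \ref{thm:DEConv} to $f-p$. Replacing $\alpha,\beta$ by $\min\{\alpha,1\},\min\{\beta,1\}$ and upgrading the endpoint estimates to a global bound on $I$ are exactly the details the paper leaves implicit, and you handle both correctly.
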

\begin{proof}
It is similar to the proof of Theorem \ref{thm:SEConv2}. We omit the details.
\end{proof}

\subsection{Error function transform}
We consider the error function (EF) transform and its inverse
\begin{equation}\label{def:EF}
\phi_{\mathrm{EF}}(s) = \frac{b-a}{2}\mathrm{erf}(s) + \frac{b+a}{2}, \quad  \varphi_{\mathrm{EF}}(x) = \mathrm{erf}^{-1}\left( \frac{2x-a-b}{b-a} \right),
\end{equation}
where $\mathrm{erf}(z)$ is the error function \cite[Chapter~7]{Olver2010}. We introduce the following EF-Hermite functions
\begin{equation}\label{def:EFHF}
\Psi_n^{\mathrm{EF}}(x) := (\psi_k\circ\varphi_{\mathrm{EF}})(x) .
\end{equation}
The orthogonal projection in \eqref{def:MapProj} based on EF transform is given by
\begin{align}\label{def:EFProj}
(\Pi_n^{\mathrm{EF}}f)(x) &= \sum_{k=0}^{n} a_k^{\mathrm{EF}} \Psi_n^{\mathrm{EF}}(x), \quad  a_k^{\mathrm{EF}} = \int_{I} f(x) \Psi_n^{\mathrm{EF}}(x) \varphi_{\mathrm{EF}}'(x) dx.
\end{align}
In the sequel $\mathrm{erfc}(z)$ always denotes the complementary error function. The convergence result of \eqref{def:EFProj} is stated in the following theorem.
\begin{theorem}\label{thm:EFConv}
If $f$ is analytic on $\phi_{\mathrm{EF}}(\mathcal{S}_{\rho})$ for some $\rho\in(0,\rho^{*})$, where $\rho^{*}$ is the imaginary part of the root of $\mathrm{erfc}(z)$ that is nearest to the real axis, and $|f(x)|\leq\mathcal{K} |(x-a)^{\alpha}(b-x)^{\beta}|$ for some $\alpha,\beta>0$ and $x\in I$. Then, it holds
\begin{align}\label{eq:EFConv}
\|f - \Pi_n^{\mathrm{EF}}f \|_{L^{\infty}(I)} \leq \mathcal{K} \sqrt{n} \exp(-\nu\sqrt{n}),
\end{align}
where $\nu=\sqrt{2}\rho$.
\end{theorem}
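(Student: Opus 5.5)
The plan is to reduce the statement to Theorem \ref{thm:MapHermConv} with $\phi=\phi_{\mathrm{EF}}$, in the same way as Theorems \ref{thm:SEConv} and \ref{thm:DEConv}. Condition (i) of Theorem \ref{thm:MapHermConv} is exactly the analyticity hypothesis: $f$ is analytic on $\phi_{\mathrm{EF}}(\mathcal{S}_{\rho})$. Hence $F(s)=f(\phi_{\mathrm{EF}}(s))$ is analytic in $\mathcal{S}_{\rho}$. The restriction $\rho<\rho^{*}$ is there so that $\phi_{\mathrm{EF}}$ behaves well on the strip: $\mathrm{erf}(z)=\pm1$ means $\mathrm{erfc}(z)=0$ or $\mathrm{erfc}(-z)=0$, so the points where $\phi_{\mathrm{EF}}(z)$ hits the endpoints $a,b$ lie outside $\mathcal{S}_{\rho}$. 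Thus $\phi_{\mathrm{EF}}(\mathcal{S}_{\rho})$ is a legitimate region that does not wrap onto the singular endpoints. I will only remark on this, since analyticity of $F$ in the strip is all that is needed.

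The main step is condition (ii), namely the decay of $f$ in terms of $s=\varphi_{\mathrm{EF}}(x)$ as $x\to a^{+}$ or $x\to b^{-}$. Near $b$ we have $b-x=\frac{b-a}{2}\,\mathrm{erfc}(s)$ with $s\to+\infty$. The standard asymptotics $\mathrm{erfc}(s)\sim e^{-s^2}/(\sqrt{\pi}s)$, or just the elementary bound $\mathrm{erfc}(s)\le e^{-s^2}$ for $s\ge0$, give $b-x\le \mathcal{K}e^{-s^2}$. Since $x-a\le b-a$ is bounded, this yields
\[
|f(x)|\le \mathcal{K}(b-x)^{\beta}\le \mathcal{K}\exp(-\beta|\varphi_{\mathrm{EF}}(x)|^{2}).
\]
Symmetrically, $x-a=\frac{b-a}{2}\,\mathrm{erfc}(-s)$ with $s\to-\infty$ gives $|f(x)|\le\mathcal{K}\exp(-\alpha|\varphi_{\mathrm{EF}}(x)|^2)$ as $x\to a^{+}$. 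So (ii) holds with $\kappa=2>1$ and $\tau=\min\{\alpha,\beta\}$.

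Theorem \ref{thm:MapHermConv} with $\kappa>1$ then gives \eqref{eq:EFConv} with $\nu=\sqrt{2}\rho$, independent of $\tau$. The only point needing some care is the erfc bound; everything else is bookkeeping. If one wants to be careful that the cited Hermite coefficient estimates need decay of $F$ inside the strip, not just on the real axis, one can check that $|\mathrm{erfc}(s+it)|\le\mathcal{K}e^{-s^2+t^2}$ for $|t|<\rho$. This bound keeps the Gaussian-type decay uniformly in the strip, but I expect the paper's Theorem \ref{thm:MapHermConv} to require only the real-axis condition as stated.
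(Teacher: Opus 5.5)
Your proposal is correct and follows the paper's proof: the paper also reduces to Theorem \ref{thm:MapHermConv} by using the tail asymptotics of the error function to get $|f(x)|\le\mathcal{K}\exp(-\alpha|\varphi_{\mathrm{EF}}(x)|^2)$ near $a$ and $|f(x)|\le\mathcal{K}\exp(-\beta|\varphi_{\mathrm{EF}}(x)|^2)$ near $b$, which is condition (ii) with $\kappa=2$ and hence gives $\nu=\sqrt{2}\rho$. Your identities $b-x=\frac{b-a}{2}\mathrm{erfc}(s)$ and $x-a=\frac{b-a}{2}\mathrm{erfc}(-s)$, together with the bound $\mathrm{erfc}(s)\le e^{-s^2}$ for $s\ge 0$, fill in the step the paper only calls ``easily verified.''
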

\begin{proof}
By the asymptotics of the error function
\begin{align}
\mathrm{erf}(x) \sim \left\{
            \begin{array}{ll}
{\displaystyle -1 - \frac{e^{-x^2}}{\sqrt{\pi}x} (1 + O(x^{-2}))},   & \hbox{$x\rightarrow-\infty$,}    \\[3ex]
{\displaystyle  1 - \frac{e^{-x^2}}{\sqrt{\pi}x} (1 + O(x^{-2}))},   & \hbox{$x\rightarrow \infty$,}
            \end{array}
            \right.  \nonumber
\end{align}
and the condition $|f(x)|\leq\mathcal{K} |(x-a)^{\alpha}(b-x)^{\beta}|$ for $x\in I$,
it is easily verified that $|f(x)| \leq \mathcal{K}\exp(-\alpha|\varphi_{\mathrm{EF}}(x)|^2)$ as $x\rightarrow a^{+}$, and $|f(x)|\leq\mathcal{K}\exp(-\beta|\varphi_{\mathrm{EF}}(x)|^2)$ as $x\rightarrow b^{-}$. Hence, the desired convergence result \eqref{eq:EFConv} follows from Theorem \ref{thm:MapHermConv} with $\kappa=2$. This ends the proof.
\end{proof}

\begin{remark}
From \cite[Table~7.13.2]{Olver2010} we know that the roots of $\mathrm{erfc}(z)$ which are nearest to the real axis are $\{z,\overline{z}\}$, where $z\approx-1.354810128 + 1.991466843\mathrm{i}$ and $\mathrm{i}$ is the imaginary unit. Thus, $\rho^{*}\approx1.991466843$ in the above Theorem.
\end{remark}

If either $f(a)$ or $f(b)$ are nonzero, then we can define the following approximation
\begin{equation}
(\widetilde{\Pi}_n^{\mathrm{EF}}f)(x) = (\Pi_n^{\mathrm{EF}} (f-p))(x) + p(x),
\end{equation}
where $p$ is a polynomial of degree one satisfying $p(a)=f(a)$ and $p(b)=f(b)$. The convergence result is stated in the following theorem.
\begin{theorem}\label{thm:EFConv2}
If $f$ is analytic on $\phi_{\mathrm{EF}}(\mathcal{S}_{\rho})$ for some $\rho\in(0,\rho^{*})$, and
$|f(x)-f(a)|=O((x-a)^{\alpha})$ as $x\rightarrow a^{+}$ and $|f(x)-f(b)|=O((b-x)^{\beta})$ as $x\rightarrow b^{-}$ for some $\alpha,\beta>0$ and $x\in I$. Then, it holds
\begin{align}\label{eq:EFConv2}
\|f - \widetilde{\Pi}_n^{\mathrm{EF}}f \|_{L^{\infty}(I)} \leq \mathcal{K} \sqrt{n} \exp(-\nu\sqrt{n}),
\end{align}
where $\nu=\sqrt{2}\rho$.
\end{theorem}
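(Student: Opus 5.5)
We mirror the proof of Theorem \ref{thm:SEConv2}: apply Theorem \ref{thm:EFConv} to $h:=f-p$, where $p$ is the linear interpolant of $f$ at $a$ and $b$. The key identity is
\[
\|f - \widetilde{\Pi}_n^{\mathrm{EF}}f\|_{L^{\infty}(I)} = \|h - \Pi_n^{\mathrm{EF}} h\|_{L^{\infty}(I)},
\]
which holds because $\widetilde{\Pi}_n^{\mathrm{EF}}f = \Pi_n^{\mathrm{EF}}h + p$. It therefore suffices to check that $h$ satisfies the hypotheses of Theorem \ref{thm:EFConv} with the same $\rho$.

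First, analyticity. $p$ is a polynomial, hence entire, so $h$ is analytic on $\phi_{\mathrm{EF}}(\mathcal{S}_{\rho})$ whenever $f$ is, with $\rho\in(0,\rho^{*})$ unchanged.

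Second, endpoint decay. As $x\to a^{+}$, $|p(x)-f(a)| = |p(x)-p(a)| = O(x-a)$. Together with the hypothesis this gives
\[
|h(x)| \le |f(x)-f(a)| + |f(a)-p(x)| = O\big((x-a)^{\min\{\alpha,1\}}\big).
\]
Similarly $|h(x)| = O\big((b-x)^{\min\{\beta,1\}}\big)$ as $x\to b^{-}$.

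The only point needing slightly more care is that Theorem \ref{thm:EFConv} asks for a global bound $|h(x)|\le \mathcal{K}(x-a)^{\alpha'}(b-x)^{\beta'}$ on all of $I$, with $\alpha'=\min\{\alpha,1\}$ and $\beta'=\min\{\beta,1\}$, not just the two local bounds. We obtain it as follows. Near $a$, the factor $(b-x)^{\beta'}$ is bounded below by a positive constant, so the local estimate at $a$ implies the global form there; the same argument works near $b$. On any compact subinterval of $I$, $h$ is continuous (indeed $f$ is continuous on $\overline{I}$), so it is bounded, while $(x-a)^{\alpha'}(b-x)^{\beta'}$ is bounded away from zero. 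Patching these three regions gives the global bound with a suitable constant $\mathcal{K}$.

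Alternatively, the proof of Theorem \ref{thm:EFConv} only uses the estimates near the endpoints. There the decay $|h(x)|\le\mathcal{K}\exp(-\alpha'|\varphi_{\mathrm{EF}}(x)|^2)$ follows from the erf asymptotics: $x-a \asymp e^{-s^2}/|s|$ with $s=\varphi_{\mathrm{EF}}(x)$. One then invokes Theorem \ref{thm:MapHermConv} with $\kappa=2$.

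Applying Theorem \ref{thm:EFConv} to $h$ gives the bound $\mathcal{K}\sqrt{n}\exp(-\sqrt{2}\rho\sqrt{n})$. Since $\kappa=2>1$, the rate $\nu=\sqrt{2}\rho$ does not depend on $\alpha'$ or $\beta'$, so truncating the exponents at $1$ costs nothing. This yields \eqref{eq:EFConv2}.
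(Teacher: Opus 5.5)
Your proposal is correct and follows the paper's route. The paper's proof simply defers to that of Theorem~\ref{thm:SEConv2}: subtract the linear interpolant $p$, check that $f-p$ satisfies the hypotheses of Theorem~\ref{thm:EFConv}, and use $\|f-\widetilde{\Pi}_n^{\mathrm{EF}}f\|_{L^{\infty}(I)}=\|(f-p)-\Pi_n^{\mathrm{EF}}(f-p)\|_{L^{\infty}(I)}$. Two of your details go beyond what the paper states, and both are sound: replacing the exponents by $\min\{\alpha,1\}$ and $\min\{\beta,1\}$ (needed because, unlike in Theorem~\ref{thm:SEConv2}, $\alpha,\beta$ may exceed $1$), and patching the local endpoint bounds into the global weight bound; they cost nothing since $\nu=\sqrt{2}\rho$ does not depend on the exponents.
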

\begin{proof}
It is similar to the proof of Theorem \ref{thm:SEConv2}. We omit the details.
\end{proof}

\subsection{Examples}
We present examples to illustrate the root-exponential convergence rates of $\Pi_n^{\mathrm{SE}}f$, $\Pi_n^{\mathrm{DE}}f$ and $\Pi_n^{\mathrm{EF}}f$. We choose $I=(0,1)$ and all computations are performed using the Advanpix Multiprecision Computing Toolbox with quadruple precision.

\begin{example}
We consider $f(x)=x^{\alpha}(1-x)^{\beta}$ with $\alpha,\beta>0$. Clearly, this function satisfies the conditions of Theorems \ref{thm:SEConv}, \ref{thm:DEConv} and \ref{thm:EFConv} and corresponds to $\rho=\pi-\epsilon$, $\rho=\pi/2-\epsilon$ and $\rho=\rho^{*}-\epsilon$, respectively, with $\epsilon>0$ being arbitrarily close to zero. In Figure \ref{fig:Exam1} we plot their maximum errors as a function of $\sqrt{n}$ for $(\alpha,\beta)=(\sqrt{2}/5,\sqrt{2}/5)$ and $(\alpha,\beta)=(1/3,1/2)$. Moreover, we also plot the predicted convergence rates, i.e., $O(n^{\kappa}\exp(-\nu\sqrt{n}))$ with $\kappa=0$ and $\nu=\sqrt{2}\min\{\pi,\alpha,\beta\}$ for $\Pi_n^{\mathrm{SE}}f$, and $\kappa=0$ and $\nu=\sqrt{2}\pi/2$ for $\Pi_n^{\mathrm{DE}}f$, and $\kappa=-3/4$ and $\nu=\sqrt{2}\rho^{*}$ for $\Pi_n^{\mathrm{EF}}f$. We see that, up to some small algebraic factors, the convergence rates of $\Pi_n^{\mathrm{SE}}f$, $\Pi_n^{\mathrm{DE}}f$ and $\Pi_n^{\mathrm{EF}}$ are all consistent with the predicted rates.
\end{example}

\begin{figure}[htbp]
\centering
\includegraphics[width=0.49\textwidth,height=0.42\textwidth]{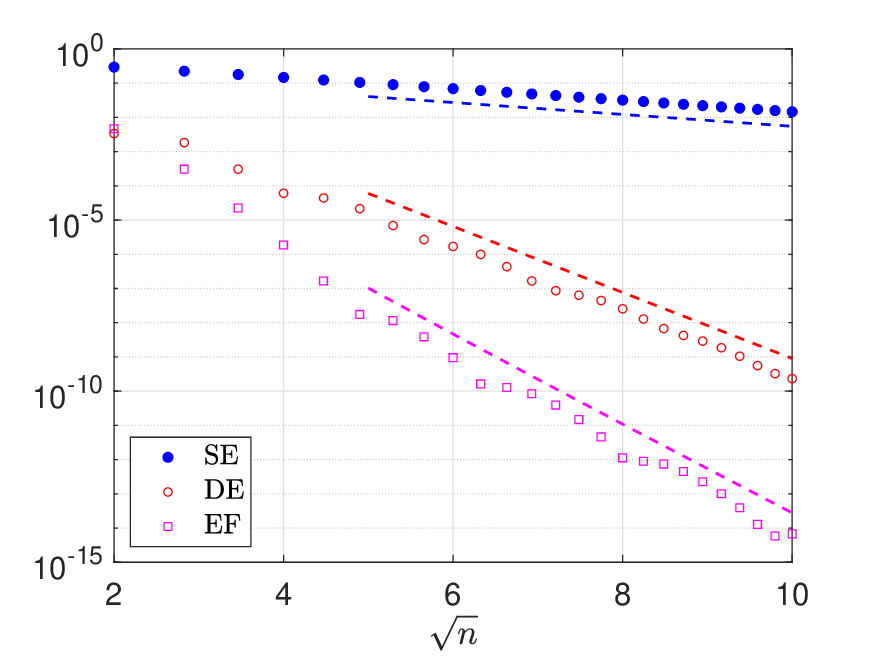}
\includegraphics[width=0.49\textwidth,height=0.42\textwidth]{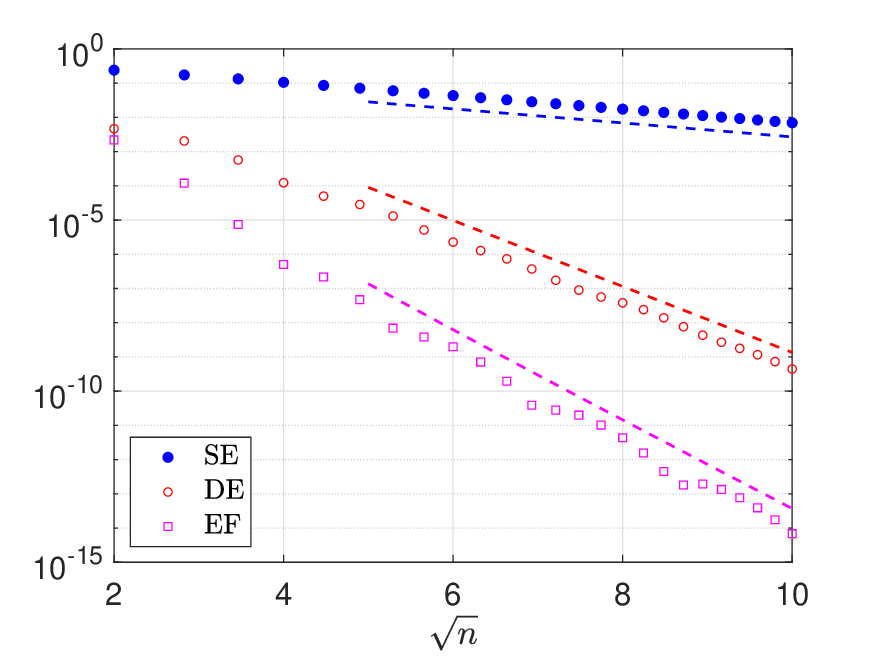}
\caption{Maximum errors of $\Pi_n^{\mathrm{SE}}f$ (dots), $\Pi_n^{\mathrm{DE}}f$ (circles) and $\Pi_n^{\mathrm{EF}}$ (boxes) as a function of $\sqrt{n}$ for $f(x) = x^{\alpha}(1-x)^{\beta}$. Here $(\alpha,\beta)=(\sqrt{2}/5,\sqrt{2}/5)$ (left) and $(\alpha,\beta)=(1/3,1/2)$ (right). The dashed lines are $O(n^{\kappa}\exp(-\nu\sqrt{n}))$ with $\kappa=0$ and $\nu=\sqrt{2}\min\{\pi,\alpha,\beta\}$ for $\Pi_n^{\mathrm{SE}}f$, and $\kappa=0$ and $\nu=\sqrt{2}\pi/2$ for $\Pi_n^{\mathrm{DE}}f$, and $\kappa=-3/4$ and $\nu=\sqrt{2}\rho^{*}$ for $\Pi_n^{\mathrm{EF}}f$. }\label{fig:Exam1}
\end{figure}

\begin{example}
We consider $f(x)=x^{\alpha}\log^{\beta}(x)$ with $\alpha>0$ and $\beta\in\mathbb{N}$. This function, which has an algebraic-logarithmic singularity at $x=0$, also satisfies the conditions of Theorems \ref{thm:SEConv}, \ref{thm:DEConv} and \ref{thm:EFConv} and corresponds to $\rho=\pi-\epsilon$, $\rho=\pi/2-\epsilon$ and $\rho=\rho^{*}-\epsilon$, respectively. In Figure \ref{fig:Exam2} we plot the maximum errors of $\Pi_n^{\mathrm{SE}}f$, $\Pi_n^{\mathrm{DE}}f$ and $\Pi_n^{\mathrm{EF}}$ as a function of $\sqrt{n}$ for $(\alpha,\beta)=(1/2,1)$ and $(\alpha,\beta)=(1,1)$. Moreover, we also plot the predicted convergence rates $O(n^{\kappa}\exp(-\nu\sqrt{n}))$ with $\kappa=1/2$ and $\nu=\sqrt{2}\min\{\pi,\alpha,\beta\}$ for $\Pi_n^{\mathrm{SE}}f$, and $\kappa=2/3$ and $\nu=\sqrt{2}\pi/2$ for $\Pi_n^{\mathrm{DE}}f$, and $\kappa=-3/4$ for $(\alpha,\beta)=(1/2,1)$ and $\kappa=-1$ for $(\alpha,\beta)=(1,1)$ and $\nu=\sqrt{2}\rho^{*}$ for $\Pi_n^{\mathrm{EF}}f$. We see again that, up to some small algebraic factors, the convergence rates of $\Pi_n^{\mathrm{SE}}f$, $\Pi_n^{\mathrm{DE}}f$ and $\Pi_n^{\mathrm{EF}}$ are all consistent with the predicted rates.
\end{example}

\begin{figure}[htbp]
\centering
\includegraphics[width=0.49\textwidth,height=0.42\textwidth]{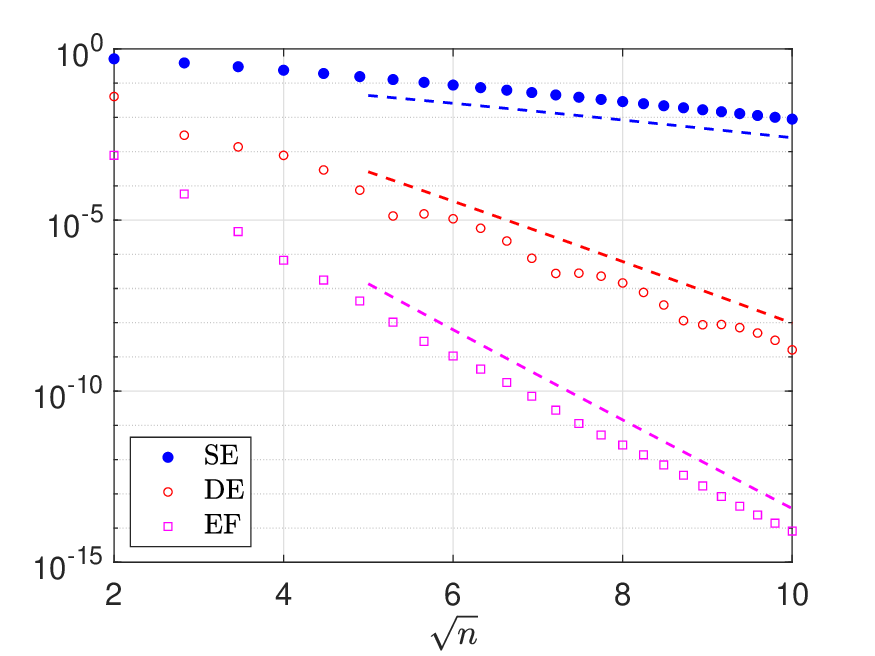}
\includegraphics[width=0.49\textwidth,height=0.42\textwidth]{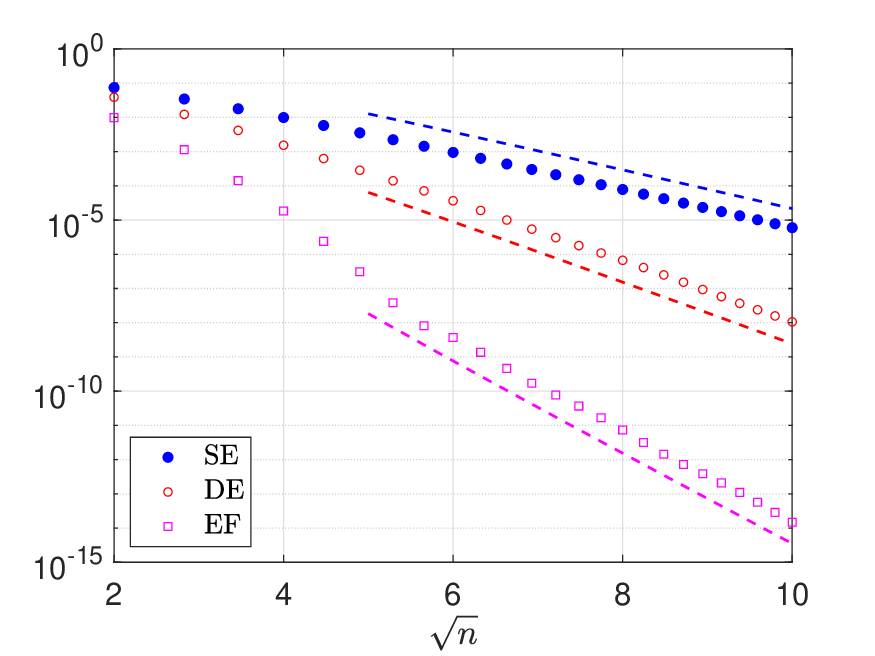}
\caption{Maximum error of $\Pi_n^{\mathrm{SE}}f$ (dots), $\Pi_n^{\mathrm{DE}}f$ (circles) and $\Pi_n^{\mathrm{EF}}$ (boxes) as a function of $\sqrt{n}$ for $f(x) = x^{\alpha}\log^{\beta}(x)$. Here $(\alpha,\beta)=(1/2,1)$ (left) and $(\alpha,\beta)=(1,1)$ (right). The dashed lines are $O(n^{\kappa}\exp(-\nu\sqrt{n}))$ with $\kappa=1/2$ and $\nu=\sqrt{2}\min\{\pi,\alpha,\beta\}$ for $\Pi_n^{\mathrm{SE}}f$, and $\kappa=2/3$ and $\nu=\sqrt{2}\pi/2$ for $\Pi_n^{\mathrm{DE}}f$, and $\kappa=-3/4$ for $(\alpha,\beta)=(1/2,1)$ and $\kappa=-1$ for $(\alpha,\beta)=(1,1)$ and $\nu=\sqrt{2}\rho^{*}$ for $\Pi_n^{\mathrm{EF}}f$.}\label{fig:Exam2}
\end{figure}

\section{Hermite spectral approximation with scalings}\label{sec:ScaledHerm}
In the previous section we have established the root-exponential convergence of Hermite spectral approximation for functions with endpoint singularities without the use of scaling. In this section we consider these approximations with scalings and show that the convergence rate can be significantly improved when suitable scaling factors are adopted.

We define the functions in \eqref{def:PsiTran} with a scaling factor as
\begin{equation}\label{def:ScaledHermFun}
\Psi_{k,\lambda}(x) := (\psi_k\circ \lambda\varphi)(x), \quad  x\in I, 
\end{equation}
where $\lambda>0$ is a scaling factor and let $\mathbb{M}_{n}^{\lambda}=\mathrm{span}\{\Psi_{k,\lambda}\}_{k=0}^{n}$. The orthogonal projection of $f$ onto the space $\mathbb{M}_{n}^{\lambda}$ is given by
\begin{align}\label{def:ScaledHermProj}
(\Pi_{n,\lambda}f)(x) = \sum_{k=0}^{n} a_{k,\lambda} \Psi_{k,\lambda}(x), \quad  a_{k,\lambda} = \lambda \int_{I} f(x) \Psi_{k,\lambda}(x) \varphi'(x) dx.
\end{align}
In the following two subsections we shall consider the choice of scaling factors for each exponential transform.

\subsection{Optimal scaling of SE-Hermite approximation}
Let $\mathbb{H}_{n}^{\lambda}=\mathrm{span}\{\psi_k(\lambda x)\}_{k=0}^{n}$ and let $\Pi_{n,\lambda}^{\mathrm{SH}}$ denote the orthogonal projection operator from $L^2(\mathbb{R})$ onto the space $\mathbb{H}_{n}^{\lambda}$. Then, for any $u\in L^2(\mathbb{R})$,
\begin{equation}\label{eq:ScaledHerm}
(\Pi_{n,\lambda}^{\mathrm{SH}}u)(x) = \sum_{k=0}^{n} c_k^{\lambda} \psi_k(\lambda x),  \quad  c_k^{\lambda} = \lambda \int_{\mathbb{R}} u(x) \psi_k(\lambda x) dx.
\end{equation}
Below we state the optimal scaling factor of $\Pi_{n,\lambda}^{\mathrm{SH}}$ for analytic functions with exponential decay behaviors at infinity. This result seems to be new and might be useful in developing efficient Hermite spectral method for PDEs.
\begin{theorem}\label{thm:ScalHerm}
If $u$ is analytic in the strip $\mathcal{S}_{\rho}$ for some $\rho>0$ and $|u(x)|=O(\exp(\alpha x))$ as $x\rightarrow-\infty$ and $|u(x)|=O(\exp(-\beta x))$ as $x\rightarrow\infty$ for some $\alpha,\beta>0$. Then, the optimal scaling factor of $\Pi_{n,\lambda}^{\mathrm{SH}}u$ is $\lambda^{\ast}=\sqrt{\tau/\rho}$ with $\tau=\min\{\alpha,\beta\}$ and
\begin{equation}
\|u - \Pi_{n,\lambda^{\ast}}^{\mathrm{SH}}u\|_{L^{\infty}(\mathbb{R})} \leq \mathcal{K} \sqrt{n} \exp(-\nu\sqrt{n}),
\end{equation}
where $\nu=\sqrt{2\tau\rho}$.
\end{theorem}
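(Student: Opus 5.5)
Reduce the scaled problem to the unscaled one by a change of variable. Put $v(s)=u(s/\lambda)$. Then
\[
c_k^{\lambda}=\lambda\int_{\mathbb{R}}u(x)\psi_k(\lambda x)\,dx=\int_{\mathbb{R}}v(s)\psi_k(s)\,ds ,
\]
so $c_k^{\lambda}$ is the $k$-th Hermite coefficient of $v$. Moreover $(\Pi_{n,\lambda}^{\mathrm{SH}}u)(x)=(\Pi_n^{\mathrm{H}}v)(\lambda x)$, where $\Pi_n^{\mathrm{H}}$ is the standard Hermite projection. Hence
\[
\|u-\Pi_{n,\lambda}^{\mathrm{SH}}u\|_{L^\infty(\mathbb{R})}=\|v-\Pi_n^{\mathrm{H}}v\|_{L^\infty(\mathbb{R})}.
\]

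Next I read off the parameters of $v$. Since $u$ is analytic in $\mathcal{S}_\rho$, $v$ is analytic in $\mathcal{S}_{\lambda\rho}$. The decay transforms as $|v(s)|=O(e^{\alpha s/\lambda})$ as $s\to-\infty$ and $O(e^{-\beta s/\lambda})$ as $s\to\infty$. So $v$ satisfies the hypotheses used in the proof of Theorem \ref{thm:MapHermConv} (the Hille/Boyd coefficient estimate with $\kappa=1$), with strip width $\lambda\rho$ and decay rate $\tau/\lambda$. That estimate gives $|c_k^\lambda|=O(\exp(-\sqrt{2}\min\{\tau/\lambda,\lambda\rho\}\sqrt{k}))$.

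Then I optimize over $\lambda$. The function $\lambda\mapsto\min\{\tau/\lambda,\lambda\rho\}$ is the minimum of a decreasing and an increasing function. It is maximized where they balance, $\tau/\lambda=\lambda\rho$, that is at $\lambda^*=\sqrt{\tau/\rho}$, where the common value is $\sqrt{\tau\rho}$. So the exponent becomes $\nu=\sqrt{2\tau\rho}$. Optimality here is understood in the sense of maximizing the guaranteed root-exponential rate delivered by this bound, and I would state that explicitly.

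The tail estimate finishes exactly as in Theorem \ref{thm:MapHermConv}. Using $|\psi_k|\le\pi^{-1/4}$,
\[
\|v-\Pi_n^{\mathrm{H}}v\|_\infty\le\mathcal{K}\sum_{k>n}e^{-\nu\sqrt{k}}\le\mathcal{K}\int_n^\infty e^{-\nu\sqrt{x}}\,dx\le\mathcal{K}\sqrt{n}\,e^{-\nu\sqrt{n}} .
\]

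The main obstacle is justifying the coefficient bound from \cite{Boyd1984,Hille1940} with the precise constant $\sqrt{2}\min\{\tau,\rho\}$ for $v$. The key is to make sure the two one-sided decay rates only enter through their minimum, so that asymmetric decay $\alpha\neq\beta$ causes no trouble. It is also worth checking that the implied constant $\mathcal{K}$ stays uniform once $\lambda=\lambda^*$ is fixed. This holds automatically, because $\lambda^*$ does not depend on $n$.
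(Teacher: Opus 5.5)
Your proposal is correct and follows essentially the same route as the paper: rescale via $(\mathcal{A}_{\lambda}u)(x)=u(x/\lambda)$ so that the $c_k^{\lambda}$ become standard Hermite coefficients of a function analytic in $\mathcal{S}_{\lambda\rho}$ with decay rate $\tau/\lambda$, invoke Hille's bound with exponent $\sqrt{2}\min\{\lambda\rho,\tau/\lambda\}$, balance to get $\lambda^{\ast}=\sqrt{\tau/\rho}$, and finish with the same tail-sum estimate. Your explicit remark that ``optimal'' means maximizing the guaranteed exponent in this bound is a useful clarification that the paper leaves implicit.
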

\begin{proof}
Since $|\psi_k(x)|\leq1/\pi^{1/4}$ for $x\in(-\infty,\infty)$, we have
\begin{equation}
\|u - \Pi_{n,\lambda}^{\mathrm{SH}}u \|_{L^{\infty}(\mathbb{R})} \leq \frac{1}{\pi^{1/4}} \sum_{k=n+1}^{\infty} |c_k^{\lambda}|, \nonumber
\end{equation}
and thus it is necessary to analyze the decay rate of the coefficients $\{c_k^{\lambda}\}$.
We define the scaling operator $(\mathcal{A}_{\lambda}u)(x)=u(x/\lambda)$. Note that
\[
c_k^{\lambda} = \lambda \int_{\mathbb{R}} u(x) \psi_k(\lambda x) dx = \int_{\mathbb{R}} (\mathcal{A}_{\lambda}u)(x) \psi_k(x) dx.
\]
Since $u$ is analytic in the strip $\mathcal{S}_{\rho}$, $(\mathcal{A}_{\lambda}u)(x)$ is analytic in the strip $\mathcal{S}_{\lambda\rho}$. Moreover, by the decay behavior of $|u(x)|$ as $x\rightarrow\pm\infty$, we can deduce that $|(\mathcal{A}_{\lambda}u)(x)|=O(\exp(\alpha x/\lambda))$ as $x\rightarrow-\infty$ and $|(\mathcal{A}_{\lambda}u)(x)|=O(\exp(-\beta x/\lambda))$ as $x\rightarrow\infty$ and, by \cite{Hille1940}, we obtain $|c_k^{\lambda}|\leq \mathcal{K} \exp(-\nu\sqrt{k})$, where $\nu=\sqrt{2}\min\{\lambda\rho,\tau/\lambda\}$ and $\tau=\min\{\alpha,\beta\}$. Clearly, $\nu$ takes the maximum value when $\lambda\rho=\tau/\lambda$, which gives the optimal scaling factor $\lambda^{\ast}=\sqrt{\tau/\rho}$. In this case, we have $|c_k^{\lambda}|\leq \mathcal{K} \exp(-\nu\sqrt{k})$ with $\nu=\sqrt{2\tau\rho}$, and thus
\begin{equation}
\|u - \Pi_{n,\lambda^{\ast}}^{\mathrm{SH}}u \|_{L^{\infty}(\mathbb{R})} \leq \mathcal{K} \sum_{k=n+1}^{\infty} e^{-\sqrt{2\tau\rho k}} \leq \mathcal{K} \int_{n}^{\infty} e^{-\sqrt{2\tau\rho x}} dx \leq \mathcal{K} \sqrt{n} e^{-\sqrt{2\tau\rho n}}. \nonumber
\end{equation}
This ends the proof.
\end{proof}

\begin{remark}
Under the assumptions of Theorem \ref{thm:ScalHerm}, the Hermite projection $\Pi_{n,1}^{\mathrm{SH}}u$ (i.e., $\lambda=1$) converges at the rate $O(\exp(-\nu\sqrt{n}))$ with $\nu=\sqrt{2}\min\{\tau,\rho\}$. In contrast, when choosing the optimal scaling factor $\lambda^{\ast}=\sqrt{\tau/\rho}$, the Hermite projection $\Pi_{n,\lambda^{\ast}}^{\mathrm{SH}}u$ converges at the rate $O(\exp(-\nu\sqrt{n}))$ with $\nu=\sqrt{2\tau\rho}$. Since $\min\{\tau,\rho\}\leq\sqrt{\tau\rho}$ for any $\tau,\rho>0$ with equality only when $\tau=\rho$, we conclude that the convergence rate of $\Pi_{n,\lambda^{\ast}}^{\mathrm{SH}}u$ will be faster than the Hermite projection $\Pi_{n,1}^{\mathrm{SH}}u$ when $\tau\neq\rho$.
\end{remark}

\begin{example}
Consider the function $u(x)=\mathrm{sech}(\kappa x)$ with $\kappa>0$. Clearly, its poles which are nearest to the real axis are $\pm\mathrm{i}\pi/(2\kappa)$, and thus $\rho=\pi/(2\kappa)-\epsilon$ with $\epsilon$ being arbitrarily close to zero. Moreover, $|u(x)|=O(\exp(-\kappa|x|))$ as $x\rightarrow\pm\infty$, and thus the convergence rate of the standard Hermite projection $\Pi_{n,1}^{\mathrm{SH}}u$ is $O(\exp(-\nu\sqrt{n}))$ with $\nu=\sqrt{2}\min\{\kappa,\pi/(2\kappa)\}$. In contrast, if we choose the optimal scaling factor $\lambda^{\ast}=(2/\pi)^{1/2}\kappa$, then the convergence rate of the scaled Hermite projection $\Pi_{n,\lambda^{\ast}}^{\mathrm{SH}}u$ is $O(\exp(-\nu\sqrt{n}))$ with $\nu=\sqrt{\pi}$.
\end{example}

Now we turn to the approximation \eqref{def:ScaledHermProj} with $\varphi=\varphi_{\mathrm{SE}}$. For clearness of presentation, we denote it by
\begin{equation}\label{def:ScaledSEProj}
(\Pi_{n,\lambda}^{\mathrm{SE}}f)(x) = \sum_{k=0}^{n} a_{k,\lambda}^{\mathrm{SE}} \Psi_{k,\lambda}^{\mathrm{SE}}(x), \quad  a_{k,\lambda}^{\mathrm{SE}} = \lambda \int_{I} f(x) \Psi_{k,\lambda}^{\mathrm{SE}}(x) \varphi'_{\mathrm{SE}}(x) dx.
\end{equation}
The optimal scaling factor and the corresponding convergence rate are stated in the following theorem.
\begin{theorem}\label{thm:ScaledSEConv}
If $f$ is analytic in $\phi_{\mathrm{SE}}(\mathcal{S}_{\rho})$ for some $\rho\in(0,\pi)$ and $|f(x)|\leq \mathcal{K} |(x-a)^{\alpha}(b-x)^{\beta}|$ for some $\alpha,\beta>0$ and $x\in I$. Then, the optimal scaling factor of $\Pi_{n,\lambda}^{\mathrm{SE}}f$ is $\lambda^{\ast}=\sqrt{\tau/\rho}$ with $\tau=\min\{\alpha,\beta\}$, and
\begin{equation}\label{eq:ScaledSEConv}
\|f - \Pi_{n,\lambda^{\ast}}^{\mathrm{SE}}f\|_{L^{\infty}(I)} \leq \mathcal{K} \sqrt{n} \exp(-\nu\sqrt{n}),
\end{equation}
where $\nu=\sqrt{2\tau\rho}$.
\end{theorem}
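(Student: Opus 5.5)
The plan is to reduce the statement to Theorem \ref{thm:ScalHerm} by the change of variables $s=\varphi_{\mathrm{SE}}(x)$. Set $u(s)=(f\circ\phi_{\mathrm{SE}})(s)$ for $s\in\mathbb{R}$. Substituting $x=\phi_{\mathrm{SE}}(s)$ in the coefficient formula \eqref{def:ScaledSEProj} gives $dx\,\varphi'_{\mathrm{SE}}(x)=ds$ and $\Psi_{k,\lambda}^{\mathrm{SE}}(x)=\psi_k(\lambda s)$. Hence
\[
a_{k,\lambda}^{\mathrm{SE}}=\lambda\int_{\mathbb{R}}u(s)\psi_k(\lambda s)\,ds=c_k^{\lambda},
\]
where $c_k^{\lambda}$ is the coefficient in \eqref{eq:ScaledHerm}. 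It follows that $(\Pi_{n,\lambda}^{\mathrm{SE}}f)(\phi_{\mathrm{SE}}(s))=(\Pi_{n,\lambda}^{\mathrm{SH}}u)(s)$. Because $\phi_{\mathrm{SE}}$ is a bijection from $\mathbb{R}$ onto $I$, this yields
\[
\|f-\Pi_{n,\lambda}^{\mathrm{SE}}f\|_{L^\infty(I)}=\|u-\Pi_{n,\lambda}^{\mathrm{SH}}u\|_{L^\infty(\mathbb{R})}
\]
for every $\lambda>0$. Both the identification of the optimal scaling factor and the error bound therefore transfer directly from the Hermite setting on $\mathbb{R}$.

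Next I verify the hypotheses of Theorem \ref{thm:ScalHerm} for $u$. Analyticity is immediate: $f$ is analytic on $\phi_{\mathrm{SE}}(\mathcal{S}_{\rho})$ and $\phi_{\mathrm{SE}}$ is analytic on $\mathcal{S}_\rho$, since $\rho<\pi$ keeps us away from the poles of $\tanh(s/2)$ at $s=\pm\mathrm{i}\pi$. Thus $u$ is analytic in $\mathcal{S}_{\rho}$. For the decay, use the explicit formulas
\[
x-a=\frac{(b-a)e^{s}}{1+e^{s}},\qquad b-x=\frac{b-a}{1+e^{s}}.
\]
These give $|u(s)|\le\mathcal{K}(x-a)^{\alpha}(b-x)^{\beta}\le\mathcal{K}e^{\alpha s}$ as $s\to-\infty$ and $|u(s)|\le\mathcal{K}e^{-\beta s}$ as $s\to+\infty$. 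This is the same computation already used in the proof of Theorem \ref{thm:SEConv}.

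Applying Theorem \ref{thm:ScalHerm} then gives the coefficient bound $|c_k^\lambda|\le\mathcal{K}e^{-\nu(\lambda)\sqrt{k}}$ with $\nu(\lambda)=\sqrt{2}\min\{\lambda\rho,\tau/\lambda\}$. Here $\tau=\min\{\alpha,\beta\}$ and $\rho$ is the strip width. Maximizing $\nu(\lambda)$ over $\lambda>0$ gives $\lambda^*=\sqrt{\tau/\rho}$ and $\nu=\sqrt{2\tau\rho}$. Summing the tail exactly as in Theorem \ref{thm:MapHermConv}, using $|\psi_k|\le\pi^{-1/4}$, yields \eqref{eq:ScaledSEConv}.

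The only delicate point is the meaning of "optimal". It is optimal with respect to the bound $\sqrt{2}\min\{\lambda\rho,\tau/\lambda\}$ inherited from the result of \cite{Hille1940}, not in an absolute sense. I would state this interpretation exactly as in Theorem \ref{thm:ScalHerm}. Everything else is routine substitution.
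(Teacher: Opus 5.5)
Your proposal is correct and follows essentially the same route as the paper: set $u=f\circ\phi_{\mathrm{SE}}$, use $\|f-\Pi_{n,\lambda}^{\mathrm{SE}}f\|_{L^\infty(I)}=\|u-\Pi_{n,\lambda}^{\mathrm{SH}}u\|_{L^\infty(\mathbb{R})}$, check analyticity in $\mathcal{S}_\rho$ and the one-sided exponential decay $O(e^{\alpha s})$, $O(e^{-\beta s})$, then invoke Theorem \ref{thm:ScalHerm}. The only difference is that you re-derive the $\lambda$-dependent rate $\sqrt{2}\min\{\lambda\rho,\tau/\lambda\}$ and its maximization, which the paper does inside the proof of Theorem \ref{thm:ScalHerm} rather than in this theorem.
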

\begin{proof}
Let $x=\phi_{\mathrm{SE}}(s)$ with $s\in(-\infty,\infty)$ and $g(s)=(f\circ\phi_{\mathrm{SE}})(s)$. Clearly, $(\Pi_{n,\lambda}^{\mathrm{SE}}f)(x) = (\Pi_{n,\lambda}^{\mathrm{SH}}g)(s)$ and
\begin{equation}
\|f - \Pi_{n,\lambda}^{\mathrm{SE}}f\|_{L^{\infty}(I)} = \|g - \Pi_{n,\lambda}^{\mathrm{SH}}g \|_{L^{\infty}(\mathbb{R})}. \nonumber
\end{equation}
By the assumptions on $f$, it is easily verified that $g(s)$ is analytic in the strip $\mathcal{S}_{\rho}$ and
$|g(s)|=O(\exp(\alpha s))$ as $s\rightarrow-\infty$ and $|g(s)|=O(\exp(-\beta s))$ as $s\rightarrow\infty$, the desired result \eqref{eq:ScaledSEConv} follows immediately from Theorem \ref{thm:ScalHerm}. This ends the proof.
\end{proof}

Now we present numerical experiments verifying Theorem \ref{thm:ScaledSEConv}. We consider $f(x) = x^{\alpha}(1-x)^{\beta}$ with $(\alpha,\beta)=(1/2,1/2)$ and $(\alpha,\beta)=(2/3,1)$ and $f(x)=x^{\alpha}\log^{\beta}(x)$ with $(\alpha,\beta)=(1,1)$ and $(\alpha,\beta)=(1,2)$. In Figure \ref{fig:Exam3} we plot the maximum errors of $\Pi_{n,\lambda}^{\mathrm{SE}}f$ with $\lambda=\lambda^{\ast}$ and $\lambda=1$ as a function of $\sqrt{n}$. Moreover, we also plot the predicted convergence rates $O(n^{\kappa}\exp(-\nu\sqrt{n}))$ with $\nu=\sqrt{2\tau\pi}$ for $\lambda=\lambda^{\ast}$ and $\nu=\sqrt{2}\min\{\tau,\pi\}$ for $\lambda=1$. We see that the maximum error of $\Pi_{n,\lambda^{\ast}}^{\mathrm{SE}}f$ decays at the predicted rate, up to some small algebraic factors. Moreover, we also see that the convergence rate of $\Pi_{n,\lambda^{\ast}}^{\mathrm{SE}}f$ is much faster than that of $\Pi_{n,1}^{\mathrm{SE}}f$.

\begin{figure}[htbp]
\centering
\includegraphics[width=0.49\textwidth,height=0.42\textwidth]{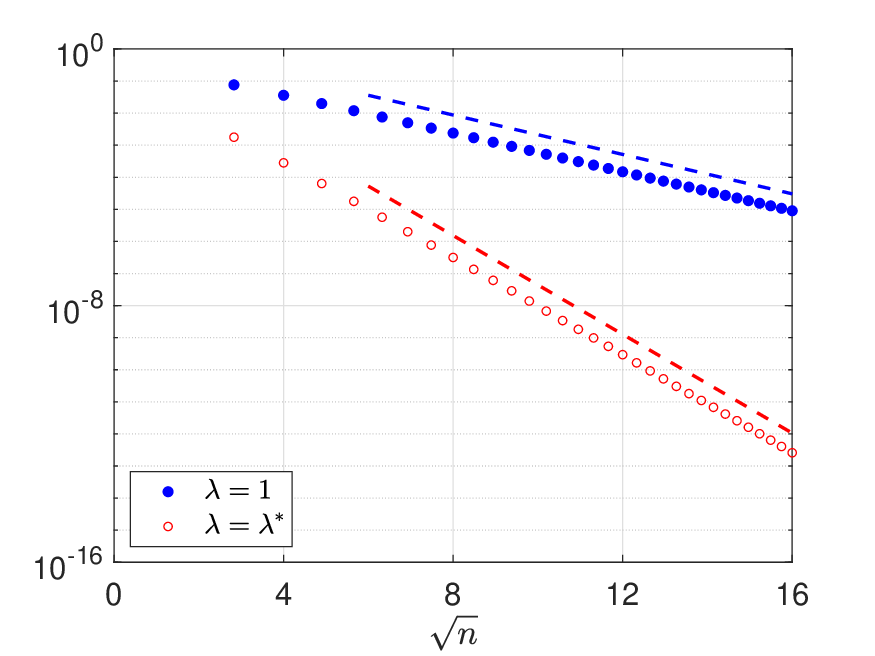}
\includegraphics[width=0.49\textwidth,height=0.42\textwidth]{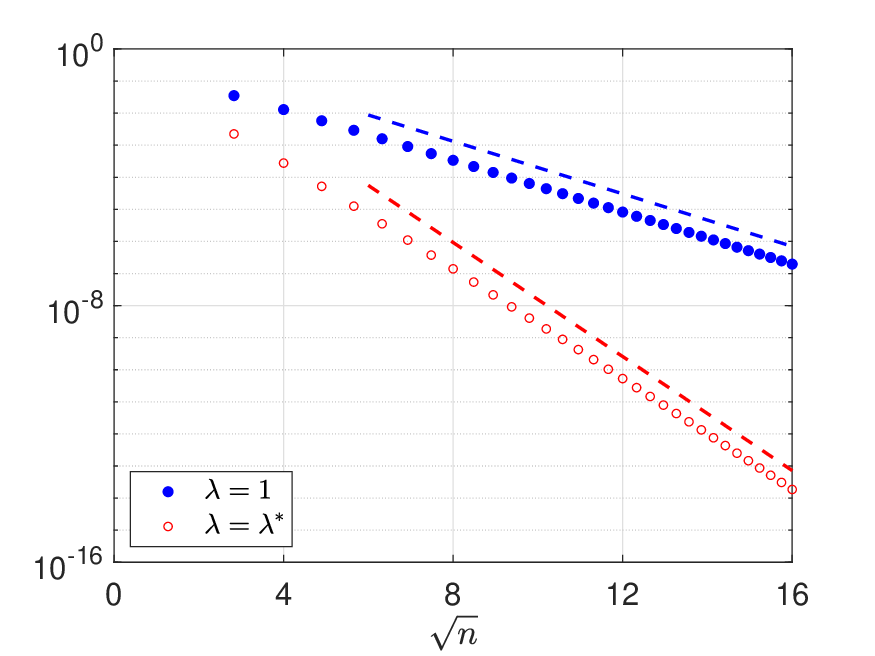}\\
\includegraphics[width=0.49\textwidth,height=0.42\textwidth]{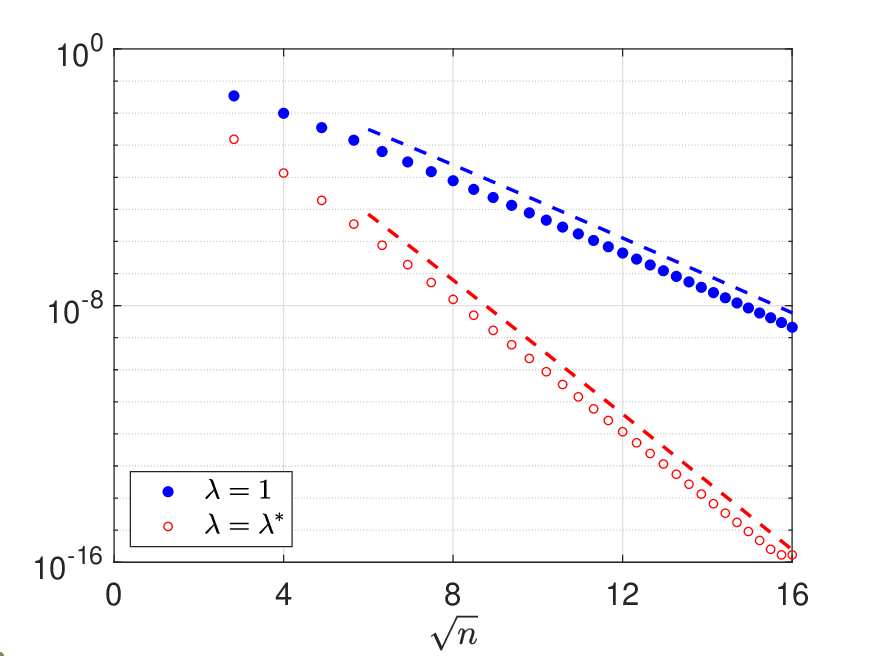}
\includegraphics[width=0.49\textwidth,height=0.42\textwidth]{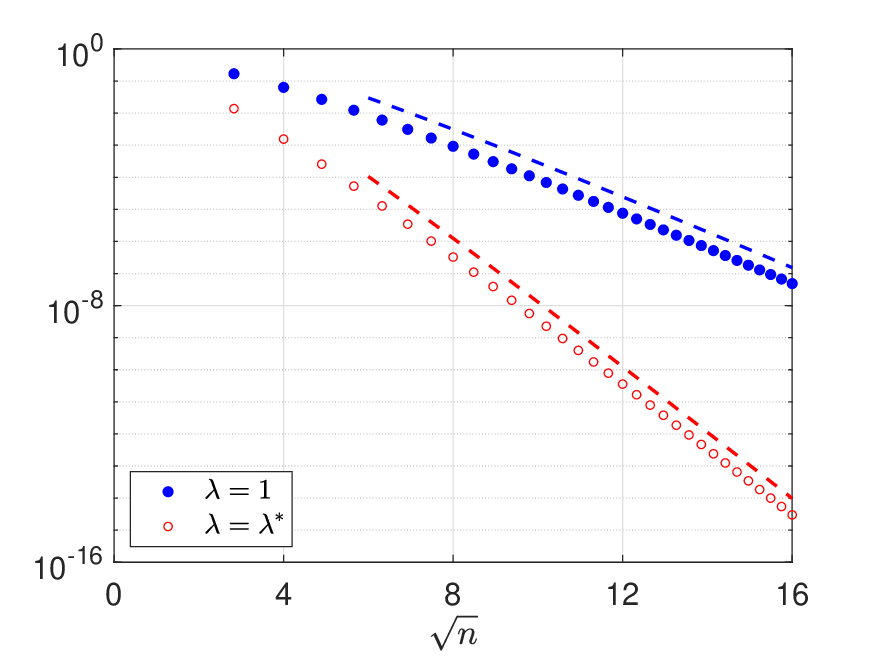}
\caption{Top: Maximum errors of $\Pi_{n,\lambda}^{\mathrm{SE}}f$ with $\lambda=\lambda^{\ast}$ and $\lambda=1$ as a function of $\sqrt{n}$ for $f(x) = x^{\alpha}(1-x)^{\beta}$. Here $(\alpha,\beta)=(1/2,1/2)$ (left) and $(\alpha,\beta)=(2/3,1)$ (right). Bottom: Maximum errors of $\Pi_{n,\lambda}^{\mathrm{SE}}f$ with $\lambda=\lambda^{\ast}$ and $\lambda=1$ as a function of $\sqrt{n}$ for $f(x)=x^{\alpha}\log^{\beta}(x)$. Here $(\alpha,\beta)=(1,1)$ (left) and $(\alpha,\beta)=(1,2)$ (right). The dashed lines from top to bottom show the rates $O(n^{\kappa}\exp(-\nu\sqrt{n}))$ with $\nu=\sqrt{2}\min\{\tau,\pi\}$ and $\nu=\sqrt{2\tau\pi}$, and $\kappa=0$ (top row) and $\kappa=1/2$ (bottom, left) and $\kappa=1$ (bottom, right).
}\label{fig:Exam3}
\end{figure}

SE-Sinc approximation is an efficient method for approximating functions with endpoint singularities \cite{Stenger1993}. Let $S(j,h)(x)$ denote the sinc function
\begin{equation}\label{def:SincFun}
S(j,h)(x) = \frac{\sin(\pi(x/h-j))}{\pi(x/h-j)},
\end{equation}
where $h>0$ is the step size. The SE-Sinc approximation is defined by
\begin{align}\label{def:SESinc}
(\mathcal{T}_{M+N}^{\mathrm{SE}}f)(x) &= \sum_{j=-M}^{N} f(\phi_{\mathrm{SE}}(jh)) S(j,h)(\varphi_{\mathrm{SE}}(x)),
\end{align}
where $M=n$ and $N=\lfloor \alpha n/\beta \rfloor$ if $\tau=\alpha$, and $N=n$ and $M=\lfloor \beta n/\alpha \rfloor$ if $\tau=\beta$. Under the assumptions of Theorem \ref{thm:ScaledSEConv} and choosing $h=\sqrt{\pi\rho/(\tau n)}$, by \cite[Theorem~4.2.5]{Stenger1993} we know that
\begin{align}\label{eq:SESincBound}
\|f - \mathcal{T}_{M+N}^{\mathrm{SE}}f \|_{L^{\infty}(I)} \leq \mathcal{K} \sqrt{n} \exp(-\nu\sqrt{n}),
\end{align}
where $\nu=\sqrt{\pi\tau\rho}$. Regarding the SE-Sinc and SE-Hermite approximations using the same number of terms, i.e., $(\mathcal{T}_{M+N}^{\mathrm{SE}}f)(x)$ and $(\Pi_{M+N,\lambda^{\ast}}^{\mathrm{SE}}f)(x)$, which one has faster convergence rate? Below we state the comparison result:
\begin{itemize}
\item Let $\sigma=\min\{\alpha,\beta\}/\max\{\alpha,\beta\}+1$, then $M+N\approx\sigma n$. By Theorem \ref{thm:ScaledSEConv} we know that $\Pi_{M+N,\lambda^{\ast}}^{\mathrm{SE}}f$ converges at the rate $O(\exp(-\nu\sqrt{n}))$ with $\nu=\sqrt{2\sigma\tau\rho}$. Comparing this with \eqref{eq:SESincBound}, we see that both methods converge at the same rate when $\sigma=\pi/2\approx1.5708$, and the convergence rate of $\Pi_{M+N,\lambda^{\ast}}^{\mathrm{SE}}f$ will be faster when $\sigma\in(\pi/2,2]$ and the convergence rate of $\mathcal{T}_{M+N}^{\mathrm{SE}}f$ will be faster when $\sigma\in(1,\pi/2)$.
\end{itemize}

\begin{figure}[htbp]
\centering
\includegraphics[width=0.49\textwidth,height=0.42\textwidth]{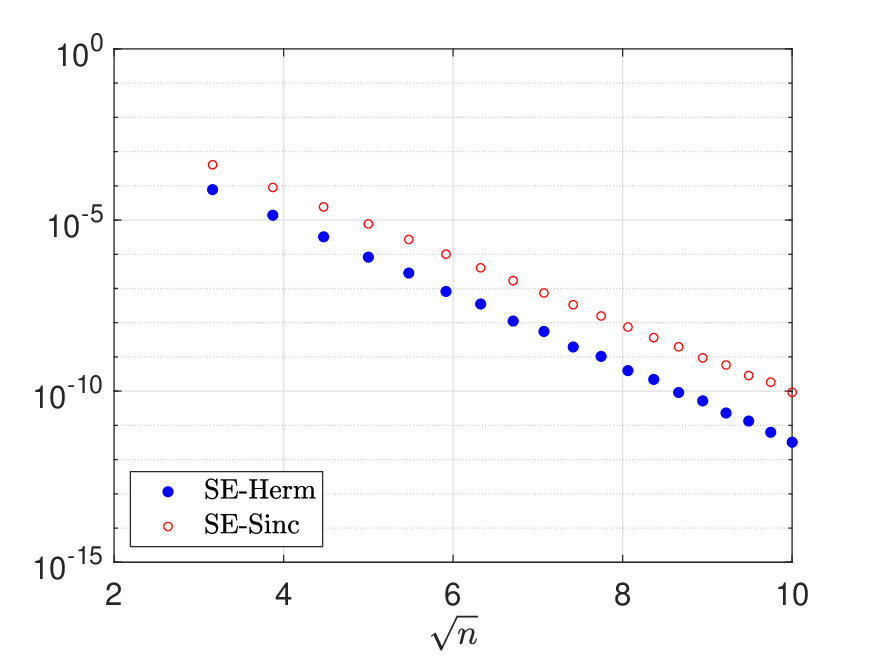}
\includegraphics[width=0.49\textwidth,height=0.42\textwidth]{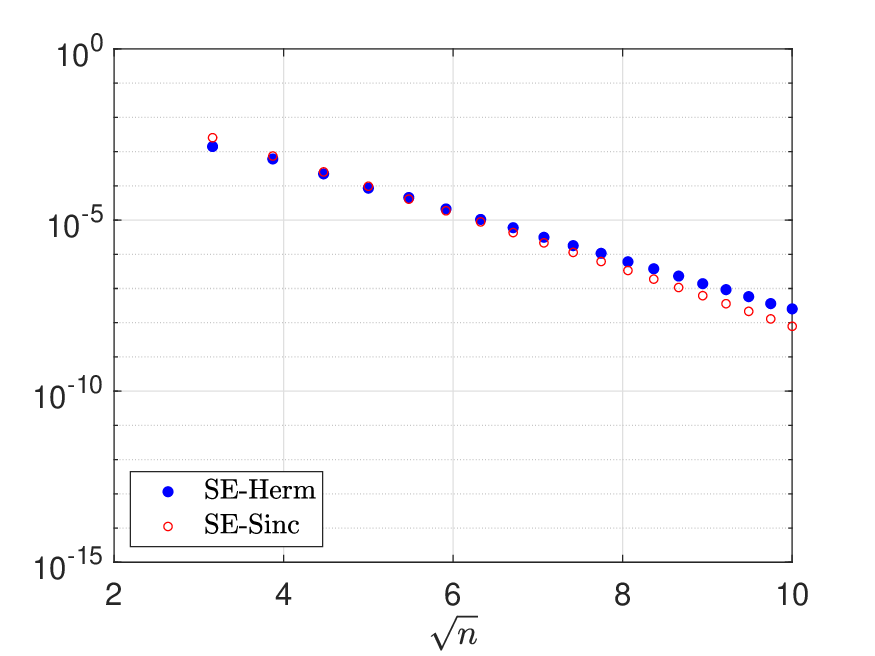}\\
\includegraphics[width=0.49\textwidth,height=0.42\textwidth]{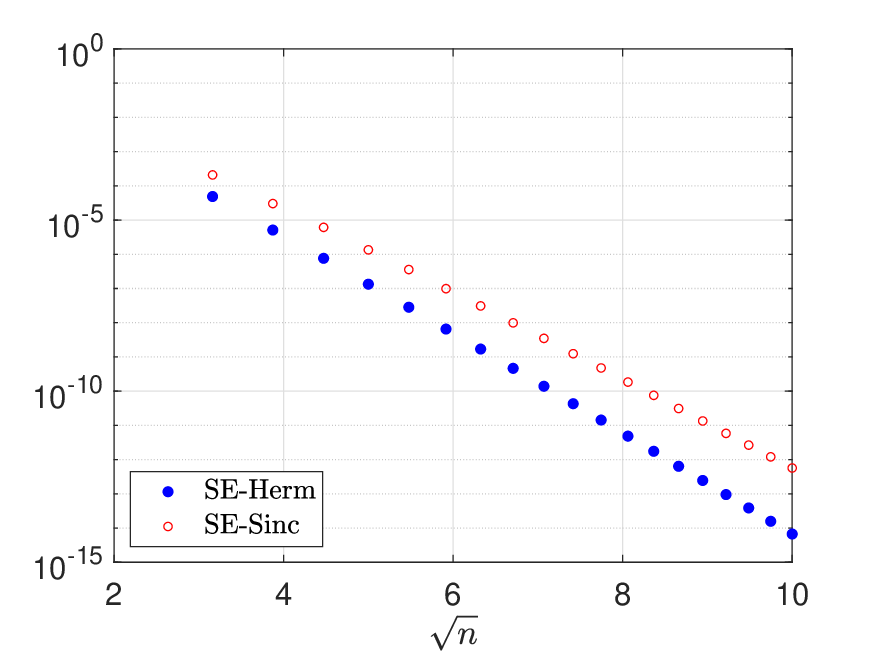}
\includegraphics[width=0.49\textwidth,height=0.42\textwidth]{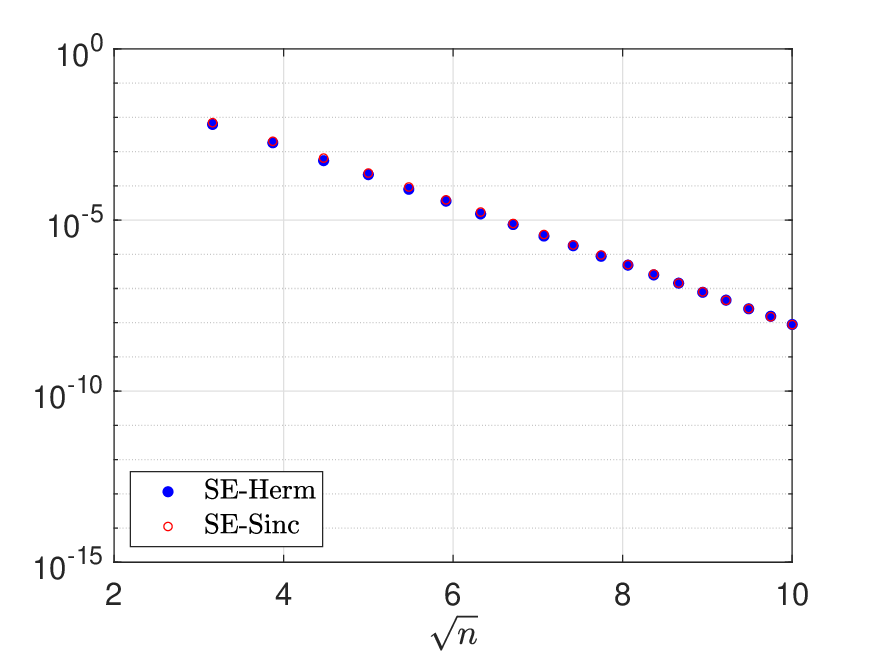}
\caption{Top: Maximum errors of $\mathcal{T}_{M+N}^{\mathrm{SE}}f$ (SE-Sinc) and $\Pi_{M+N,\lambda^{\ast}}^{\mathrm{SE}}f$ (SE-Herm) as a function of $\sqrt{n}$ for $f(x)=x^{\alpha}(1-x)^{\beta}/(x+2)$ with $(\alpha,\beta)=(1/2,1/2)$ (left) and $(\alpha,\beta)=(1/3,1)$ (right). Bottom: Maximum errors of $\mathcal{T}_{M+N}^{\mathrm{SE}}f$ (SE-Sinc) and $\Pi_{M+N,\lambda^{\ast}}^{\mathrm{SE}}f$ (SE-Herm) as a function of $\sqrt{n}$ for $f(x)=x^{\alpha}\log^{\beta}(x)$ with $(\alpha,\beta)=(1,1)$ (left) and $(\alpha,\beta)=(1/2,1)$ (right). }\label{fig:Exam4}
\end{figure}

In the top row of Figure \ref{fig:Exam4} we plot the maximum errors of $\mathcal{T}_{M+N}^{\mathrm{SE}}f$ and $\Pi_{M+N,\lambda^{\ast}}^{\mathrm{SE}}f$ as a function of $\sqrt{n}$ for $f(x)=x^{\alpha}(1-x)^{\beta}g(x)$ with $(\alpha,\beta)=(1/2,1/2)$ and $(\alpha,\beta)=(1/2,3/2)$ and $g(x)=1/(x+2)$. The step size in $\mathcal{T}_{M+N}^{\mathrm{SE}}f$ is chosen as $h=\pi/\sqrt{\tau n}$. As expected, we see that the convergence rate of $\Pi_{M+N,\lambda^{\ast}}^{\mathrm{SE}}f$ is obviously faster than that of $\mathcal{T}_{M+N}^{\mathrm{SE}}f$ for $(\alpha,\beta)=(1/2,1/2)$ and the convergence rate of $\mathcal{T}_{M+N}^{\mathrm{SE}}f$ is slightly faster than that of $\Pi_{M+N,\lambda^{\ast}}^{\mathrm{SE}}f$ for $(\alpha,\beta)=(1/2,3/2)$. In the bottom row of Figure \ref{fig:Exam4} we plot the maximum errors of $\mathcal{T}_{M+N}^{\mathrm{SE}}f$ and $\Pi_{M+N,\lambda^{\ast}}^{\mathrm{SE}}f$ as a function of $\sqrt{n}$ for $f(x)=x^{\alpha}\log^{\beta}(x)$ with $(\alpha,\beta)=(1,1)$ and $(\alpha,\beta)=(1/2,1)$. We see that the convergence rate of $\Pi_{M+N,\lambda^{\ast}}^{\mathrm{SE}}f$ is obviously faster than that of $\mathcal{T}_{M+N}^{\mathrm{SE}}f$ for $(\alpha,\beta)=(1,1)$ and both are indistinguishable for $(\alpha,\beta)=(1/2,1)$.

\subsection{Optimal scaling of DE- and EF-Hermite approximations}
In this subsection we consider the approximation \eqref{def:ScaledHermProj} with $\varphi=\varphi_{\mathrm{DE}}$ and $\varphi=\varphi_{\mathrm{EF}}$ and, for clearness of presentation, we denote them respectively by
\begin{equation}\label{def:ScaledDEProj}
(\Pi_{n,\lambda}^{\mathrm{DE}}f)(x) = \sum_{k=0}^{n} a_{k,\lambda}^{\mathrm{DE}} \Psi_{k,\lambda}^{\mathrm{DE}}(x), \quad  a_{k,\lambda}^{\mathrm{DE}} = \lambda \int_{I} f(x) \Psi_{k,\lambda}^{\mathrm{DE}}(x) \varphi'_{\mathrm{DE}}(x) dx,
\end{equation}
and
\begin{equation}\label{def:ScaledEFProj}
(\Pi_{n,\lambda}^{\mathrm{EF}}f)(x) = \sum_{k=0}^{n} a_{k,\lambda}^{\mathrm{EF}} \Psi_{k,\lambda}^{\mathrm{EF}}(x), \quad  a_{k,\lambda}^{\mathrm{EF}} = \lambda \int_{I} f(x) \Psi_{k,\lambda}^{\mathrm{EF}}(x) \varphi'_{\mathrm{EF}}(x) dx.
\end{equation}
We begin by recalling a recent result on scaling factor of the approximation \eqref{eq:ScaledHerm}. In the sequel $f\lesssim g$ means that there exists a fixed constant $\mathcal{K}>0$ such that $f\leq \mathcal{K} g$.
\begin{theorem}\label{thm:ScaledHerm}
Let $u\in L^2(\mathbb{R})$ and let $\Pi_{n,\lambda}^{\mathrm{SH}}u$ denote the Hermite approximation defined in \eqref{eq:ScaledHerm}. Then, it holds
\begin{equation}
\|u - \Pi_{n,\lambda}^{\mathrm{SH}}u \|_{L^2(\mathbb{R})} \lesssim \|u(x)\cdot\mathbb{I}_{\{|x|>M/\lambda\}} \|_{L^2(\mathbb{R})} + \|\hat{u}(\xi) \cdot \mathbb{I}_{\{|\xi|>\lambda M\}} \|_{L^2(\mathbb{R})} + e^{-n/16} \|u\|_{L^2(\mathbb{R})}, \nonumber
\end{equation}
where $\hat{u}(\cdot)=\mathcal{F}[u](\cdot)$ is the Fourier transform of $u$ and $M=\sqrt{n}/(2\sqrt{2})$.
\end{theorem}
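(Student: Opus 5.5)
We first reduce to the unscaled case $\lambda=1$. Since $c_k^{\lambda}=\int_{\mathbb{R}}(\mathcal{A}_{\lambda}u)(x)\psi_k(x)\,dx$, we have $(\Pi_{n,\lambda}^{\mathrm{SH}}u)(x)=(\Pi_{n,1}^{\mathrm{SH}}\mathcal{A}_\lambda u)(\lambda x)$. The dilation $v=\mathcal{A}_\lambda u$ satisfies $\|u\|_{L^2}=\lambda^{-1/2}\|v\|_{L^2}$ and $\|u\,\mathbb{I}_{\{|x|>M/\lambda\}}\|=\lambda^{-1/2}\|v\,\mathbb{I}_{\{|x|>M\}}\|$. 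On the Fourier side $\hat v(\xi)=\lambda\hat u(\lambda\xi)$, hence $\|\hat u\,\mathbb{I}_{\{|\xi|>\lambda M\}}\|=\lambda^{-1/2}\|\hat v\,\mathbb{I}_{\{|\xi|>M\}}\|$. Every term therefore carries the same factor $\lambda^{-1/2}$, and it suffices to prove
\[
\|v-\Pi_{n}v\|_{L^2}\lesssim \|v\,\mathbb{I}_{\{|x|>M\}}\|+\|\hat v\,\mathbb{I}_{\{|\xi|>M\}}\|+e^{-n/16}\|v\|,
\]
where $\Pi_n=\Pi_{n,1}^{\mathrm{SH}}$ and $M=\sqrt{n}/(2\sqrt2)$.

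For the unscaled estimate we use two facts. First, $\Pi_n$ is the best $L^2$ approximation from $\mathbb{H}_n$. Second, the Hermite functions are eigenfunctions of the Fourier transform, so $\Pi_n$ commutes with $\mathcal{F}$. Let $P$ be multiplication by $\mathbb{I}_{\{|x|\le M\}}$ and $Q=\mathcal{F}^{-1}\mathbb{I}_{\{|\xi|\le M\}}\mathcal{F}$. We write
\[
v=(I-P)v+P(I-Q)v+PQv .
\]
The first two pieces are bounded by the tail terms, since $\|P(I-Q)v\|\le\|(I-Q)v\|=\|\hat v\,\mathbb{I}_{\{|\xi|>M\}}\|$. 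It then remains to show that the time–frequency localized function $w=PQv$ satisfies $\|(I-\Pi_n)w\|\lesssim e^{-n/16}\|v\|$. Because $\|I-\Pi_n\|\le1$, the first two pieces contribute at most their norms.

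The main step is to show that $(I-\Pi_n)PQ$ has operator norm $\lesssim e^{-n/16}$. We will do this through the harmonic oscillator $H=-\frac{d^2}{dx^2}+x^2$, which has $H\psi_k=(2k+1)\psi_k$. The idea is that an operator essentially supported in the phase-space box $\{|x|\le M,\ |\xi|\le M\}$ only sees $x^2+\xi^2\le 2M^2=n/4$. This region lies well inside the classically allowed set $x^2+\xi^2\le 2n+1$ of $\mathbb{H}_n$, so the overlap with $\psi_k$ for $k>n$ should be exponentially small.

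To make this rigorous, we try first a Gaussian generating-function argument. We bound $\sum_{k>n}|\langle w,\psi_k\rangle|^2\le e^{-tn}\sum_k e^{tk}|\langle w,\psi_k\rangle|^2=e^{-tn}\|e^{tH/2}w\|^2$ up to constants. We then estimate $e^{tH/2}$ on $w$ via the Mehler kernel, i.e. by analytic continuation of the heat semigroup, using that $w$ has compact support and $\hat w=\widehat{P}*(\mathbb{I}_{\{|\xi|\le M\}}\hat v)$. The delicate point is that $P$ destroys the band-limitedness. To handle this, we will replace the sharp cutoffs by Gaussian windows, e.g. $e^{-x^2/2}$-type weights rescaled to width $M$; the tails created by the windows are absorbed into the first two terms. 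Equivalently, we bound $|\langle w,\psi_k\rangle|$ using the Bargmann transform, where $w$ corresponds to an entire function concentrated on $|z|\lesssim M$ and $\psi_k\mapsto z^k/\sqrt{k!}$, so that Cauchy estimates give $\lesssim (eM^2/k)^{k/2}$.

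Optimizing $t$, or summing the Cauchy bounds over $k>n$ with $M^2=n/8$, gives the constant $1/16$. We expect this phase-space concentration estimate, and tracking the constant exactly, to be the main obstacle. The dilation reduction and the splitting are routine.
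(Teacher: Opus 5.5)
There is a genuine gap in the main step. Your dilation reduction is correct, since every term picks up the same factor $\lambda^{-1/2}$, and the three-term splitting is a reasonable skeleton. But the step that carries the whole proof is false as you state it: with the sharp cutoff $P$, the operator $(I-\Pi_n)PQ$ is not exponentially small in norm.

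Take $\hat v(\xi)=e^{-\mathrm{i}M\xi}\chi(\xi)$ with $\chi\ge0$ a fixed smooth bump supported in $[-1,1]$. Then $Qv=v$, $v(x)=\check\chi(x-M)$ and $\|v\|=\|\chi\|$. So $PQv$ has a jump of size $\check\chi(0)>0$, independent of $n$, at $x=M=\sqrt{n/8}$. This point lies well inside the oscillatory region $|x|<\sqrt{2k+1}$ of every $\psi_k$ with $k>n$. One integration by parts against the Plancherel--Rotach form of $\psi_k$ gives $|\langle PQv,\psi_k\rangle|\approx \check\chi(0)\sqrt{2/\pi}\,(2k+1-M^2)^{-3/4}|\sin\Theta_k(M)|$, with a phase not trapped near $\pi\mathbb{Z}$. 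Hence $\|(I-\Pi_n)PQv\|$ is of order $n^{-1/4}$, not $e^{-n/16}$. This $v$ has half its mass outside $[-M,M]$, so the theorem itself is not contradicted, but your bound for the third piece is.

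For the same reason, $e^{tH/2}PQv\notin L^2$ for every $t>0$. Likewise, the Bargmann transform of $PQv$ is not concentrated on $|z|\lesssim M$, because the jumps at $x=\pm M$ leave slowly decaying phase-space tails along those lines. So neither your Mehler route nor the Cauchy estimates can be run on $PQv$. Everything therefore rests on your one sentence about replacing cutoffs by windows, and in the form you suggest it fails.

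What you need is a window $T$ with $0\le T\le 1$, so that $\|T(I-Q)v\|\le\|\hat v\,\mathbb{I}_{\{|\xi|>M\}}\|$, and with $1-T=O(e^{-M^2/2})$ uniformly on $[-M,M]$. A Gaussian of width $M$ such as $e^{-x^2/(2M^2)}$ does not qualify: $1-T$ is of order one inside $[-M,M]$, and that error is not a tail and cannot be absorbed.

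The paper (Appendix~A, following the cited Theorem~2.4) uses the plateau window $T_M=\mathbb{I}_{[-2M,2M]}\ast G$, with $G$ the unit Gaussian, and keeps the frequency cutoff sharp. It then observes that
\[
(T_M\,Qv)(x)=\frac{1}{2\pi}\int_{|\xi|\le M}\int_{|y|\le 2M}\hat v(\xi)\,e^{\mathrm{i}\xi x-(x-y)^2/2}\,dy\,d\xi
\]
is an exact superposition of Gaussian wave packets with explicit Hermite coefficients. Their moduli are $|c_k(\xi,y)|=\pi^{1/4}e^{-r^2/4}r^k/\sqrt{2^k k!}$ with $r^2=y^2+\xi^2\le 5M^2<2k$. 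Cauchy--Schwarz and Stirling then give $\|(I-\Pi_n)T_MQv\|\lesssim \mathrm{poly}(n)\,e^{-cn}\|v\|$ with $c\approx 0.24$.

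So the constant $1/16$ does not come from optimizing $t$ or from summing Cauchy bounds; your heuristic at $k=n$ gives $(e/8)^{n/2}\approx e^{-0.54n}$. It is $M^2/2$ with $M^2=n/8$, and it enters through
\[
\|(1-T_M)v\|\le\|v\,\mathbb{I}_{\{|x|>M\}}\|+\sqrt{2/\pi}\,M^{-1}e^{-M^2/2}\|v\|,
\]
that is, through the unit-width Gaussian edge of the window. With $T_M$ in place of $P$, your splitting $v=(1-T_M)v+T_M(I-Q)v+T_MQv$ becomes exactly the paper's argument.
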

\begin{proof}
See \cite[Theorem~2.4]{Hu2026}.
\end{proof}

Below we present a generalization of the above theorem to $L^{\infty}$-norm.
\begin{theorem}\label{thm:ScaledHermInf}
Let $M=\sigma\sqrt{n}$ and $N=\eta\sqrt{n}$ with $\sigma,\eta>0$. Then, for $4\sigma^2+\eta^2\in(0,2)$, it holds
\begin{align}
\|u - \Pi_{n,\lambda}^{\mathrm{SH}}u \|_{L^{\infty}(\mathbb{R})} &\lesssim (1+\Lambda_n) \left( \|u(x)\cdot\mathbb{I}_{\{|x|>M/\lambda\}} \|_{L^{\infty}(\mathbb{R})} + \|\hat{u}(\xi) \cdot \mathbb{I}_{\{|\xi|>\lambda N\}} \|_{L^1(\mathbb{R})} \right) \nonumber \\[1ex]
&+ e^{-\sigma^2 n} \left( \frac{1+\Lambda_n}{\sigma\sqrt{n}} \right) \|u\|_{L^{\infty}(\mathbb{R})} + \lambda n^{3/4} e^{-\varrho n} \|\hat{u}\|_{L^{\infty}(\mathbb{R})} , \nonumber
\end{align}
where $\Lambda_n$ is the Lebesgue constant of the projection operator $\Pi_{n,1}^{\mathrm{SH}}$ in $L^{\infty}$-norm, i.e.,
\[
\Lambda_n = \sup_{f\not\equiv0} \frac{\|\Pi_{n,1}^{\mathrm{SH}}f\|_{L^{\infty}(\mathbb{R})}}{\|f\|_{L^{\infty}(\mathbb{R})}},
\]
and
\[
\varrho = \frac{4\sigma^2+\eta^2}{4} - \frac{1}{2}\log\frac{e(4\sigma^2+\eta^2)}{2}.
\]
\end{theorem}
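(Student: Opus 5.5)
We first reduce to the case $\lambda=1$. Since $(\Pi_{n,\lambda}^{\mathrm{SH}}u)(x)=(\Pi_{n,1}^{\mathrm{SH}}\mathcal{A}_{\lambda}u)(\lambda x)$, where $(\mathcal{A}_{\lambda}u)(x)=u(x/\lambda)$, the sup-norm error is invariant under this change of variables. The scaling acts on the data terms as
\[
\|(\mathcal{A}_\lambda u)\mathbb{I}_{\{|x|>M\}}\|_\infty=\|u\,\mathbb{I}_{\{|x|>M/\lambda\}}\|_\infty,\qquad
\widehat{\mathcal{A}_\lambda u}(\xi)=\lambda\hat u(\lambda\xi).
\]
Consequently $\|\widehat{\mathcal{A}_\lambda u}\,\mathbb{I}_{\{|\xi|>N\}}\|_{L^1}=\|\hat u\,\mathbb{I}_{\{|\xi|>\lambda N\}}\|_{L^1}$, and $\|\widehat{\mathcal{A}_\lambda u}\|_\infty=\lambda\|\hat u\|_\infty$. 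This last identity accounts for the factor $\lambda$ in the final term. It therefore suffices to prove the estimate for $\lambda=1$.

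For $\lambda=1$ we split $u$ twice. First write $u=u\chi+u(1-\chi)$, with $\chi=\mathbb{I}_{\{|x|\le M\}}$ (or a smooth cutoff). The tail piece is handled by $\|v-\Pi_nv\|_\infty\le(1+\Lambda_n)\|v\|_\infty$, which gives the first term. Next split the compact piece in frequency: $u\chi=w_1+w_2$, where $\hat w_1=\widehat{u\chi}\,\mathbb{I}_{\{|\xi|\le N\}}$. For $w_2$ we use $(1+\Lambda_n)\|w_2\|_\infty\le(1+\Lambda_n)\|\hat w_2\|_{L^1}$. We then compare $\widehat{u\chi}$ with $\hat u$ outside $|\xi|\le N$. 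I would rather split $u$ itself as $u=P_Nu+(u-P_Nu)$, with $P_N$ the Fourier band-limiting to $|\xi|\le N$; then $\|u-P_Nu\|_\infty\le\|\hat u\,\mathbb{I}_{\{|\xi|>N\}}\|_{L^1}$ directly. After that, space-truncate the band-limited part $P_Nu$ using $\|P_Nu\,\mathbb{I}_{\{|x|>M\}}\|_\infty\le\|u\,\mathbb{I}_{\{|x|>M\}}\|_\infty+\|\hat u\,\mathbb{I}_{\{|\xi|>N\}}\|_{L^1}$. After these splittings, every piece except one is bounded by $(1+\Lambda_n)$ times the stated tail norms.

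The remaining piece is $g=P_Nu\cdot\mathbb{I}_{\{|x|\le M\}}$: space-limited to $[-M,M]$ and close to band-limited to $[-N,N]$. We must bound $\|g-\Pi_ng\|_\infty\le\sum_{k>n}|\langle g,\psi_k\rangle|\pi^{-1/4}$. The key step is to estimate $\langle P_Nu\,\mathbb{I}_{[-M,M]},\psi_k\rangle$ for $k>n$. Writing $P_Nu(x)=\frac{1}{2\pi}\int_{-N}^{N}\hat u(\xi)e^{ix\xi}d\xi$ gives
\[
|\langle g,\psi_k\rangle|\le\frac{\|\hat u\|_\infty}{2\pi}\int_{-N}^N\Big|\int_{-M}^M e^{ix\xi}\psi_k(x)dx\Big|d\xi.
\]
The inner integral is the Fourier transform of a truncated Hermite function. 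Since $\hat\psi_k=\sqrt{2\pi}(-i)^k\psi_k$ and, for $k>n$, both $|x|\le M$ and $|\xi|\le N$ lie well inside the oscillatory region $\sqrt{2k+1}$, I would bound it through the generating-function or Cauchy-integral representation $\psi_k(x)\propto\oint e^{-x^2/2+2xt-t^2}t^{-k-1}dt$. Choosing the radius $r$ optimally, the combined exponent becomes $\frac{4\sigma^2+\eta^2}{4}n-\frac{k}{2}\log(\cdots)$. This produces the factor $e^{-\varrho k}$ with $\varrho$ as stated, positive exactly when $4\sigma^2+\eta^2<2$. Summing over $k>n$ together with the $\gamma_k^{-1/2}$ normalisation and Stirling should give $n^{3/4}e^{-\varrho n}\|\hat u\|_\infty$.

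The $e^{-\sigma^2n}\frac{1+\Lambda_n}{\sigma\sqrt n}\|u\|_\infty$ term comes from the boundary layer of the cutoff. Using a Gaussian-type smooth cutoff instead of the sharp indicator (in order to control Fourier leakage), the mismatch with the indicator costs $\int_{|x|>M}e^{-x^2}\sim e^{-M^2}/M$. That mismatch is multiplied by $(1+\Lambda_n)$ and $\|u\|_\infty$. The main obstacle is the saddle-point bound on the inner integral uniformly in $|\xi|\le N$ and $k>n$, which must yield exactly the constant $\varrho$. I expect the cutoff bookkeeping to be routine.
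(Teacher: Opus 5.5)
\textbf{There is a genuine gap in the key estimate.} Your reduction to $\lambda=1$ is correct; the identity $\widehat{\mathcal{A}_\lambda u}(\xi)=\lambda\hat u(\lambda\xi)$ is what accounts for the factor $\lambda$. The successive space/frequency splittings with the $(1+\Lambda_n)$ bookkeeping are also fine and mirror the paper, which passes $u\to uT_M\to(\mathcal{B}_Nu)T_M$. The problem is the step you need exponential decay from.

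With the sharp cutoff, $g=P_Nu\,\mathbb{I}_{\{|x|\le M\}}$ has jumps at $\pm M$, and its Hermite coefficients are not exponentially small. On $[-M,M]$ the function $\psi_k$ ($k>n$) is in its oscillatory regime, with amplitude $\asymp k^{-1/4}$ and local frequency $\sqrt{2k+1-x^2}\gtrsim\sqrt{k}$. Since $x^2+\xi^2\le(\sigma^2+\eta^2)n<2k$, there is no stationary point, so $\int_{-M}^{M}e^{\mathrm{i}x\xi}\psi_k(x)\,dx$ is governed by the endpoint terms, of size $\asymp k^{-3/4}$. Hence $\langle g,\psi_k\rangle$ decays only algebraically, and the series $\sum_{k>n}|\langle g,\psi_k\rangle|$ you need does not converge in general. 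No choice of radius in a Cauchy-integral bound can fix this: bounding $|e^{\mathrm{i}x\xi}|\le1$ under the contour integral discards exactly the cancellation involved. Band-limitation alone does not help either, since $\int_{\mathbb{R}}e^{\mathrm{i}x\xi}\psi_k(x)\,dx=\sqrt{2\pi}\,\mathrm{i}^k\psi_k(\xi)$ has size $k^{-1/4}$ for $|\xi|\le N$. Your remark about a smooth cutoff points the right way, but you never say how the cutoff enters the coefficient computation. Nothing in your setup produces the $4\sigma^2$ in $\varrho$; a cutoff at $\pm M$ would, if anything, suggest $\sigma^2+\eta^2$.

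\textbf{The missing idea} is a cutoff that turns the localized piece into a superposition of coherent states. The paper takes $T_M=\mathbb{I}_{[-2M,2M]}\ast G$ with $G(x)=e^{-x^2/2}/\sqrt{2\pi}$, whose width matches $\psi_0$. Then
\[
(\mathcal{B}_Nu)(x)\,T_M(x)=\frac{1}{2\pi}\int_{\Omega}\hat u(\xi)\,e^{\mathrm{i}\xi x-(x-y)^2/2}\,dy\,d\xi,\qquad \Omega=[-N,N]\times[-2M,2M].
\]
Each wave packet has explicit Hermite coefficients: in the generating function the $t^2$ terms cancel exactly for this width. This gives $|c_k(\xi,y)|=\pi^{1/4}e^{-r^2/4}r^k/\sqrt{2^kk!}$ with $r^2=\xi^2+y^2\le R^2=(4\sigma^2+\eta^2)n$.

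The hypothesis $4\sigma^2+\eta^2<2$ gives $R<\sqrt{2k}$ for all $k>n$. So $r\mapsto r^ke^{-r^2/4}$ is increasing on $[0,R]$, and $|c_k|$ is bounded by its value at $r=R$. Integrating over $|\Omega|\lesssim n$ and summing with Stirling yields $n^{3/4}e^{-R^2/4}(eR^2/2n)^{n/2}=n^{3/4}e^{-\varrho n}$.

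Two corrections to your bookkeeping follow from this:
\begin{itemize}
\item The role of $4\sigma^2+\eta^2<2$ is this monotonicity, not the positivity of $\varrho$. Indeed $\varrho=\tfrac12(t-1-\log t)$ with $t=(4\sigma^2+\eta^2)/2$, which is positive for every $t\neq1$.
\item Because the Gaussian width is forced, the boundary-layer cost is $\sup_{|x|\le M}(1-T_M(x))\le\sqrt{2/\pi}\,e^{-M^2/2}/M$. This is what the paper's own estimate of $\|u-uT_M\|_{L^\infty}$ gives. Obtaining $e^{-M^2}/M$ by narrowing the Gaussian (or widening the plateau) would destroy the clean coefficient formula or enlarge $R$, and so change $\varrho$.
\end{itemize}
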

\begin{proof}
The proof is similar to that of \cite[Theorem~2.4]{Hu2026} and we defer it to Appendix A for clarity of exposition.
\end{proof}

Now we turn to DE- and EF-Hermite approximations in \eqref{def:ScaledDEProj} and \eqref{def:ScaledEFProj} and show that their convergence rate can be significantly improved when suitable scaling factors are chosen. We state the results in the following theorem.
\begin{theorem}\label{thm:ScaledDE}
The following results hold.
\begin{itemize}
\item[\rm(i)] If $f$ is analytic on $\phi_{\mathrm{DE}}(\mathcal{S}_{\rho})$ for some $\rho\in(0,\pi/2)$ and $|f(x)| \leq \mathcal{K} |(x-a)^{\alpha}(b-x)^{\beta}|$ for some $\alpha,\beta>0$ and $x\in I$. When choosing $\lambda=\kappa\sqrt{n}/\log n$ with $\kappa>0$, then
\begin{equation}\label{eq:ScaledDEConv}
\|f - \Pi_{n,\lambda}^{\mathrm{DE}}f\|_{L^{\infty}(I)} \leq \mathcal{K} \exp\left(-\nu n/\log n\right),
\end{equation}
for some $\nu>0$.

\item[\rm(ii)] If $f$ is analytic on $\phi_{\mathrm{EF}}(\mathcal{S}_{\rho})$ for some $\rho\in(0,\rho^{*})$ and $|f(x)|\leq \mathcal{K} |(x-a)^{\alpha}(b-x)^{\beta}|$ for some $\alpha,\beta>0$ and $x\in I$. When choosing $\lambda=\kappa n^{1/6}$ with $\kappa>0$, then
\begin{equation}\label{eq:ScaledEFConv}
\|f - \Pi_{n,\lambda}^{\mathrm{EF}}f\|_{L^{\infty}(I)} \leq \mathcal{K} \exp(-\nu n^{2/3}),
\end{equation}
for some $\nu>0$.
\end{itemize}
\end{theorem}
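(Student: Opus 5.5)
We reduce both statements to Theorem \ref{thm:ScaledHermInf} applied to $g=f\circ\phi$ on the real line. As in the proof of Theorem \ref{thm:ScaledSEConv}, with $x=\phi(s)$ and $g(s)=(f\circ\phi)(s)$ we have $\|f-\Pi_{n,\lambda}f\|_{L^\infty(I)}=\|g-\Pi^{\mathrm{SH}}_{n,\lambda}g\|_{L^\infty(\mathbb{R})}$, where $\phi=\phi_{\mathrm{DE}}$ in (i) and $\phi=\phi_{\mathrm{EF}}$ in (ii).

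We first record what the hypotheses give for $g$. The function $g$ is analytic and bounded in $\mathcal{S}_{\rho'}$ for any $\rho'<\rho$ (boundedness on the closed sub-strip comes from $|f|\le\mathcal{K}|(x-a)^\alpha(b-x)^\beta|$ transplanted to the strip). It also decays: $|g(s)|\lesssim\exp(-c\,e^{|s|})$ in the DE case, by the computation in the proof of Theorem \ref{thm:DEConv}, and $|g(s)|\lesssim e^{-cs^2}$ in the EF case, by the proof of Theorem \ref{thm:EFConv}, with $c=\min\{\alpha,\beta\}\pi/2$ resp.\ $\min\{\alpha,\beta\}$. For the Fourier side we shift the contour to $\Im s=\pm\rho'$ and get $|\hat g(\xi)|\lesssim e^{-\rho'|\xi|}$. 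This requires the decay bound to persist uniformly inside the strip, i.e.\ $|g(s+it)|\lesssim \exp(-c'e^{|s|})$ resp.\ $e^{-c's^2}$ for $|t|\le\rho'$. For DE this holds because $\Re\cosh$-type growth survives for $\rho'<\pi/2$. For EF it holds because $1\mp\mathrm{erf}(s+it)\sim e^{-(s+it)^2}/(\sqrt\pi s)$ uniformly in $|t|\le\rho'$, which gives a Gaussian in $s$. Checking these uniform bounds in the strip is the main technical obstacle. I would use the asymptotics of $\tanh(\frac{\pi}{2}\sinh z)$ and $\mathrm{erfc}$ in a sector around the real axis, combined with the endpoint behaviour $|f(x)|\lesssim|x-a|^\alpha$. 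If necessary I would strengthen this to a hypothesis on the complex neighbourhood, as is customary in the sinc literature.

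With these bounds we fix $\sigma,\eta$ with $4\sigma^2+\eta^2<2$, so that $\varrho>0$, and insert $M=\sigma\sqrt n$, $N=\eta\sqrt n$ into Theorem \ref{thm:ScaledHermInf}. The spatial tail is $\|g\,\mathbb{I}_{\{|s|>M/\lambda\}}\|_\infty$ and the Fourier tail is $\int_{|\xi|>\lambda N}|\hat g|\lesssim e^{-\rho'\lambda\eta\sqrt n}$. The remaining terms are $e^{-\sigma^2 n}$ and $\lambda n^{3/4}e^{-\varrho n}$, each multiplied by $(1+\Lambda_n)$. For $\Lambda_n$ we use a polynomial bound, $\Lambda_n\lesssim n^{1/2}$, which follows from $|\psi_k|\le\pi^{-1/4}$ together with the standard Lebesgue-constant estimate for Hermite functions. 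This polynomial factor is absorbed by adjusting $\nu$.

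For (i) we take $\lambda=\kappa\sqrt n/\log n$. Then $M/\lambda=(\sigma/\kappa)\log n$, so the spatial tail is $\exp(-c\,n^{\sigma/\kappa})$, and the Fourier tail is $\exp(-\rho'\kappa\eta\, n/\log n)$. The remaining terms are $O(e^{-\min\{\sigma^2,\varrho\}n}\mathrm{poly}(n))$. The spatial term is at least as small as $e^{-\nu n/\log n}$ provided $\sigma/\kappa\ge1$. If $\kappa$ is too large for the admissible $\sigma$, we still get $\exp(-cn^{\sigma/\kappa})$ times an exponential of $n/\log n$ type. In that case I would fix $\sigma$ and argue that the bound holds with some smaller $\nu>0$ when $\kappa$ is small enough, and note that the double-exponential decay handles moderate $\kappa$.

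For (ii) we take $\lambda=\kappa n^{1/6}$. Then $M/\lambda=(\sigma/\kappa)n^{1/3}$, so the spatial tail is $e^{-c(\sigma/\kappa)^2n^{2/3}}$, and the Fourier tail is $e^{-\rho'\kappa\eta n^{2/3}}$. Both are $e^{-\nu n^{2/3}}$, and the last two terms are exponentially smaller. The exponent $1/6$ is exactly the one that balances $(n^{1/2}/\lambda)^2$ against $\lambda n^{1/2}$.
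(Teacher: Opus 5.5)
Your route is the paper's. You transplant to $g=f\circ\phi$ and get $|\hat g(\xi)|\lesssim e^{-\rho'|\xi|}$ from analyticity in the strip (you shift the contour where the paper quotes a lemma from the literature). You then insert $M=\sigma\sqrt n$, $N=\eta\sqrt n$ into Theorem~\ref{thm:ScaledHermInf} and compare the spatial and Fourier tails. You are more careful than the paper about the factor $(1+\Lambda_n)$ and the two remainder terms. Your part (ii) is complete for every $\kappa>0$, since both tails are $e^{-cn^{2/3}}$ with $c$ depending on $\kappa$. You are also right that uniform decay inside the strip needs the bound on $f$ on the complex region $\phi(\mathcal{S}_\rho)$, not only on $I$. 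This strengthening is genuinely required, because a bound on the real interval says nothing about $g$ on $\Im s=\pm\rho'$; the paper's proof uses it tacitly.

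The gap is in (i), exactly where you hesitate. With $M/\lambda=(\sigma/\kappa)\log n$ the spatial tail is $\exp(-c\,n^{\sigma/\kappa})$, which is $O(e^{-\nu n/\log n})$ only when $\sigma\ge\kappa$. Theorem~\ref{thm:ScaledHermInf} requires $4\sigma^2+\eta^2<2$ with $\eta>0$, so your argument proves (i) only for $\kappa<1/\sqrt2$. The remark that ``the double-exponential decay handles moderate $\kappa$'' does not fill this: the double-exponential decay is precisely what yields $\exp(-cn^{\sigma/\kappa})$, which is only stretched-exponential when $\sigma/\kappa<1$.

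In fact no argument can give (i) for all $\kappa>0$. Take $f(x)=x^\alpha(1-x)^\beta$ on $(0,1)$ and $\kappa>\sqrt2$. Choose $\epsilon>0$ so small that $\gamma:=\sqrt2(1+\epsilon)/\kappa<1$, and let $s_0$ satisfy $\lambda s_0=\sqrt{2n+1}\,(1+\epsilon)$. This point lies a fixed fraction beyond every turning point $\sqrt{2k+1}$, $k\le n$, so each such $\psi_k$ is $O(e^{-c(\epsilon)n})$ there. Together with $|c_k^\lambda|\le\lambda\pi^{-1/4}\|g\|_{L^1}$, this makes $(\Pi^{\mathrm{SH}}_{n,\lambda}g)(s_0)$ exponentially small in $n$. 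On the other hand, $1-\phi_{\mathrm{DE}}(s)\ge\frac12e^{-\frac{\pi}{2}e^{s}}$ and $e^{s_0}\le 2n^{\gamma}$ for large $n$ give $g(s_0)\ge c\,e^{-\beta\pi n^{\gamma}}$. Hence the error is at least of order $e^{-\beta\pi n^{\gamma}}$, which is not $O(e^{-\nu n/\log n})$ for any $\nu>0$.

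So (i) has to be restricted. With Theorem~\ref{thm:ScaledHermInf} as stated you get $\kappa<1/\sqrt2$. You can reach $\kappa<\sqrt2$ if, in the proof of that theorem, you place the spatial threshold at $2M-\delta\sqrt n$ rather than $M$; the cut-off $T_M$ is still within $O(e^{-\delta^2 n/2})$ of $1$ there. The paper's own proof has the same blind spot. Its balancing produces $\lambda\approx\sigma\sqrt n/\log n$, i.e.\ implicitly $\kappa\approx\sigma<1/\sqrt2$, and it then asserts the bound for arbitrary $\kappa$.
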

\begin{proof}
We first consider the case of $\Pi_{n,\lambda}^{\mathrm{DE}}f$.
Let $x=\phi_{\mathrm{DE}}(s)$ and let $F_D(s)=(f\circ\phi_{\mathrm{DE}})(s)$ with $s\in(-\infty,\infty)$. It is easily checked that $(\Pi_{n,\lambda}^{\mathrm{DE}}f)(x) = (\Pi_{n,\lambda}^{\mathrm{SH}}F_D)(s)$, and thus
\begin{equation}
\|f - \Pi_{n,\lambda}^{\mathrm{DE}}f\|_{L^{\infty}(I)} = \|F_D - \Pi_{n,\lambda}^{\mathrm{SH}}F_D\|_{L^{\infty}(\mathbb{R})}. \nonumber
\end{equation}
Since $f$ is analytic on $\phi_{\mathrm{DE}}(\mathcal{S}_{\rho})$ and $|f(x)|\leq\mathcal{K}|(x-a)^{\alpha}(b-x)^{\beta}|$ for $x\in{I}$, it follows that $F_D(s)$ is analytic on the strip $\mathcal{S}_{\rho}$ and for $x+\mathrm{i}y\in\mathcal{S}_{\rho}$,
\begin{align}
|F_D(x+\mathrm{i}y)| = O\left(\exp\left(-\frac{\tau\pi}{2} e^{|x|}\cos(y)\right)\right), \nonumber
\end{align}
as $x\rightarrow\pm\infty$, where $\tau=\alpha$ when $x\rightarrow-\infty$ and $\tau=\beta$ when $x\rightarrow\infty$. Hence we see that $|F_D(x+\mathrm{i}y)|$ decays double-exponentially as $x\rightarrow\pm\infty$, and thus
\[
\int_{\partial\mathcal{S}_{\rho}} |F_D(z)| |dz| < \infty,  \qquad  \lim_{x\rightarrow\pm\infty} \sup_{|y|\leq\rho} |F_D(x+\mathrm{i}y)| = 0.
\]
By \cite[Lemma~4.7]{Comte2025} we know that $\mathcal{F}[F_D](\xi)$ decays at the exponential rate $O(\exp(-\rho|\xi|))$ as $|\xi|\rightarrow\infty$. According to Theorem \ref{thm:ScaledHermInf}, the optimal scaling factor can be derived by balancing the decay rates of $\|F_D(s)\cdot\mathbb{I}_{\{|s|>M/\lambda\}}\|_{L^{\infty}(\mathbb{R})}$ and $\|\mathcal{F}[F_D](\xi) \cdot \mathbb{I}_{\{|\xi|>\lambda N\}} \|_{L^{1}(\mathbb{R})}$, we obtain
\begin{align}
\exp\left(-\frac{\tau\pi}{2} e^{M/\lambda}\right) = \exp(-\rho\lambda{N}) , \nonumber
\end{align}
which gives
\[
\lambda = M \left( W\left(\frac{2\rho{M}{N}}{\tau\pi} \right) \right)^{-1} = \sigma\sqrt{n} \left( W\left(\frac{2\rho\sigma\eta n}{\tau\pi}\right) \right)^{-1},
\]
where $W(z)$ is the Lambert $W$-function. Since $W(x)\sim\log x$ as $x\rightarrow\infty$, we deduce that the optimal scaling factor asymptotes to $\lambda=O(\sqrt{n}/\log n)$ as $n\rightarrow\infty$. In this case, the error of $\Pi_{n,\lambda}^{\mathrm{DE}}f$ decays at the almost-exponential rate $O(\exp(-\nu n/\log n))$ for some $\nu>0$ as $n\rightarrow\infty$. This proves (i).

As for the case of $\Pi_{n,\lambda}^{\mathrm{EF}}f$, we proceed in a similar way. Let $x=\phi_{\mathrm{EF}}(s)$ and let $F_E(s)=(f\circ\phi_{\mathrm{EF}})(s)$, then we have $(\Pi_{n,\lambda}^{\mathrm{EF}}f)(x) = (\Pi_{n,\lambda}^{\mathrm{SH}}F_E)(s)$ and
\[
\|f - \Pi_{n,\lambda}^{\mathrm{EF}}f\|_{L^{\infty}(I)} = \|F_E - \Pi_{n,\lambda}^{\mathrm{SH}}F_E\|_{L^{\infty}(\mathbb{R})}.
\]
Since $f$ is analytic on $\phi_{\mathrm{EF}}(\mathcal{S}_{\rho})$ and $|f(x)|\leq\mathcal{K}|(x-a)^{\alpha}(b-x)^{\beta}|$ for $x\in{I}$, it follows that $F_E(z)$ is analytic on the strip $\mathcal{S}_{\rho}$ and for $x+\mathrm{i}y\in\mathcal{S}_{\rho}$,
\begin{align}
|F_E(x+\mathrm{i}y)| = O\left(\frac{\exp(-\tau(x^2-y^2))}{(x^2+y^2)^{\tau/2}}\right), \nonumber
\end{align}
as $x\rightarrow\pm\infty$, where $\tau=\alpha$ when $x\rightarrow-\infty$ and $\tau=\beta$ when $x\rightarrow\infty$. Hence we see that $|F_E(x+\mathrm{i}y)|$ has Gaussian decay as $x\rightarrow\pm\infty$, and thus
\[
\int_{\partial\mathcal{S}_{\rho}} |F_E(z)| |dz| < \infty,  \qquad  \lim_{x\rightarrow\pm\infty} \sup_{|y|\leq\rho} |F_E(x+\mathrm{i}y)| = 0.
\]
By \cite[Lemma~4.7]{Comte2025} again we know that $\mathcal{F}[F_E](\xi)$ decays at the exponential rate $O(\exp(-\rho|\xi|))$ as $|\xi|\rightarrow\infty$. Moreover, using Theorem \ref{thm:ScaledHermInf} again and balancing the decay rates of $\|F_E(s)\cdot\mathbb{I}_{\{|s|>M/\lambda\}}\|_{L^{\infty}(\mathbb{R})}$ and $\|\mathcal{F}[F_E](\xi) \cdot \mathbb{I}_{\{|\xi|>\lambda{N}\}} \|_{L^{1}(\mathbb{R})}$ we obtain
\begin{align}
\exp\left(-\frac{\tau M^2}{\lambda^2}\right) = \exp(-\rho\lambda N),  \nonumber
\end{align}
which gives
\[
\lambda = \left(\frac{\tau{M}^2}{\rho{N}}\right)^{1/3} = \left(\frac{\tau\sigma^2}{\rho\eta} \right)^{1/3} n^{1/6} .
\]
Thus the optimal scaling factor satisfies $\lambda=O(n^{1/6})$ as $n\rightarrow\infty$. In this case, the error of $\Pi_{n,\lambda}^{\mathrm{EF}}f$ decays at the rate $O(\exp(-\nu n^{2/3}))$ for some $\nu>0$ as $n\rightarrow\infty$. This ends the proof.
\end{proof}

\begin{figure}[htbp]
\centering
\includegraphics[width=0.49\textwidth,height=0.42\textwidth]{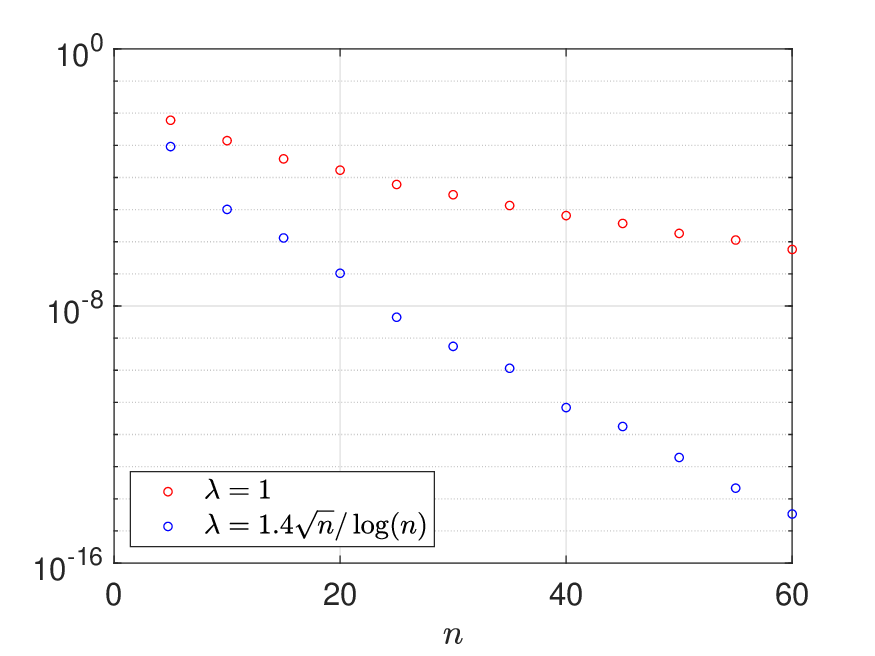}
\includegraphics[width=0.49\textwidth,height=0.42\textwidth]{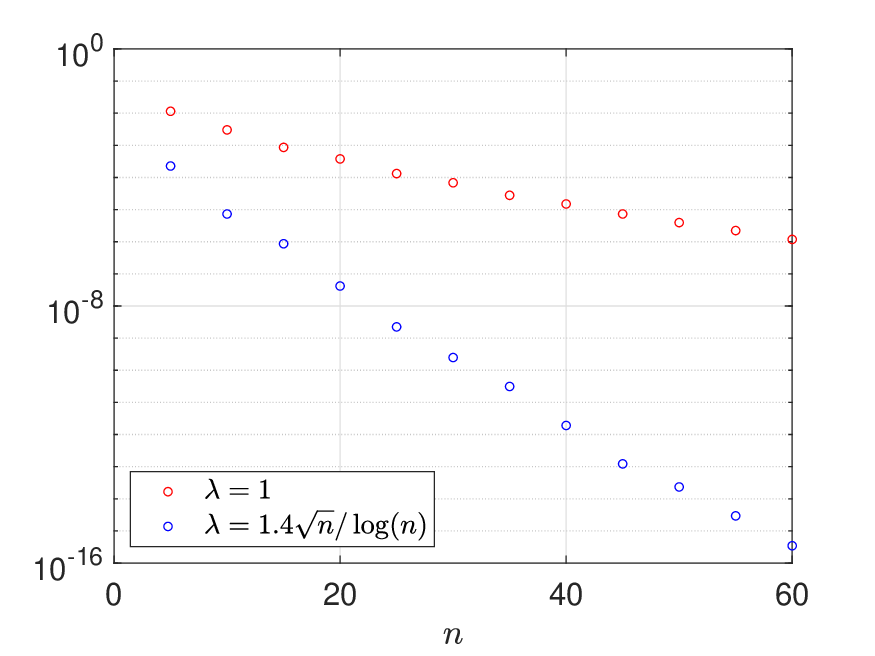}\\
\includegraphics[width=0.49\textwidth,height=0.42\textwidth]{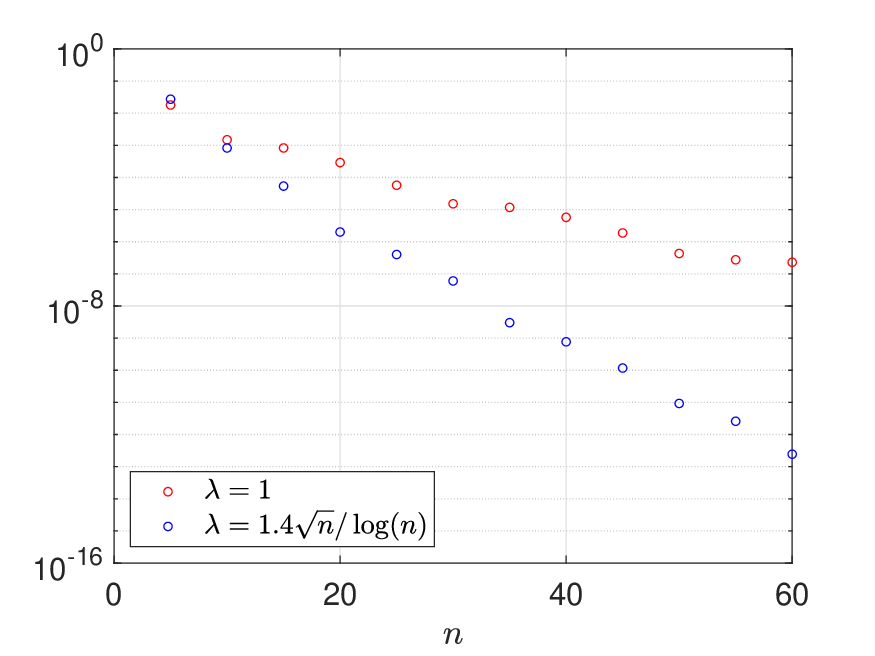}
\includegraphics[width=0.49\textwidth,height=0.42\textwidth]{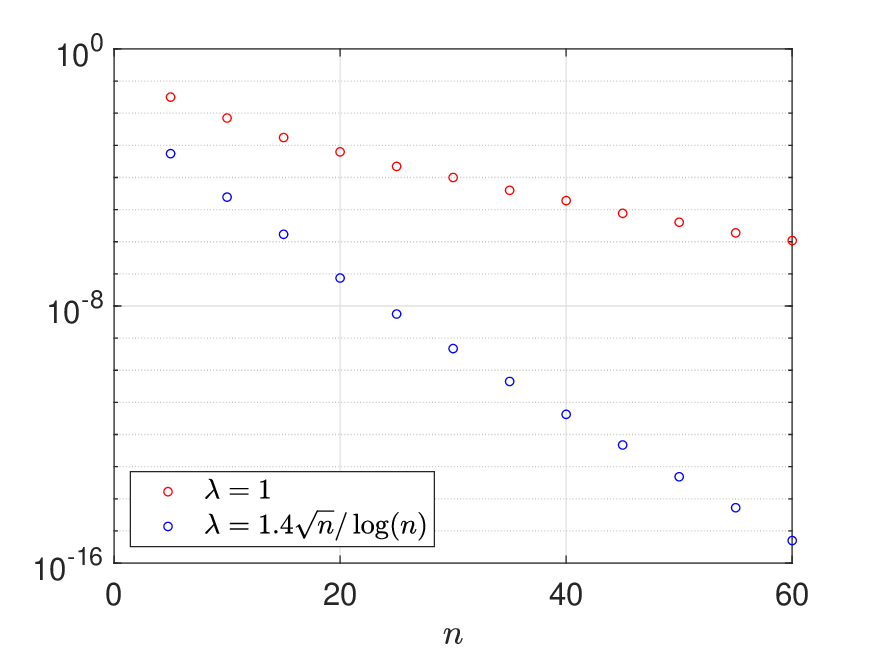}
\caption{Top: Maximum errors of $\Pi_{n,\lambda}^{\mathrm{DE}}f$ as a function of $n$ for $f(x) = x^{\alpha}(1-x)^{\beta}g(x)$ with $g(x)=1/(x+2)$ and $(\alpha,\beta)=(1/2,1/2)$ (left) and $(\alpha,\beta)=(2/3,1)$ (right). Bottom: Maximum errors of $\Pi_{n,\lambda}^{\mathrm{DE}}f$ as a function of $n$ for $f(x) = x^{\alpha}\log^{\beta}(x)$ with $(\alpha,\beta)=(1/2,1)$ (left) and $(\alpha,\beta)=(1,1)$ (right).
}\label{fig:Exam5}
\end{figure}

To illustrate the performance of the scaling factor derived in Theorem \ref{thm:ScaledDE}, we plot in Figure \ref{fig:Exam5} the maximum errors of $\Pi_{n,\lambda}^{\mathrm{DE}}f$ as a function of $n$ for $f(x) = x^{\alpha}(1-x)^{\beta}g(x)$ with $g(x)=1/(x+2)$ and $(\alpha,\beta)=(1/2,1/2)$ and $(\alpha,\beta)=(2/3,1)$. The scaling factor is chosen as $\lambda=\kappa\sqrt{n}/\log n$ with $\kappa=1.4$. For comparison, we also add the maximum errors of the approximation $\Pi_{n}^{\mathrm{DE}}f$ (i.e., $\Pi_{n,\lambda}^{\mathrm{DE}}f$ with $\lambda=1$). As expected, we see that the maximum errors of $\Pi_{n,\lambda}^{\mathrm{DE}}f$ with $\lambda=\kappa\sqrt{n}/\log n$ decay at almost-exponential rate, which is much faster than that of $\Pi_{n}^{\mathrm{DE}}f$.

Finally, we consider a comparison of DE-Hermite and DE-Sinc approximations. The DE-Sinc approximation is defined by
\begin{align}\label{def:DESinc}
(\mathcal{T}_{2n}^{\mathrm{DE}}f)(x) &= \sum_{j=-n}^{n} f(\phi_{\mathrm{DE}}(jh)) S(j,h)(\varphi_{\mathrm{DE}}(x)).
\end{align}
When $f$ is analytic and bounded on $\phi_{\mathrm{DE}}(\mathcal{S}_{\rho})$ for some $\rho\in(0,\pi/2)$, and $|f(x)|\leq \mathcal{K}|(x-a)^{\alpha}(b-x)^{\alpha}|$ for $x\in I$, then from \cite[Theorem~4.1]{Tanaka2009} we know that
\begin{align}\label{eq:DESincBound}
\|f - \mathcal{T}_{2n}^{\mathrm{DE}}f \|_{L^{\infty}(I)} \leq \mathcal{K} \exp\left(-\frac{\pi\rho n}{\log(2\rho n/\alpha)}\right),
\end{align}
where $h=\log(2\rho n/\alpha)/n$. Clearly, both DE-Sinc and DE-Hermite approximations converge at almost-exponential rate. 

Now we compare $\mathcal{T}_{2n}^{\mathrm{DE}}f$ and $\Pi_{2n,\lambda}^{\mathrm{DE}}f$ and both approximations have $2n+1$ terms. In the top row of Figure \ref{fig:Exam6} we plot the maximum errors of $\mathcal{T}_{2n}^{\mathrm{DE}}f$ and $\Pi_{2n,\lambda}^{\mathrm{DE}}f$ as a function of $n$ for $f(x)=x^{\alpha}(1-x)^{\beta}g(x)$ with $(\alpha,\beta)=(\pi/10,\pi/10)$ and $(\alpha,\beta)=(1/2,1/2)$ and $g(x)=1/(x+2)$. The step size in $\mathcal{T}_{2n}^{\mathrm{DE}}f$ is chosen as $h=\log(\pi n/\alpha)/n$ and the scaling factor in $\Pi_{2n,\lambda}^{\mathrm{DE}}f$ is chosen as $\lambda=\kappa\sqrt{2n}/\log(2n)$ with $\kappa=1.2$. We see that the convergence rate of $\Pi_{2n,\lambda}^{\mathrm{DE}}f$ is much faster than that of $\mathcal{T}_{2n}^{\mathrm{DE}}f$. In the bottom row of Figure \ref{fig:Exam6} we plot the maximum errors of $\mathcal{T}_{2n}^{\mathrm{DE}}f$ and $\Pi_{2n,\lambda}^{\mathrm{DE}}f$ as a function of $n$ for $f(x)=x^{\alpha}\log^{\beta}(x)$ with $(\alpha,\beta)=(1,1)$ and $(\alpha,\beta)=(2,2)$. The step size in $\mathcal{T}_{2n}^{\mathrm{DE}}f$ is still chosen as $h=\log(\pi n/\alpha)/n$ and the scaling factor in $\Pi_{2n,\lambda}^{\mathrm{DE}}f$ is chosen as $\lambda=\kappa\sqrt{2n}/\log(2n)$ with $\kappa=1.5$. We also see that the convergence rate of $\Pi_{2n,\lambda}^{\mathrm{DE}}f$ is much faster than that of $\mathcal{T}_{2n}^{\mathrm{DE}}f$.

\begin{figure}[htbp]
\centering
\includegraphics[width=0.49\textwidth,height=0.42\textwidth]{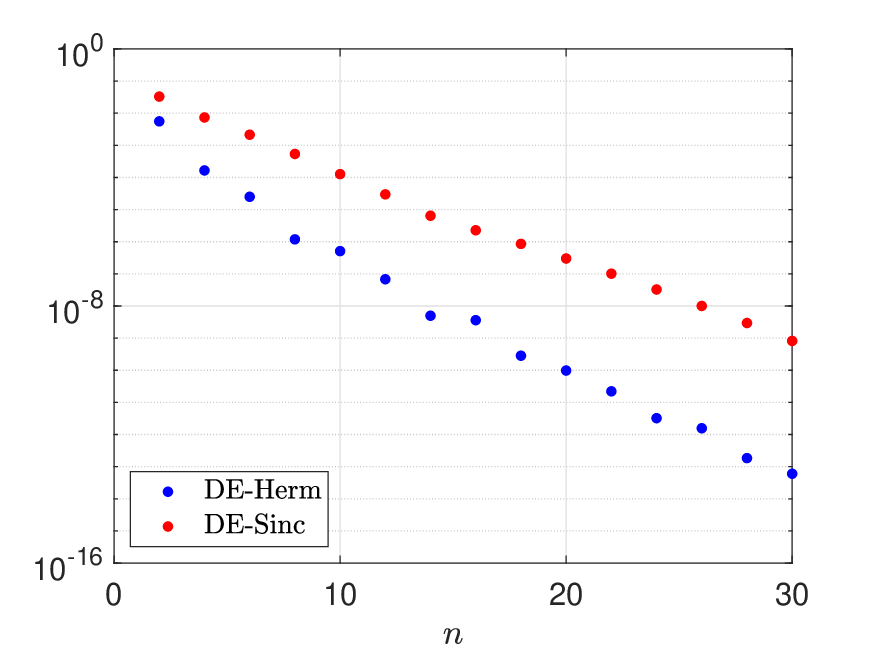}
\includegraphics[width=0.49\textwidth,height=0.42\textwidth]{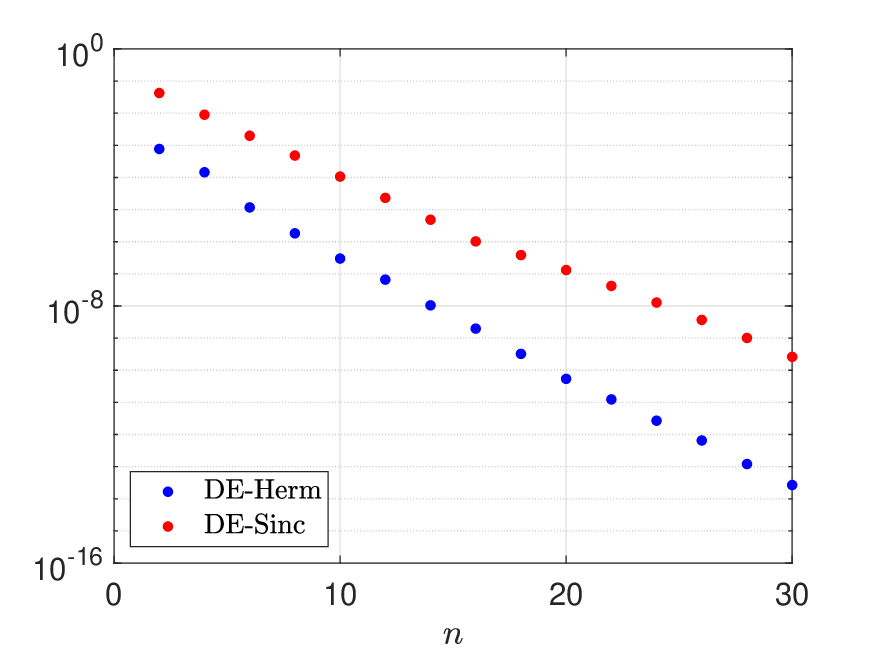}\\
\includegraphics[width=0.49\textwidth,height=0.42\textwidth]{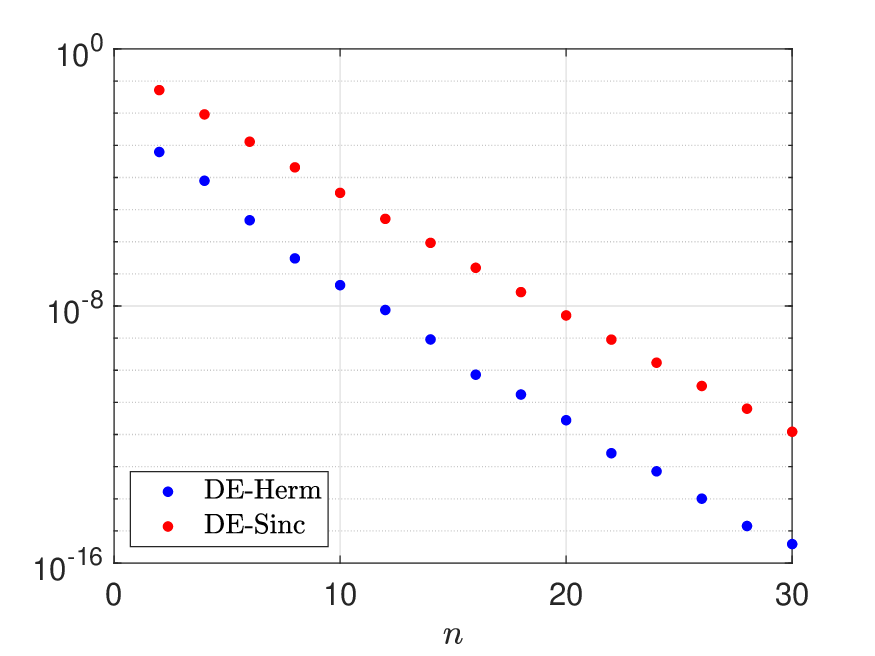}
\includegraphics[width=0.49\textwidth,height=0.42\textwidth]{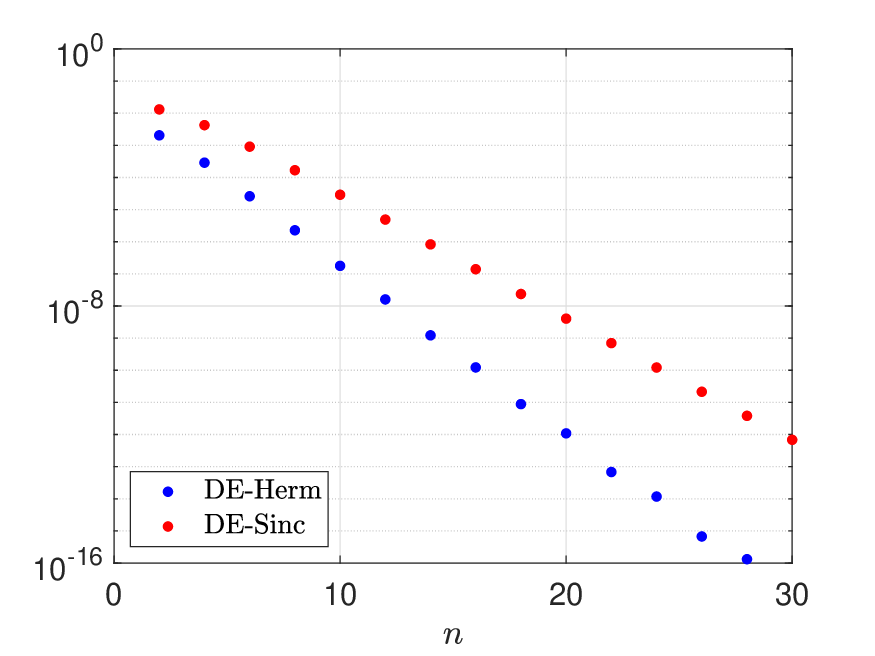}
\caption{Top: Maximum errors of $\Pi_{2n,\lambda}^{\mathrm{DE}}f$ (DE-Herm) and $\mathcal{T}_{2n}^{\mathrm{DE}}f$ (DE-Sinc) as a function of $n$ for $f(x)=x^{\alpha}(1-x)^{\beta}/(x+2)$ with $(\alpha,\beta)=(\pi/10,\pi/10)$ (left) and $(\alpha,\beta)=(1/2,3/4)$ (right). The scaling factor of $\Pi_{2n,\lambda}^{\mathrm{DE}}f$ is $\lambda=\kappa\sqrt{2n}/\log(2n)$ with $\kappa=1.2$. Bottom: Maximum errors of $\Pi_{2n,\lambda}^{\mathrm{DE}}f$ (DE-Herm) and $\mathcal{T}_{2n}^{\mathrm{DE}}f$ (DE-Sinc) as a function of $n$ for $f(x)=x^{\alpha}\log^{\beta}(x)$ with $(\alpha,\beta)=(1,1)$ (left) and $(\alpha,\beta)=(2,2)$ (right). The scaling factor of $\Pi_{2n,\lambda}^{\mathrm{DE}}f$ is $\lambda=\kappa\sqrt{2n}/\log(2n)$ with $\kappa=1.5$.}\label{fig:Exam6}
\end{figure}

%
%
%

\section{Hermite spectral approximation by interpolation}\label{sec:Interp}
In this section we develop an interpolation analog method of those projection approximations in section \ref{sec:ScaledHerm}. Let $\phi$ denote one of the SE, DE and EF transforms and let $\{x_j\}_{j=0}^{n}$ be a set of points in $I$ defined by $x_j=\phi(s_j/\lambda)$ and $\{s_j\}_{j=0}^{n}$ are the zeros of $\psi_{n+1}(x)$. The Hermite spectral interpolation is achieved by finding $I_{n,\lambda}f\in\mathbb{M}_{n}^{\lambda}$ such that
\begin{equation}\label{eq:ScaInt}
(I_{n,\lambda}f)(x_j) = f(x_j), \quad j=0,\ldots,n.
\end{equation}
Note that $(I_{n,\lambda}f)(\phi(s))$ is also the function which belongs to the space $\mathbb{H}_n^{\lambda}$ and interpolates $(f\circ\phi)(s)$ at the points $\{\varphi(x_j)\}_{j=0}^{n}$, and thus $e^{(\lambda s)^2/2}(I_{n,\lambda}f)(\phi(s))$ is the polynomial of degree $n$ which interpolates $e^{(\lambda s)^2/2}(f\circ\phi)(s)$ at the points $\{\varphi(x_j)\}_{j=0}^{n}$. By Lagrange interpolation formula, we immediately obtain
\begin{align}\label{eq:ScaIntLag}
(I_{n,\lambda}f)(x) = \sum_{j=0}^{n} f(x_j) L_j(x), \quad   L_j(x) = \frac{\psi_{n+1}(\lambda \varphi(x))}{\lambda (\varphi(x) - \varphi(x_j)) \psi'_{n+1}(s_j) }.
\end{align}
Below we show a connection between the error bounds of $I_{n,\lambda}f$ and $\Pi_{n,\lambda}f$ in $L^{\infty}$-norm.
\begin{lemma}\label{lem:ScaInt}
Let $I_{n,\lambda}f$ and $\Pi_{n,\lambda}f$ be defined in \eqref{eq:ScaInt} and \eqref{def:ScaledHermProj}, respectively. Then, it holds
\begin{align}
\|f - I_{n,\lambda}f\|_{L^{\infty}(I)} &\leq (1 + \Lambda_n^{\mathrm{Int}}) \|f - \Pi_{n,\lambda}f\|_{L^{\infty}(I)},
\end{align}
where $\Lambda_n^{\mathrm{Int}}$ is the Lebesgue constant of the interpolation operator $I_{n,\lambda}$, i.e.,
\[
\Lambda_n^{\mathrm{Int}} = \sup_{x\in I} \sum_{j=0}^{n} |L_j(x)|.
\]
\end{lemma}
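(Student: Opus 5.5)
The argument is the standard Lebesgue-constant one. The key fact is that $I_{n,\lambda}$ reproduces every element of $\mathbb{M}_n^{\lambda}$. Once that is in hand, the estimate follows by inserting the projection and using the triangle inequality.

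\emph{Step 1 (reproduction).} Take any $q\in\mathbb{M}_n^{\lambda}$. Then $q(\phi(s))=e^{-(\lambda s)^2/2}P(s)$ for some polynomial $P$ of degree at most $n$. The interpolant $I_{n,\lambda}q$ also lies in $\mathbb{M}_n^{\lambda}$ and matches $q$ at the $n+1$ distinct nodes $x_j$. After multiplying by $e^{(\lambda s)^2/2}$, the difference $q-I_{n,\lambda}q$ becomes a polynomial of degree at most $n$ in $s$ with $n+1$ distinct zeros $\varphi(x_j)=s_j/\lambda$. Here distinctness holds because the zeros of $\psi_{n+1}$ are simple and $\phi$ is a bijection. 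Such a polynomial must vanish identically, so $I_{n,\lambda}q=q$. Equivalently, by the Lagrange representation \eqref{eq:ScaIntLag}, $\sum_j q(x_j)L_j=q$.

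\emph{Step 2 (splitting).} Set $q=\Pi_{n,\lambda}f\in\mathbb{M}_n^{\lambda}$. By linearity of $I_{n,\lambda}$ and Step 1,
\[
f-I_{n,\lambda}f=(f-q)-I_{n,\lambda}(f-q).
\]

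\emph{Step 3 (bounding the interpolation term).} From \eqref{eq:ScaIntLag}, for every $x\in I$,
\[
|I_{n,\lambda}(f-q)(x)|\le\sum_{j}|f(x_j)-q(x_j)|\,|L_j(x)|\le\Lambda_n^{\mathrm{Int}}\|f-q\|_{L^\infty(I)}.
\]
Taking the supremum over $x$ and using the triangle inequality gives
\[
\|f-I_{n,\lambda}f\|_{L^\infty(I)}\le(1+\Lambda_n^{\mathrm{Int}})\,\|f-\Pi_{n,\lambda}f\|_{L^\infty(I)}.
\]

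There is no real obstacle here. The only point needing care is Step 1: the reproduction property must hold on the whole space $\mathbb{M}_n^{\lambda}$, and this relies on the distinctness of the nodes and on the weight $e^{-(\lambda s)^2/2}$ being the same for every basis function.
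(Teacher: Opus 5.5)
Your proof is correct and follows the same route as the paper. Both insert $\Pi_{n,\lambda}f$, use that $I_{n,\lambda}$ reproduces $\mathbb{M}_n^{\lambda}$, and bound $\|I_{n,\lambda}(f-\Pi_{n,\lambda}f)\|_{L^\infty(I)}$ by $\Lambda_n^{\mathrm{Int}}\|f-\Pi_{n,\lambda}f\|_{L^\infty(I)}$ via the Lagrange form. Your Step 1 also justifies the reproduction property (weighted-polynomial structure plus distinct nodes), which the paper only asserts.
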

\begin{proof}
By \eqref{eq:ScaIntLag} we know that  $\|I_{n,\lambda}f\|_{L^{\infty}(I)}\leq\Lambda_n^{\mathrm{Int}}\|f\|_{L^{\infty}(I)}$. On the other hand, note that $I_{n,\lambda}f\equiv f$ for $f\in\mathbb{Q}_{n,\lambda}$, we have
\begin{align}
\|f - I_{n,\lambda}f \|_{L^{\infty}(I)} &\leq \|f - \Pi_{n,\lambda}f \|_{L^{\infty}(I)} +
\|\Pi_{n,\lambda}f - I_{n,\lambda}f \|_{L^{\infty}(I)} \nonumber \\
&= \|f - \Pi_{n,\lambda}f \|_{L^{\infty}(I)} + \|I_{n,\lambda}(\Pi_{n,\lambda}f - f) \|_{L^{\infty}(I)} \nonumber \\
&\leq (1 + \Lambda_n^{\mathrm{Int}}) \|f - \Pi_{n,\lambda}f \|_{L^{\infty}(I)}. \nonumber
\end{align}
This ends the proof.
\end{proof}

As a direct consequence, we immediately derive the following results.
\begin{theorem}
Let $I_{n,\lambda}^{\mathrm{SE}}f$, $I_{n,\lambda}^{\mathrm{DE}}f$ and $I_{n,\lambda}^{\mathrm{EF}}f$ denote, respectively, the interpolation \eqref{eq:ScaInt} with $\phi=\phi_{\mathrm{SE}}$, $\phi=\phi_{\mathrm{DE}}$ and $\phi=\phi_{\mathrm{EF}}$. Then, the following results hold.
\begin{itemize}
\item[\rm(i)] If $f$ is analytic in $\phi_{\mathrm{SE}}(\mathcal{S}_{\rho})$ for some $\rho\in(0,\pi)$ and $|f(x)|\leq \mathcal{K} |(x-a)^{\alpha}(b-x)^{\beta}|$ for some $\alpha,\beta>0$ and $x\in I$. When choosing $\lambda=\sqrt{\tau/\rho}$ with $\tau=\min\{\alpha,\beta\}$, then
\begin{equation}\label{eq:ScaIntSEConv}
\|f - I_{n,\lambda}^{\mathrm{SE}}f\|_{L^{\infty}(I)} \leq \mathcal{K} n^{2/3} \exp(-\nu\sqrt{n}),
\end{equation}
where $\nu=\sqrt{2\tau\rho}$.

\item[\rm(ii)] If $f$ is analytic on $\phi_{\mathrm{DE}}(\mathcal{S}_{\rho})$ for some $\rho\in(0,\pi/2)$ and $|f(x)| \leq \mathcal{K} |(x-a)^{\alpha}(b-x)^{\beta}|$ for some $\alpha,\beta>0$ and $x\in{I}$. When choosing $\lambda=\kappa\sqrt{n}/\log n$ with $\kappa>0$, then
\begin{equation}\label{eq:ScaIntDEConv}
\|f - \Pi_{n,\lambda}^{\mathrm{DE}}f\|_{L^{\infty}(I)} \leq \mathcal{K} n^{1/6} \exp\left(-\nu n/\log n\right),
\end{equation}
for some $\nu>0$.

\item[\rm(iii)] If $f$ is analytic on $\phi_{\mathrm{EF}}(\mathcal{S}_{\rho})$ for some $\rho\in(0,\rho^{*})$ and $|f(x)|\leq \mathcal{K} |(x-a)^{\alpha}(b-x)^{\beta}|$ for some $\alpha,\beta>0$ and $x\in{I}$. When choosing $\lambda=\kappa n^{1/6}$ with $\kappa>0$, then
\begin{equation}\label{eq:ScaIntEFConv}
\|f - \Pi_{n,\lambda}^{\mathrm{EF}}f\|_{L^{\infty}(I)} \leq \mathcal{K} n^{1/6} \exp(-\nu n^{2/3}),
\end{equation}
for some $\nu>0$.
\end{itemize}
\end{theorem}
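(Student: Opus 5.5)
The plan is to combine Lemma \ref{lem:ScaInt} with the projection bounds already proved: Theorem \ref{thm:ScaledSEConv} for (i), and Theorem \ref{thm:ScaledDE} for (ii) and (iii). (In (ii) and (iii) the left-hand sides are evidently meant to be the interpolation errors $\|f-I_{n,\lambda}^{\mathrm{DE}}f\|$ and $\|f-I_{n,\lambda}^{\mathrm{EF}}f\|$, since the projection bounds are already stronger.) The only new ingredient is a growth bound for $\Lambda_n^{\mathrm{Int}}$. With $s=\lambda\varphi(x)$ and $s_j$ the zeros of $\psi_{n+1}$, we have
\[
L_j(x)=\frac{\psi_{n+1}(s)}{(s-s_j)\psi'_{n+1}(s_j)} .
\]
This is exactly the Lagrange basis of Hermite-function interpolation on $\mathbb{R}$ at the zeros of $\psi_{n+1}$. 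Hence $\Lambda_n^{\mathrm{Int}}$ does not depend on $\lambda$ or on the transform $\phi$, and equals the Lebesgue constant of Hermite-function interpolation on the real line.

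First I would establish, or cite from the literature on weighted Lagrange interpolation at Hermite zeros (e.g.\ Szabados / Mastroianni-type estimates), that
\[
\Lambda_n^{\mathrm{Int}}\lesssim n^{1/6}.
\]
To prove it directly, I would use three ingredients:
\begin{itemize}
\item the uniform bound $|\psi_{n+1}(s)|\lesssim n^{-1/12}$, with the sharper estimate $|\psi_{n+1}(s)|\lesssim n^{-1/4}(1-s^2/(2n))^{-1/4}$ in the bulk and Airy-type behaviour near $\pm\sqrt{2n}$;
\item the Christoffel-type identity $1/|\psi'_{n+1}(s_j)| \asymp \Delta s_j\,|\psi_n(s_j)|^{-1}\cdots$, or equivalently $|\psi'_{n+1}(s_j)|^{-1}\asymp n^{-1/4}(2n-s_j^2)^{-1/4}$;
\item the spacing of zeros, $\Delta s_j\asymp (2n-s_j^2)^{-1/2}$ in the bulk and $\asymp n^{-1/6}$ at the edge.
\end{itemize}
Summing $|L_j(x)|$ by splitting into the nearest node (contributing $O(1)$) and the remaining nodes, the bulk gives a harmonic-type sum of size $O(\log n)$. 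The edge region $|s|\approx\sqrt{2n}$, and especially $|s|>s_{\max}$ where $\psi_{n+1}$ is not small relative to the Airy-scaled weights, produces the dominant factor $n^{1/6}$.

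With $\Lambda_n^{\mathrm{Int}}\lesssim n^{1/6}$ in hand, Lemma \ref{lem:ScaInt} gives the three items:
\begin{itemize}
\item for (i), $(1+\Lambda_n^{\mathrm{Int}})\sqrt{n}\,e^{-\nu\sqrt{n}}\lesssim n^{2/3}e^{-\nu\sqrt{n}}$ with $\nu=\sqrt{2\tau\rho}$;
\item for (ii), $n^{1/6}e^{-\nu n/\log n}$;
\item for (iii), $n^{1/6}e^{-\nu n^{2/3}}$.
\end{itemize}
In each case the exponent is unchanged from the projection bound and only the algebraic prefactor grows by $n^{1/6}$.

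The main obstacle is the Lebesgue constant bound, in particular controlling $\sup_x$ over $x$ near the endpoints of $I$, which correspond to $|s|\to\infty$. There one must check that $|\psi_{n+1}(s)|/|s-s_j|$ remains controlled beyond the largest zero. Gaussian decay of $\psi_{n+1}$ past the turning point should handle this, but the transition region is delicate. I would first try to quote the known sharp estimate $\Lambda_n\sim n^{1/6}$ for Hermite-weighted interpolation at Hermite zeros, and fall back on the Plancherel--Rotach/Airy asymptotics only if a citation is unavailable.
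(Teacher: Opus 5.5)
Your proposal follows the paper's proof. After the substitution $s=\lambda\varphi(x)$, $\Lambda_n^{\mathrm{Int}}$ becomes the $\lambda$- and $\phi$-independent weighted Lebesgue constant for interpolation at the zeros of $\psi_{n+1}$, which is $O(n^{1/6})$ by a known result (the paper quotes Lubinsky's survey, Theorem~11.8); Lemma~\ref{lem:ScaInt} combined with Theorems~\ref{thm:ScaledSEConv} and \ref{thm:ScaledDE} then gives (i)--(iii), and you are right that the left-hand sides of (ii)--(iii) should be the interpolation errors.

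One correction to your fallback sketch. Since $\psi'_{n+1}(s_j)=\sqrt{2(n+1)}\,\psi_n(s_j)$, Christoffel--Darboux gives $\sum_{k=0}^{n}\psi_k(s_j)^2=\tfrac12\psi'_{n+1}(s_j)^2\asymp(\Delta s_j)^{-1}$, so $|\psi'_{n+1}(s_j)|^{-1}\asymp(\Delta s_j)^{1/2}\asymp(2n-s_j^2)^{-1/4}$, without the extra factor $n^{-1/4}$ you wrote. This corrected asymptotic is what makes the far-node sum genuinely harmonic, $\sum_{j\neq k}|j-k|^{-1}=O(\log n)$.
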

\begin{proof}
Let $x=\phi(s)$ with $s\in(-\infty,\infty)$, then
\[
L_j(x) = \frac{\psi_{n+1}(\lambda \varphi(x))}{\lambda (\varphi(x) - \varphi(x_j)) \psi'_{n+1}(s_j) } = \frac{\psi_{n+1}(\lambda s)}{(\lambda s - s_j) \psi'_{n+1}(s_j) }
\]
and thus
\begin{align}
\Lambda_n^{\mathrm{Int}} = \sup_{s\in \mathbb{R}} \sum_{j=0}^{n} \left| \frac{\psi_{n+1}(\lambda s)}{(\lambda s - s_j) \psi'_{n+1}(s_j) } \right| = \sup_{s\in \mathbb{R}} \left( w(\lambda s) \sum_{j=0}^{n} |\ell_j(\lambda s)| w^{-1}(s_j) \right), \nonumber
\end{align}
where $w(s)=e^{-s^2/2}$ and $\ell_j(s)$ are fundamental polynomials associated with $\{s_j\}_{j=0}^{n}$.
By \cite[Theorem~11.8]{Lubinsky2007} we know that $\Lambda_n^{\mathrm{Int}}=O(n^{1/6})$ as $n\rightarrow\infty$. The desired result \eqref{eq:ScaIntSEConv} follows immediately by combining Lemma \ref{lem:ScaInt} with Theorem \ref{thm:ScaledSEConv} and the desired results \eqref{eq:ScaIntDEConv} and \eqref{eq:ScaIntEFConv} follow immediately by combining Lemma \ref{lem:ScaInt} with Theorem \ref{thm:ScaledDE}.
\end{proof}

\begin{figure}[htbp]
\centering
\includegraphics[width=0.49\textwidth,height=0.42\textwidth]{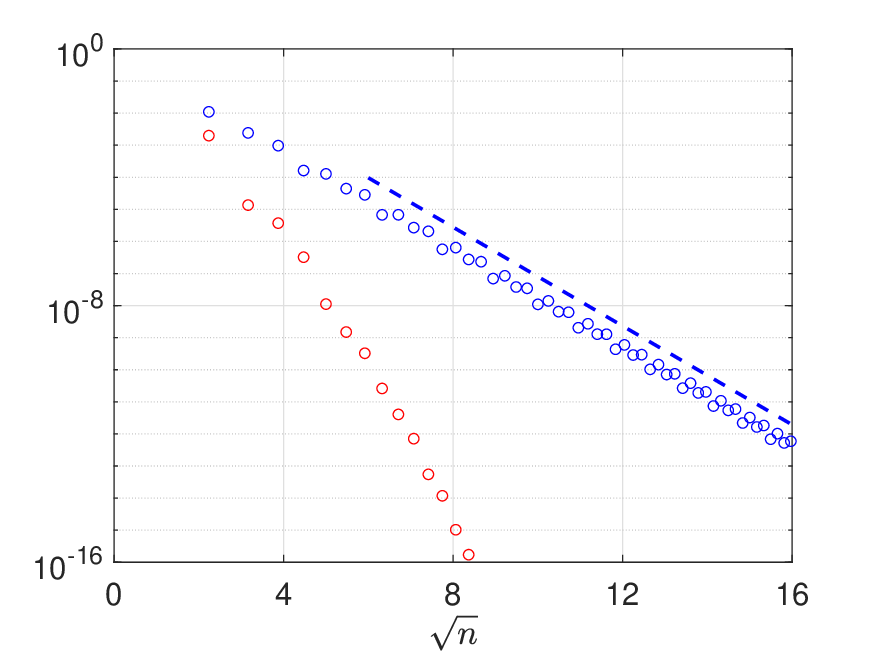}
\includegraphics[width=0.49\textwidth,height=0.42\textwidth]{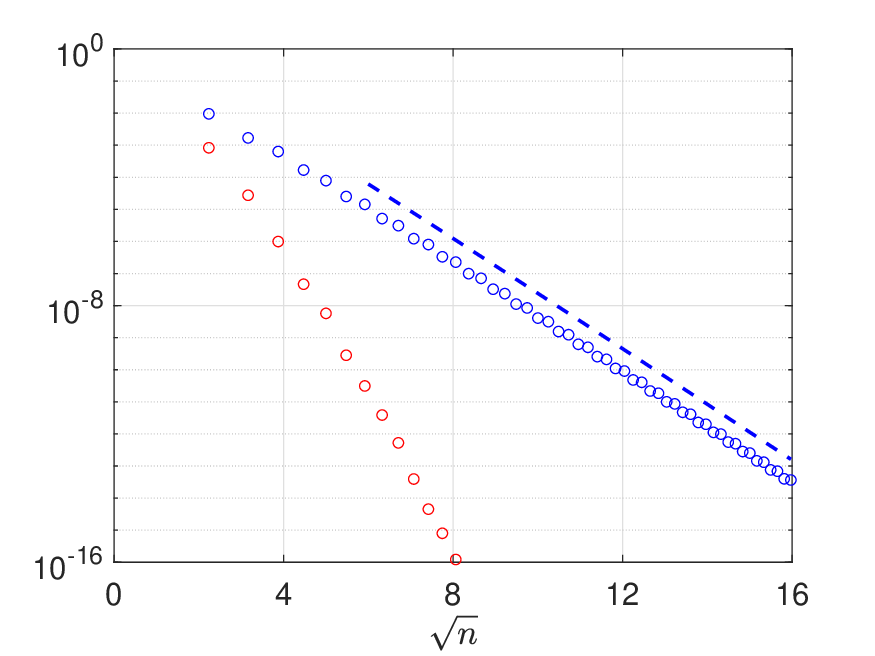}
\caption{Maximum errors of $I_{n,\lambda}^{\mathrm{SE}}f$ (blue) and $I_{n,\lambda}^{\mathrm{DE}}f$ (red) as a function of $\sqrt{n}$ for $f(x)=x^{\alpha}(1-x)^{\beta}$ with $(\alpha,\beta)=(1/2,1/2)$ (left) and $(\alpha,\beta)=(2/3,1)$ (right). The scaling factor of $I_{n,\lambda}^{\mathrm{SE}}f$ is $\lambda=\sqrt{\tau/\pi}$ with $\tau=\min\{\alpha,\beta\}$ and the scaling factor of $I_{n,\lambda}^{\mathrm{DE}}f$ is $\lambda=1.4\sqrt{n}/\log n$. The dashed lines show the rate $O(n^{\kappa}\exp(-\nu\sqrt{n}))$ with $\nu=\sqrt{2\tau\pi}$ and $\kappa=0$ (left) and $\kappa=1/3$ (right).}\label{fig:Exam8}
\end{figure}

Below we illustrate an example to show the performance of the SE- and DE-Hermite interpolation methods. In Figure \ref{fig:Exam8} we plot the maximum errors of $I_{n,\lambda}^{\mathrm{SE}}f$ and $I_{n,\lambda}^{\mathrm{DE}}f$ as a function of $\sqrt{n}$ for the function $f(x)=x^{\alpha}(1-x)^{\beta}$ with $(\alpha,\beta)=(1/2,1/2)$ and $(\alpha,\beta)=(2/3,1)$. The scaling factor of $I_{n,\lambda}^{\mathrm{SE}}f$ is chosen as $\lambda=\sqrt{\tau/\pi}$ with $\tau=\min\{\alpha,\beta\}$ and the scaling factor of $I_{n,\lambda}^{\mathrm{DE}}f$ is chosen as $\lambda=1.4\sqrt{n}/\log n$. We see from Figure \ref{fig:Exam8} that $I_{n,\lambda}^{\mathrm{SE}}f$ converges, up to some small algebraic factors, at the rate $O(\exp(-\nu\sqrt{n}))$ with $\nu=\sqrt{2\tau\pi}$ and $I_{n,\lambda}^{\mathrm{DE}}f$ converges at some much faster rates. Finally, comparing Figure \ref{fig:Exam8} with Figure \ref{fig:Exam3}, we see that the accuracy of $I_{n,\lambda}^{\mathrm{SE}}f$ is very close to that of $\Pi_{n,\lambda}^{\mathrm{SE}}f$. Similarly, the accuracy of $I_{n,\lambda}^{\mathrm{DE}}f$ is also very close to that of $\Pi_{n,\lambda}^{\mathrm{DE}}f$ (not displayed here).

Before closing this section, we point out an interesting observation on the distribution of the interpolation points $\{x_j\}_{j=0}^{n}$. In Figure \ref{fig:Point} we plot the points $\{x_j\}_{j=0}^{n}$ as a function of $\sqrt{j}$ for $\phi=\phi_{\mathrm{SE}}$ and $\phi=\phi_{\mathrm{DE}}$ and $n=36$ and $I=(0,1)$. We choose $\lambda=1/\sqrt{2\pi}$ for $\phi=\phi_{\mathrm{SE}}$ and $\lambda=1.5\sqrt{n}/\log n$ for $\phi=\phi_{\mathrm{DE}}$. For those points near the left endpoint $x=0$, we see that the curve of $\phi=\phi_{\mathrm{SE}}$ is close to a parabola and the curve of $\phi=\phi_{\mathrm{DE}}$ is close to a straight line on a semi-log scale, and thus they exhibit uniform exponential clustering for $\phi=\phi_{\mathrm{SE}}$ and tapered exponential clustering for $\phi=\phi_{\mathrm{DE}}$. This observation is similar to the observation in \cite[Figure~13]{Trefethen2021} on the distances of the nodes of SE-Sinc and DE-Sinc quadrature from the left endpoint $x=-1$.

\begin{figure}[htbp]
\centering
\includegraphics[width=0.55\textwidth,height=0.45\textwidth]{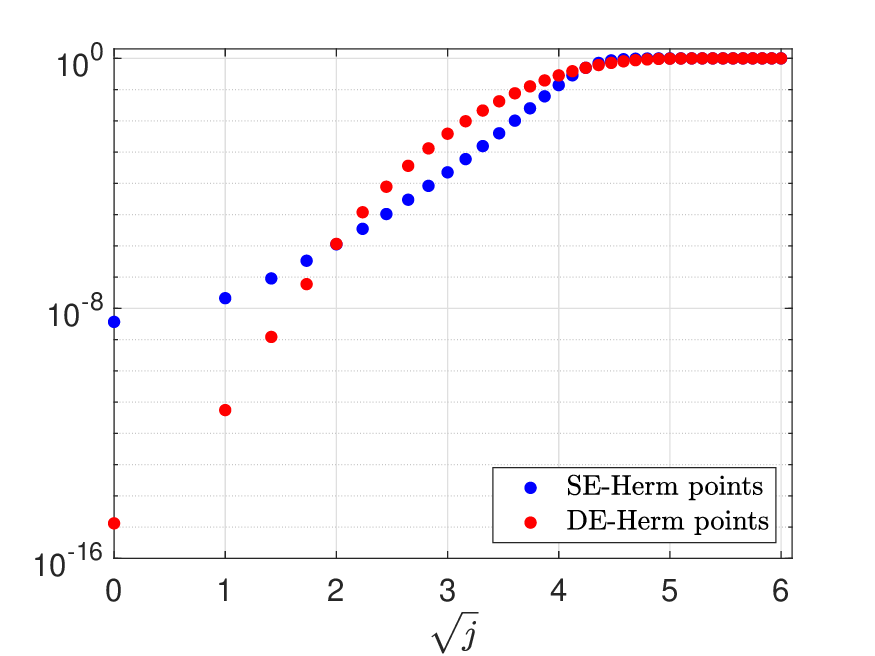}
\caption{The interpolation points $\{x_j\}_{j=0}^{n}$ as a function of $\sqrt{j}$ for $\phi=\phi_{\mathrm{SE}}$ (blue) and $\phi=\phi_{\mathrm{DE}}$ (red) and $n=36$. The scaling factor is chosen as $\lambda=1/\sqrt{2\pi}$ for $\phi=\phi_{\mathrm{SE}}$ and $\lambda=1.5\sqrt{n}/\log n$ for $\phi=\phi_{\mathrm{DE}}$.}\label{fig:Point}
\end{figure}

\section{Extensions}\label{sec:Exten}
In this section we present two extensions of SE- and DE-Hermite approximations, including SE- and DE-Hermite quadrature and the rootfinding algorithm for computing the roots and extrema of functions with endpoint singularities.

\subsection{SE- and DE-Hermite quadrature}
Consider the computation of integral
\begin{equation}\label{eq:Int}
I(f) = \int_{a}^{b} f(x) dx,
\end{equation}
where $f(x)$ is either smooth or has one or two endpoint singularities. SE- and DE-Sinc quadrature for the above integral has been extensively studied and the idea is to apply the SE and DE transforms to the above integral and then approximate the resulting integral by trapezoidal rule, i.e.,
\begin{equation}
I(f) = \int_{-\infty}^{\infty} f(\phi(s)) \phi'(s) ds \approx h \sum_{k=-M}^{N} f(\phi(kh)) \phi'(kh),
\end{equation}
where $h>0$ is the step size. The sum on the right-hand side is called SE-Sinc quadrature when $\phi=\phi_{\mathrm{SE}}$ and DE-Sinc quadrature when $\phi=\phi_{\mathrm{DE}}$. We refer to \cite{Okayama2013,Stenger1993,Takahasi1973,Takahasi1974} for more details.

Here we develop SE- and DE-Hermite quadrature formulas for the integral \eqref{eq:Int}. Let $g(x)=f(x)/\varphi'(x)$, using the interpolation approximation \eqref{eq:ScaInt} to $g(x)$, we define an interpolatory quadrature as
\begin{equation}\label{eq:HermQuad}
I(f) = \int_{a}^{b} g(x) \varphi'(x) dx
\approx \int_{a}^{b} (I_{n,\lambda}g)(x) \varphi'(x) dx  = \sum_{j=0}^{n} w_j f(x_j) := Q_{n,\lambda}(f).
\end{equation}
By \eqref{eq:ScaIntLag} and using the change of variable $s=\varphi(x)$, the weights are given by
\begin{align}
w_j &= \frac{1}{\varphi'(x_j)} \int_{a}^{b} L_j(x) \varphi'(x) dx = \frac{1}{\varphi'(x_j) \psi'_{n+1}(s_j)} \int_{-\infty}^{\infty} \frac{\psi_{n+1}(\lambda s)}{\lambda s - s_j} ds  \nonumber \\
&= \frac{1}{\varphi'(x_j) \psi'_{n+1}(s_j) \psi_n(s_j)} \sqrt{\frac{2}{n+1}} \sum_{k=0}^{n} \psi_k(s_j) \int_{-\infty}^{\infty} \psi_k(\lambda s) ds,
\end{align}
and the integrals in the last step are known in closed form. Note that $Q_{n,\lambda}(f)$ is constructed by approximating $g$ with $I_{n,\lambda}g$, one would expect that the convergence rate of $Q_{n,\lambda}(f)$ should be the same as that of $I_{n,\lambda}g$.

Below we denote $Q_{n,\lambda}(f)$ by $Q_{n,\lambda}^{\mathrm{SE}}(f)$ when $\phi=\phi_{\mathrm{SE}}$ and by $Q_{n,\lambda}^{\mathrm{DE}}(f)$ when $\phi=\phi_{\mathrm{DE}}$. In Figure \ref{fig:Exam9} we plot their errors for $f(x)=x^{\alpha}\log^{\beta}(x) e^{x}$ with $(\alpha,\beta)=(-1/5,2)$ and $(\alpha,\beta)=(-1/3,1)$ and the interval is $I=(0,1)$. Note that $g(x)=O((x-a)^{\alpha+1})$ as $x\rightarrow a^+$ and $g(x)=O((b-x)^{\beta+1})$ as $x\rightarrow b^-$ for both SE and DE transforms, and thus we choose $\lambda=\sqrt{\tau/\pi}$ with $\tau=\min\{\alpha,\beta\}+1$ in $Q_{n,\lambda}^{\mathrm{SE}}(f)$. As for $Q_{n,\lambda}^{\mathrm{DE}}(f)$, we choose $\lambda=1.5\sqrt{n}/\log n$. As expected, we see that $Q_{n,\lambda}^{\mathrm{SE}}(f)$ converges, up to some small algebraic factors, at the rate $O(\exp(-\nu\sqrt{n}))$ with $\nu=\sqrt{2\tau\pi}$ and $Q_{n,\lambda}^{\mathrm{DE}}(f)$ converges at a much faster rate.

\begin{figure}[htbp]
\centering
\includegraphics[width=0.49\textwidth,height=0.42\textwidth]{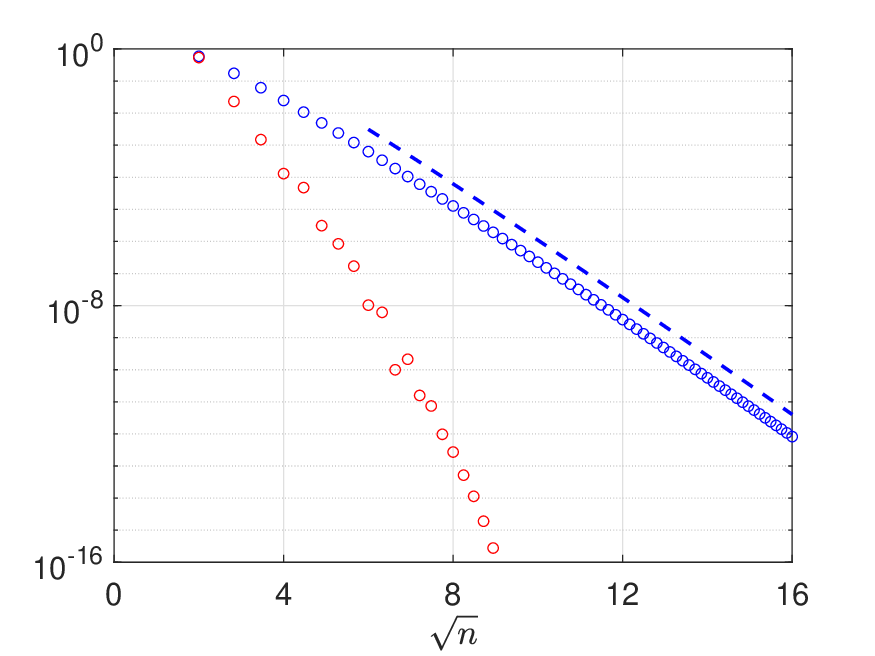}
\includegraphics[width=0.49\textwidth,height=0.42\textwidth]{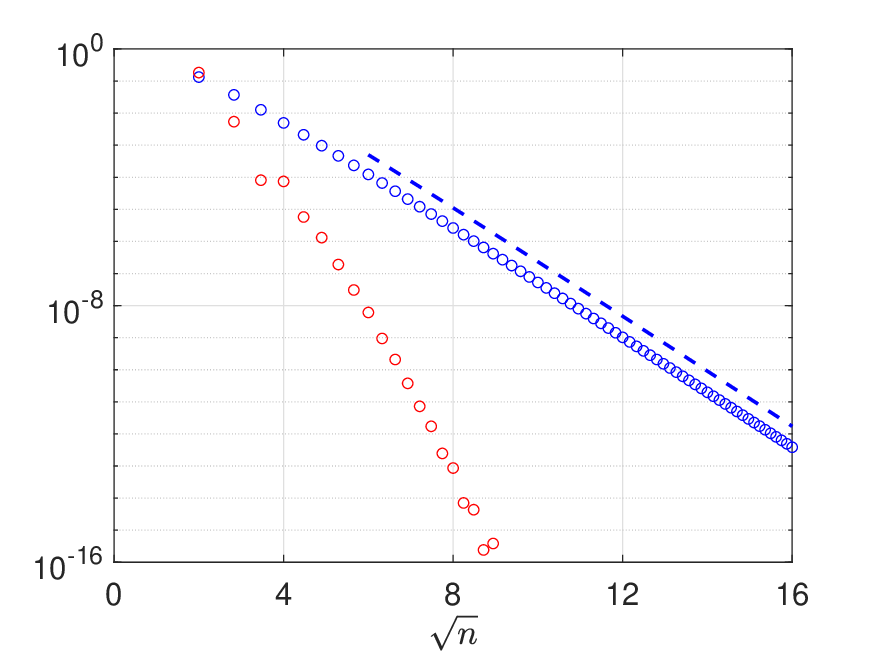}
\caption{Absolute errors of $Q_{n,\lambda}^{\mathrm{SE}}(f)$ (blue) and $Q_{n,\lambda}^{\mathrm{DE}}(f)$ (red) as a function of $\sqrt{n}$ for $f(x)=x^{\alpha}\log^{\beta}(x) e^{x}$ and $(\alpha,\beta)=(-1/5,2)$ (left) and $(\alpha,\beta)=(-1/3,1)$ (right). The scaling factor of $Q_{n,\lambda}^{\mathrm{SE}}(f)$ is $\lambda=\sqrt{\tau/\pi}$ with $\tau=\min\{\alpha,\beta\}+1$ and the scaling factor of $Q_{n,\lambda}^{\mathrm{DE}}(f)$ is $\lambda=1.5\sqrt{n}/\log n$. The dashed lines show the rate $O(n^{\kappa}\exp(-\nu\sqrt{n}))$ with $\nu=\sqrt{2\tau\pi}$ and $\kappa=1$ (left) and $\kappa=1/2$ (right).}\label{fig:Exam9}
\end{figure}

\subsection{SE- and DE-Hermite rootfinding algorithm}
Chebyshev rootfinding algorithm is a stable and efficient numerical method for computing the roots and extrema \cite[Chapter~18]{Trefethen2019}. The performance of this algorithm depends on the accuracy of Chebyshev spectral approximation and differentiation and thus the convergence rate will be slow when the function has singularities (see, e.g., \cite{Boyd2013,Wang2026b}).

Below we consider the rootfinding algorithm using SE- and DE-Hermite approximations. First, we approximate $f(x)$ by Hermite approximation of the form
\begin{equation}
f(x) \approx p(x) = \sum_{k=0}^{n} a_k \Psi_{k,\lambda}(x),
\end{equation}
and $p$ is derived either by projection \eqref{def:ScaledHermProj} or interpolation \eqref{eq:ScaIntLag}. By the recurrence relation of $\{\psi_k\}$, it is easy to show that if $p(x)=0$, then $\lambda\varphi(x)$ is the eigenvalue of the following companion matrix
\begin{align}
A = \left(
             \begin{array}{cccc}
                 0 & \sqrt{\frac{1}{2}} &    &   \\
               \sqrt{\frac{1}{2}} & \ddots & \ddots &   \\
                 & \ddots & \ddots & \sqrt{\frac{n-1}{2}} \\
                 &  & \sqrt{\frac{n-1}{2}} & 0 \\
             \end{array}
           \right) - \sqrt{\frac{n}{2}} \frac{1}{a_n}
           \left(
             \begin{array}{cccc}
                 &   &   &   \\
                 &   &   &   \\
                 &   &   &   \\
               a_0 & a_1 & \cdots & a_{n-1} \\
             \end{array}
           \right),
\end{align}
and the entries not displayed are all zero. Thus, all roots of $p(x)$ in the interval $I$ can be evaluated from the eigenvalues of the above matrix. Similarly, by the derivative relation $\psi'_n(x) = \sqrt{n/2} \psi_{n-1}(x) - \sqrt{(n+1)/2} \psi_{n+1}(x)$, we have
\begin{equation}
p'(x) = \lambda\varphi'(x) \sum_{k=0}^{n+1} b_k \Psi_{k,\lambda}(x),
\end{equation}
where
\begin{align}
b_k &= \sqrt{\frac{k+1}{2}} a_{k+1} - \sqrt{\frac{k}{2}} a_{k-1} , \quad k=0,\ldots,n+1, \nonumber
\end{align}
and $a_{n+1}=a_{n+2}=0$. For both SE and DE transforms, i.e., $\varphi=\varphi_{\mathrm{SE}}$ and $\varphi=\varphi_{\mathrm{DE}}$, it is easily verified that $\varphi'(x)>0$ for $x\in I$, and thus the root of $p'(x)$ can also be evaluated by using the above algorithm in a similar way.

Below we show the performance of SE- and DE-Hermite rootfinding algorithm. Consider the function
\begin{equation}\label{eq:ExamZero}
f(x) = x^{1/2}(1-x)^{1/2}\left(x-\frac{1}{4}\right)\left(x-\frac{1}{2}\right)\left(x-\frac{3}{4}\right), \quad
\end{equation}
which has three roots and four extrema in $I=(0,1)$. In Figure \ref{fig:Exam10} we plot the maximum errors of the above algorithm for computing the three roots and four extrema of the function \eqref{eq:ExamZero}. We choose $\lambda=1/\sqrt{2\pi}$ for $\varphi=\varphi_{\mathrm{SE}}$ and $\lambda=1.6\sqrt{n}/\log n$ for $\varphi=\varphi_{\mathrm{DE}}$. As expected, the algorithm with $\varphi=\varphi_{\mathrm{SE}}$ converges, up to an algebraic factor, at the rate $O(\exp(-\nu\sqrt{n}))$ with $\nu=\sqrt{\pi}$ and the algorithm with $\varphi=\varphi_{\mathrm{DE}}$ converges at a much faster rate.

\begin{figure}[htbp]
\centering
\includegraphics[width=0.49\textwidth,height=0.42\textwidth]{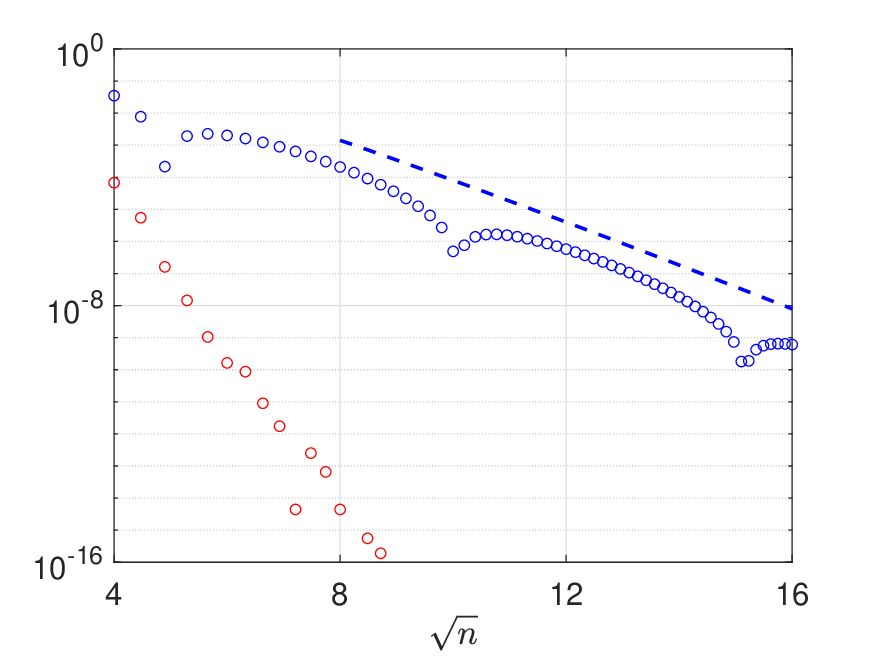}
\includegraphics[width=0.49\textwidth,height=0.42\textwidth]{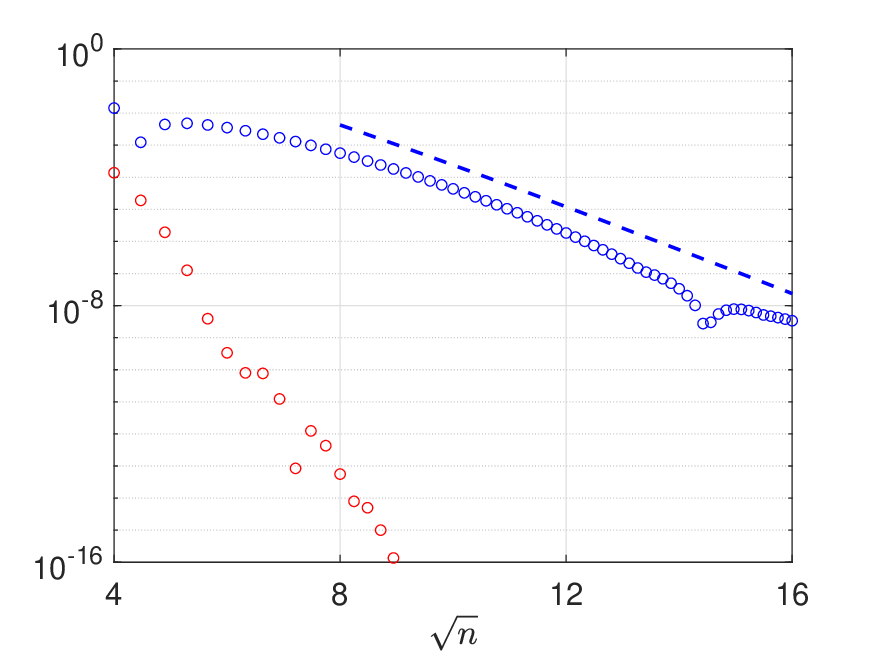}
\caption{Maximum errors of the SE-Hermite (blue) and DE-Hermite (red) rootfinding algorithm for the three roots (left) and the four extrema (right) as a function of $\sqrt{n}$. The scaling factor is $\lambda=1/\sqrt{2\pi}$ for $\varphi=\varphi_{\mathrm{SE}}$ and $\lambda=1.6\sqrt{n}/\log n$ for $\varphi=\varphi_{\mathrm{DE}}$. The dashed lines show the rate $O(n^{\kappa}\exp(-\nu\sqrt{n}))$ with $\nu=\sqrt{\pi}$ and $\kappa=3/2$. }\label{fig:Exam10}
\end{figure}

\section{Concluding remarks}\label{sec:conclusion}
In this work we have introduced Hermite spectral approximation for functions with endpoint singularities using exponential transforms and presented a comprehensive convergence analysis of such approximations without and with scalings. In the case with scalings, we have derived optimal scaling factors and showed that SE- and DE-Hermite approximations have comparable or superior accuracy performance to their sinc counterparts when using the same number of terms.

Finally, we list several issues for future research: 
\begin{itemize}
\item Quantifying constant factor in the scaling factors of DE- and EF-Hermite approximations.

\item Developing SE- and DE-Hermite spectral methods for Fredholm and Volterra integral equations with singular kernels.
\end{itemize}
We will address these issues in the future.



\appendix

\section{Proof of Theorem \ref{thm:ScaledHermInf}}\label{app:A}

\begin{proof}
We follow the steps as the proof of \cite[Theorem 2.4]{Hu2026}. Let $G(x) = e^{-x^2/2}/\sqrt{2\pi}$ and $h_M(x) = \mathbb{I}_{[-2M,2M]}$ and $T_M(x) = (h_M\ast G)(x)$. It is easily verified that $0\leq T_M(x)\leq1$ for $x\in(-\infty,\infty)$ and
\[
0\leq 1 - T_M(x) \leq \sqrt{\frac{2}{\pi}} \frac{e^{-M^2/2}}{M} , \quad x\in[-M,M].
\]
We first consider the error of $\Pi_{n,1}^{\mathrm{SH}}$ in $L^{\infty}$-norm. Let $u_{M}(x) = u(x)T_{M}(x)$, we have
\begin{align}\label{eq:AppendEqn1}
\|u - \Pi_{n,1}^{\mathrm{SH}} u\|_{L^{\infty}(\mathbb{R})} &\leq \|u - u_{M}\|_{L^{\infty}(\mathbb{R})} + \|u_{M} - \Pi_{n,1}^{\mathrm{SH}} u_{M}\|_{L^{\infty}(\mathbb{R})} + \|\Pi_{n,1}^{\mathrm{SH}} u_{M} - \Pi_{n,1}^{\mathrm{SH}} u\|_{L^{\infty}(\mathbb{R})}  \nonumber \\[1ex]
&\leq (1+ \Lambda_n) \|u - u_{M}\|_{L^{\infty}(\mathbb{R})} + \|u_{M} - \Pi_{n,1}^{\mathrm{SH}} u_{M}\|_{L^{\infty}(\mathbb{R})},
\end{align}
where $\Lambda_n$ is the Lebesgue constant of the operator $\Pi_{n,1}^{\mathrm{SH}}$ in $L^{\infty}$-norm. For the first term in the last inequality, using the properties of $T_M(x)$ we have
\begin{align}\label{eq:AppendEqn11}
\|u - u_M\|_{L^{\infty}(\mathbb{R})} &\leq \| u (1 - T_M) \cdot \mathbb{I}_{\{|x|<{M}\}} \|_{L^{\infty}(\mathbb{R})} + \| u (1 - T_{M}) \cdot \mathbb{I}_{\{|x|\geq M\}} \|_{L^{\infty}(\mathbb{R})} \nonumber \\
&\leq \sqrt{\frac{2}{\pi}} \frac{e^{-M^2/2}}{M} \|u\|_{L^{\infty}(\mathbb{R})} + \| u \cdot \mathbb{I}_{\{|x|\geq M\}} \|_{L^{\infty}(\mathbb{R})}.
\end{align}
Now we consider the second term. Let $\mathcal{B}_{N}$ denote the restriction operator in frequency domain of the form
\[
(\mathcal{B}_{N}u)(x) = \frac{1}{\sqrt{2\pi}} \int_{-N}^{N} \hat{u}(\xi) e^{\mathrm{i}\xi x} d\xi = \mathcal{F}^{-1}[\hat{u}(\xi) \cdot \mathbb{I}_{\{|\xi|\leq N\}}](x),
\]
and let $u_N(x)=(\mathcal{B}_{N}u)(x)T_M(x)$. Similar to \eqref{eq:AppendEqn1}, we have
\begin{align}\label{eq:AppendEqn2}
\|u_M - \Pi_{n,1}^{\mathrm{SH}} u_M\|_{L^{\infty}(\mathbb{R})} &\leq \|u_M - u_N\|_{L^{\infty}(\mathbb{R})} + \|u_N - \Pi_{n,1}^{\mathrm{SH}} u_N\|_{L^{\infty}(\mathbb{R})} \nonumber \\[1ex]
& + \|\Pi_{n,1}^{\mathrm{SH}} u_N - \Pi_{n,1}^{\mathrm{SH}} u_M \|_{L^{\infty}(\mathbb{R})} \nonumber \\[1ex]
&\leq (1 + \Lambda_n) \|u_M - u_N\|_{L^{\infty}(\mathbb{R})} + \|u_N - \Pi_{n,1}^{\mathrm{SH}} u_N\|_{L^{\infty}(\mathbb{R})}.
\end{align}
For the first term in the last inequality of \eqref{eq:AppendEqn2}, we have
\begin{align}
\|u_M - u_N\|_{L^{\infty}(\mathbb{R})} &= \|(u - \mathcal{B}_{N}u) \cdot T_M\|_{L^{\infty}(\mathbb{R})} \leq \|u - \mathcal{B}_{N}u \|_{L^{\infty}(\mathbb{R})}. \nonumber
\end{align}
Furthermore, by the Fourier inversion formula,
\begin{align}
(u - \mathcal{B}_{N}u)(x) &= \frac{1}{\sqrt{2\pi}} \int_{-\infty}^{\infty} \hat{u}(\xi) e^{\mathrm{i}\xi x} d\xi - \frac{1}{\sqrt{2\pi}} \int_{-N}^{N} \hat{u}(\xi) e^{\mathrm{i}\xi x} d\xi = \frac{1}{\sqrt{2\pi}} \int_{|\xi|\geq N} \hat{u}(\xi) e^{\mathrm{i}\xi x} d\xi,  \nonumber
\end{align}
it follows
\begin{align}\label{eq:AppendEqn3}
\|u - \mathcal{B}_{N}u \|_{L^{\infty}(\mathbb{R})} &\leq \frac{1}{\sqrt{2\pi}} \int_{|\xi|\geq N} \left| \hat{u}(\xi) \right| d\xi = \frac{1}{\sqrt{2\pi}} \|\hat{u}(\xi)\cdot \mathbb{I}_{\{|\xi|\geq N\}} \|_{L^1(\mathbb{R})} .
\end{align}
Next we consider the second term in the last inequality of \eqref{eq:AppendEqn2}. Note that
\begin{align}
u_N(x) = \frac{1}{2\pi} \left( \int_{-N}^{N} \hat{u}(\xi) e^{\mathrm{i}\xi x} d\xi \right) \left( \int_{-2M}^{2M} e^{-(x-y)^2/2} dy \right) = \frac{1}{2\pi} \int_{\Omega} \hat{u}(\xi) e^{\mathrm{i}\xi x -(x-y)^2/2} dy  d\xi, \nonumber
\end{align}
where $\Omega=\{(\xi,y):~|\xi|\leq N, ~|y|\leq 2M \}$, and by \cite[Equation~(2.27)]{Hu2026},
\begin{align}
u_N(x) &= \sum_{k=0}^{\infty} \left( \frac{1}{2\pi} \int_{\Omega} \hat{u}(\xi) c_k(\xi,y) dy d\xi \right) \psi_k(x),  \nonumber
\end{align}
where
\[
c_k(\xi,y) = \pi^{1/4} e^{-y^2/2-(\xi-\mathrm{i}y)^2/4 } \frac{(y+\mathrm{i}\xi)^k}{\sqrt{2^k k!}}.
\]
Therefore, recalling that $\|\psi_k\|_{L^{\infty}(\mathbb{R})}\leq1/\pi^{1/4}$ for any $k\geq0$,
\begin{align}\label{eq:AppendEqn4}
\|u_N - \Pi_{n,1}^{\mathrm{SH}} u_N\|_{L^{\infty}(\mathbb{R})} &\lesssim \sum_{k=n+1}^{\infty}  \left| \int_{\Omega} \hat{u}(\xi) c_k(\xi,y) dy  d\xi \right|  \nonumber \\
&\lesssim \|\hat{u}\|_{L^{\infty}(\mathbb{R})} \sum_{k=n+1}^{\infty} \int_{\Omega} \left| c_k(\xi,y) \right| dy d\xi .
\end{align}
Let $D=\{(\xi,y): \xi^2+y^2\leq R^2\}$ with $R=\sqrt{4M^2+N^2}=\sqrt{(4\sigma^2+\eta^2)n}$, it is easily seen that $\Omega\subseteq D$. For $k\geq n+1$ and $4\sigma^2+\eta^2\in(0,2)$, it is straightforward to check that $e^{-x^2/4} x^k \leq e^{-R^2/4} R^k $ for $x\in[0,R]$, we have for $(\xi,y)\in\Omega\subseteq D$ that
\begin{align}
\left| c_k(\xi,y) \right| = \frac{\pi^{1/4} (y^2+\xi^2)^{k/2} }{\sqrt{2^k k!}} e^{-(y^2+\xi^2)/4}  \leq \frac{\pi^{1/4} R^{k} }{\sqrt{2^k k!}} e^{-R^2/4},  \nonumber
\end{align}
and thus,
\begin{align}
\sum_{k=n+1}^{\infty} \int_{\Omega} \left| c_k(\xi,y) \right| dy d\xi
&\lesssim n e^{-R^2/4} \sum_{k=n+1}^{\infty} \frac{R^{k} }{\sqrt{2^k k!}}. \nonumber
\end{align}
By Stirling formula, we know that
\begin{align}
\sum_{k=n+1}^{\infty} \frac{R^{k}}{\sqrt{2^k k!}} \lesssim \frac{1}{n^{1/4}} \sum_{k=n+1}^{\infty} \left( \frac{e R^2}{2k} \right)^{k/2} &\lesssim \frac{1}{n^{1/4}} \left( \frac{e R^2}{2n} \right)^{n/2}. \nonumber
\end{align}
Hence,
\begin{align}
\sum_{k=N+1}^{\infty} \int_{\Omega} \left| c_k(\xi,y) \right| dy d\xi
&\lesssim n^{3/4} e^{-R^2/4} \left( \frac{e R^2}{2n} \right)^{n/2} .   \nonumber
\end{align}
Combing the above estimate with \eqref{eq:AppendEqn4}, \eqref{eq:AppendEqn3}, \eqref{eq:AppendEqn2}, \eqref{eq:AppendEqn11} and \eqref{eq:AppendEqn1} yields
\begin{align}
\|u - \Pi_{n,1}^{\mathrm{SH}} u\|_{L^{\infty}(\mathbb{R})}  &\lesssim (1 + \Lambda_n) \left( \| u \cdot \mathbb{I}_{\{|x|\geq M\}} \|_{L^{\infty}(\mathbb{R})} + \|\hat{u} \cdot \mathbb{I}_{\{|\xi|\geq N\}} \|_{L^1(\mathbb{R})} \right) \nonumber \\
& + (1 + \Lambda_n) \frac{e^{-M^2}}{M} \|u\|_{L^{\infty}(\mathbb{R})} + n^{3/4} e^{-R^2/4} \left( \frac{e R^2}{2n} \right)^{n/2} \|\hat{u}\|_{L^{\infty}(\mathbb{R})}. \nonumber
\end{align}
Finally, for the approximation $\Pi_{n,\lambda}^{\mathrm{SH}} u$, note that
\begin{align}
\|u - \Pi_{n,\lambda}^{\mathrm{SH}} u\|_{L^{\infty}(\mathbb{R})} = \|\mathcal{A}_{\lambda}u - \Pi_{n,1}^{\mathrm{SH}}(\mathcal{A}_{\lambda}u)\|_{L^{\infty}(\mathbb{R})},  \nonumber
\end{align}
where $\mathcal{A}_{\lambda}$ is the scaling operator defined in the proof of Theorem \ref{thm:ScaledSEConv}.
The desired result follows immediately. This ends the proof.
\end{proof}

\end{document}